\documentclass{article}
\usepackage[margin=1in]{geometry}
\usepackage{amsmath, amssymb}
\usepackage{amsthm}
\usepackage{amsmath}
\usepackage{bm}
\usepackage[hidelinks]{hyperref}
\usepackage{algorithm}
\usepackage{algpseudocode}
\usepackage{setspace}
\usepackage{graphicx}
\usepackage{subfig}
\newcommand{\bu}{{\bf u}}

\newcommand{\bW}{{\bf W}}

\newcommand{\bH}{\mathbf{H}}

\newcommand{\bU}{{\bf U}}
\newcommand{\bR}{{\bf R}}
\newcommand{\bQ}{{\bf Q}}
\newcommand{\bP}{{\bf P}}
\newcommand{\bZ}{{\bf Z}}
\newcommand{\bS}{{\bf S}}
\newcommand{\bg}{{\bf g}}
\newcommand{\bh}{{\bf h}}
\newcommand{\bV}{{\bf V}}
\newcommand{\x}{{\bf x}}

\newcommand{\R}{\mathbb{R}}
\newcommand{\Div}{{\rm div}}

\newcommand{\bff}{\mathbf{f}}

\newcommand{\bGamma}{\bm{\Gamma}}

\newtheorem{Theorem}{Theorem}
\newtheorem{Proposition}{Proposition}
\newtheorem{Corollary}{Corollary}
\newtheorem{Problem}{Problem}
\newtheorem{Lemma}{Lemma}
\newtheorem{Remark}{Remark}

\numberwithin{equation}{section}

\title{A Carleman contraction method for inverse initial data recovery in the Navier--Stokes equations with unknown body force}
\date{}

 \author{
       Phuong M. Nguyen\thanks{Department of Mathematics and Statistics, University of North Carolina at Charlotte, Charlotte, NC, 28223, USA, \texttt{pnguye45@charlotte.edu}, corresponding author.}  
     \and
        Loc H. Nguyen\thanks{Department of Mathematics and Statistics, University of North Carolina at Charlotte, Charlotte, NC, 28223, USA,  \texttt{loc.nguyen@charlotte.edu}.}
    }

\begin{document}

\maketitle

\begin{abstract}
We solve an inverse initial data problem for the incompressible Navier--Stokes system. The objective is to recover the initial velocity and pressure from lateral boundary observations, without assuming that the time-independent body force is known.
 To eliminate this unknown force, we differentiate the momentum equation with respect to time and then apply a Legendre polynomial--exponential time-dimensional reduction. This procedure yields a coupled system of elliptic equations for the expansion coefficients. We then construct a contractive map for this reduced system on a suitable admissible set equipped with a Carleman-weighted norm. Its fixed point yields an approximate solution of the time-dimensional reduction model, and the contraction property gives rise to a globally convergent Picard iteration. Finally, we present a numerical algorithm based on this framework and numerical experiments showing accurate reconstructions of the initial velocity and pressure from synthetic boundary data.
\end{abstract}

\noindent\textbf{Keywords:} inverse initial data; incompressible Navier--Stokes; Carleman contraction; time-dimensional reduction; numerical reconstruction.

\medskip

\noindent\textbf{AMS subject classifications:} 35R30, 35Q30, 35R25, 65N21.

\section{Introduction}

Let $d \ge 2$ denote the spatial dimension, let $\Omega \subset \R^d$ be a domain, and let $T>0$ be a final time. Let $\bu : \Omega \times (0,T) \to \R^d$ be the velocity field and $p : \Omega \times (0,T) \to \R$ be the pressure. We consider the incompressible Navier--Stokes system
\begin{equation}
\begin{cases}
\bu_t + (\bu \cdot \nabla)\bu
= -\nabla p + \mu \Delta \bu + \mathbf{f} & (\x, t) \in \Omega \times (0,T),\\
\Div \,\bu = 0 & (\x, t) \in \Omega \times (0,T),\\
\bu(\x, t) = 0 & (\x, t) \in \partial \Omega \times (0,T),\\
\bu(\x, 0) = \bu_0 & \x \in \Omega,
\end{cases}
\label{Navier-Stokes}
\end{equation}
where $\bu_0$ is the initial velocity, $\mu > 0$ is the viscosity coefficient, and $\mathbf{f} : \R^d \to \R^d$ is a time-independent body force. The first equation in \eqref{Navier-Stokes} expresses the momentum balance, while the second enforces incompressibility. We study the following nonlinear inverse problem.

\begin{Problem}[Initial data inverse problem for the Navier--Stokes system]\label{ISP}
Let $\nu$ denote the outward unit normal vector on $\partial\Omega$. Assume that $\bff$ is unknown. Given the lateral boundary data
\begin{equation}\label{data}
  \bg(\x,t):=\partial_{\nu}\bu(\x,t), 
  \qquad
  \bh(\x,t):=
  \begin{bmatrix}
  p(\x,t)\\
    \partial_{\nu} p(\x,t)
  \end{bmatrix}
  \quad \text{for } (\x,t)\in \partial \Omega \times (0,T),
\end{equation}
determine the initial data $\bu_0=\bu(\x,0)$ and $p_0=p(\x,0)$ in $\Omega$.
\end{Problem}

It is useful to clarify the practical meaning of the data in \eqref{data}. Although Problem~\ref{ISP} is formulated in terms of the normal derivatives $\partial_\nu \bu$ and $\partial_\nu p$, these quantities can be interpreted as idealized limits of finite-difference quotients computed from measurements near the boundary. For example, for $\x\in \partial\Omega$ and $\delta>0$ sufficiently small,
\[
\partial_\nu p(\x,t)\approx \frac{p(\x,t)-p(\x-\delta \nu(\x),t)}{\delta},
\]
and similarly for each component of $\bu$. Thus, the data $(\partial_\nu \bu,p,\partial_\nu p)$ on $\partial\Omega\times(0,T)$ should be viewed as a mathematical idealization of high-resolution measurements taken on the boundary and on a nearby interior layer parallel to it.
The inverse initial data problem for the incompressible Navier--Stokes system is relevant in a variety of applications, including aerospace and automotive engineering, hydraulic systems, oceanography, atmospheric modeling, and environmental fluid transport~\cite{Lions1998, Stokes1851, Tsai2018, White1991}. In such settings, direct access to the initial velocity and pressure fields is often infeasible, while accurate knowledge of these quantities is important for initializing predictive simulations and understanding the subsequent evolution of the flow. Recovering the initial state from lateral boundary measurements is therefore a meaningful inverse problem.

Since the analysis and numerical treatment of inverse problems depend on the underlying forward model, it is useful to briefly recall the status of the forward Navier--Stokes system. While the global regularity problem for the three-dimensional incompressible Navier--Stokes equations remains open, substantial progress has been made in several related settings; see, for example, \cite{Feireisl2004, Ladyzhenskaya1969, Lions1998, NovotnyStraskraba2004, Temam1979}. In particular, the existence of global weak solutions for stationary problems with homogeneous Dirichlet boundary conditions was established in \cite{Lions1998} and further developed in \cite{FrehseGojSteinhauer2005, NovotnyStraskraba2004, PlotnikovSokolowski2004, PlotnikovSokolowski2005}. Additional results include uniqueness and differentiability for small-data stationary problems \cite{PlotnikovRubanSokolowski2008}, well-posedness for coupled Navier--Stokes--Cahn--Hilliard and Boussinesq-type systems \cite{ AntontsevKhompysh2023, FangNeiGuo2025, HeWu2021}, and existence and asymptotic behavior for stationary Navier--Stokes equations in higher-dimensional half-spaces \cite{Fujii2025}. These works provide a broad analytical background for inverse problems associated with viscous fluid models.

Inverse problems for the Navier--Stokes equations have been studied in several directions, including the recovery of coefficients, geometries, and initial data from boundary or interior observations. In \cite{FouresteyMoubachir2005}, a numerical approximation scheme based on a Lagrange--Galerkin method was developed for inverse problems related to drag reduction and velocity identification. A regularized Gauss--Newton method for shape reconstruction in two-dimensional Navier--Stokes equations was proposed in \cite{YanHeMa2010}, where differentiability with respect to the boundary curve was established. Global identifiability of the viscosity in two-dimensional Stokes and Navier--Stokes equations from Cauchy force measurements was proved in \cite{LaiUhlmannWang2015}. Global uniqueness results for inverse boundary value problems for the two-dimensional Navier--Stokes equations and the Lam\'e system were obtained in \cite{ImanuvilovYamamoto2015a} by means of complex geometric optics solutions. More recently, iterative procedures for retrospective inverse problems, including a simple backward integration method, were introduced in \cite{OConnorEtAl2024}. In addition, global uniqueness for viscosity determination in Stokes and Navier--Stokes equations in both two and three dimensions was established in \cite{Liu2024}. These works demonstrate the breadth of inverse problems for fluid equations, but they primarily focus on coefficient, viscosity, or shape identification. By contrast, the present paper focuses on recovering the initial velocity and pressure from lateral boundary data when the body force is unknown.
A distinctive feature of Problem~\ref{ISP} is that the body force $\bff$ is not prescribed. In most inverse problems, all model ingredients except the target unknown are assumed known a priori. For example, the formulation in \cite{VanLeNguyen} requires prior knowledge of both the body force $\bff$ and the pressure $p$. In contrast, here neither the body force nor the pressure is assumed known in advance; rather, the goal is to recover the initial velocity and pressure $(\bu_0,p_0)$ solely from the lateral boundary measurements \eqref{data}. This formulation is closer to realistic situations, where external forcing may be unavailable, only partially observed, or contaminated by uncertainty.

From a mathematical perspective, Problem~\ref{ISP} is highly challenging. The governing system is nonlinear, the observations are restricted to the lateral boundary, and one seeks to determine interior initial data without knowing all terms in the model. A standard approach for nonlinear inverse problems is to formulate the reconstruction as an optimization problem. For instance, one may minimize an output least-squares functional measuring the discrepancy between the observed boundary data and the boundary data generated by a trial pair $(\bu_0,p_0)$, and then solve the resulting minimization problem by an adjoint-based gradient method. Because the forward operator is nonlinear, however, this cost functional is typically nonconvex and may admit multiple local minima. As a result, such methods depend strongly on the availability of a good initial guess, which is often unavailable in practice.

To overcome these difficulties, we follow a different strategy. We first eliminate the unknown body force $\bff$ by differentiating the momentum equation with respect to time. We then apply a Legendre polynomial--exponential time-dimensional reduction, inspired by the exponentially weighted Legendre basis introduced in \cite{TrongElastic} and later used in \cite{LeVanDangNguyen}. This reduction transforms the original inverse problem into a finite-dimensional coupled elliptic system for the coefficient vectors of $\bu$ and $p$. We solve the reduced system by a Carleman--Picard method, which avoids the need for a good initial guess by constructing a contraction map on a suitable admissible set and proving that the associated operator has a unique fixed point. At both the analytical and numerical levels, this yields a systematic reconstruction framework for Problem~\ref{ISP}. We also note the related work \cite{VanLeNguyen}, where a damped Picard iteration was introduced for recovering the initial velocity in an anisotropic Navier--Stokes setting. That approach, however, requires prior knowledge of both the pressure $p$ and the body force $\bff$, and a convergence proof for the damped Picard iteration was not established there.

The combination of time-dimensional reduction and the Carleman contraction principle was used in  \cite{LeNguyen:jiip2022} for an inverse initial-value problem for a quasilinear parabolic equation.
 Later, \cite{Nguyen:AVM2023} showed that this approach can be interpreted as the construction of a contraction mapping whose fixed point is the desired solution, thereby yielding global convergence of the associated Picard iteration even when the initial guess is far from the true solution. Since then, this framework has been extended to several inverse problems; see, for example, \cite{AbneyLeNguyenPeters, LeCON2023, LeKlibanov:ip2022, LeNguyenNguyenPark, NguyenKlibanov:ip2022, NguyenNguyenVu2026,  DangNguyenVu, VanLeNguyen}. However, these earlier methods are not directly applicable to the present problem due to the complex structure of the incompressible Navier--Stokes system. In particular, the nonlinear terms in the reduced system involve coupled transport-type interactions between the velocity and pressure variables, and they do not satisfy the structural assumptions imposed on the nonlinearity in \cite{LeNguyen:jiip2022, Nguyen:AVM2023}. For this reason, the main contribution of the present paper is to develop a Carleman contraction method specifically adapted to this inverse initial data problem by combining force elimination with a Legendre-polynomial exponential-time dimensional reduction.

% The main contributions of this paper are as follows.
% \begin{enumerate}
%     \item We formulate an inverse initial data problem for the incompressible Navier--Stokes system without assuming that the time-independent body force is known, with the aim of recovering both the initial velocity and the initial pressure from lateral boundary data.
%     \item We eliminate the unknown body force by differentiating the momentum equation with respect to time and derive a Legendre polynomial--exponential time-dimensional reduction, which leads to a finite-dimensional coupled elliptic system for the expansion coefficients.
%     \item We develop a Carleman contraction framework for the reduced system and prove that the associated operator is contractive on a suitable admissible set, which yields the existence and uniqueness of its fixed point.
%     \item We further prove that this fixed point is consistent with a solution of the reduced system as the regularization parameter tends to zero.
%     \item We propose a numerical reconstruction algorithm based on this framework and demonstrate its effectiveness through numerical experiments with synthetic boundary data.
% \end{enumerate}

The remainder of the paper is organized as follows. In Section~\ref{sec:LPexp_basis}, we recall the Legendre polynomial--exponential basis and establish the auxiliary properties needed for the time-dimensional reduction. In Section~\ref{sec:time_reduction_system}, we derive the reduced system for the coefficient vectors together with its Cauchy data. Section~\ref{sec:contraction} is devoted to the Carleman contraction framework, where we prove the contractive property of the reduced operator and establish the existence and uniqueness of its fixed point. In Section~\ref{sec:numerics}, we describe the reconstruction procedure, the numerical implementation, and several numerical examples illustrating the performance of the proposed method. Finally, Section~\ref{sec:conclusion} contains concluding remarks and possible directions for future work.

\section{The Legendre polynomial--exponential basis}\label{sec:LPexp_basis}

In this section, we recall the Legendre polynomial--exponential basis introduced in \cite{TrongElastic} and collect the two properties needed later: the structure of the matrix $S_N$ and the commutation of time derivatives with the corresponding basis expansion.
For each $n\ge 0$, let $P_n$ be the Legendre polynomial of degree $n$ on $(-1,1)$, defined by Rodrigues' formula
\begin{equation*}%\label{eq:Pn_Rodrigues}
P_n(x)=\frac{1}{2^n n!}\frac{{\rm d}^n}{{\rm d}x^n}(x^2-1)^n,
\qquad x\in(-1,1).
\end{equation*}
Mapping $t\in(0,T)$ to $x=\frac{2t}{T}-1\in(-1,1)$, we define the shifted and normalized Legendre functions on $(0,T)$ by
\begin{equation}\label{eq:Qn_def}
Q_n(t)=\sqrt{\frac{2n+1}{T}}\,P_n\!\left(\frac{2t}{T}-1\right),
\qquad t\in(0,T).
\end{equation}
Then $\{Q_n\}_{n\ge 0}$ is an orthonormal basis of $L^2(0,T)$, that is,
\[
\int_0^T Q_n(t)Q_m(t)\,{\rm d}t=\delta_{mn},
\qquad m,n\ge 0.
\]

On $L^2(0,T)$, we use the exponentially weighted inner product
\[
\langle u,v\rangle_{e^{-2t}}
:=\int_0^T e^{-2t}u(t)v(t)\,{\rm d}t,
\qquad
\|u\|_{e^{-2t}}
:=\langle u,u\rangle_{e^{-2t}}^{1/2}.
\]
Since $e^{-2t}\in [e^{-2T},1]$ for all $t\in[0,T]$, the norm $\|\cdot\|_{e^{-2t}}$ is equivalent to the standard $L^2(0,T)$ norm.  
We define the \textit{Legendre polynomial--exponential} functions by
\begin{equation}\label{eq:Psin_def}
\Psi_n(t):=e^tQ_n(t),
\qquad t\in(0,T),\ n\ge 0.
\end{equation}
Then $\{\Psi_n\}_{n\ge 0}$ is an orthonormal basis of $L^2(0,T)$ with respect to the weighted inner product $\langle \cdot,\cdot\rangle_{e^{-2t}}$. Indeed,
\[
\langle \Psi_n,\Psi_m\rangle_{e^{-2t}}
=\int_0^T e^{-2t}\Psi_n(t)\Psi_m(t)\,{\rm d}t
=\int_0^T Q_n(t)Q_m(t)\,{\rm d}t
=\delta_{mn}.
\]

The following proposition describes the structure of the matrix generated by the time derivatives of the basis functions.

\begin{Proposition}\label{prop:SN_structure}
Let $\{\Psi_n\}_{n\ge 0}$ be defined by \eqref{eq:Psin_def}. For $m,n\ge 0$, define
\begin{equation}\label{eq:smn_def}
s_{mn}:=\int_0^T e^{-2t}\,\Psi_n'(t)\Psi_m(t)\,dt,
\end{equation}
and, for each $N\ge 0$, let
\[
S_N:=\big[s_{mn}\big]_{m,n=0}^N.
\]
Then
\[
s_{mn}=
\begin{cases}
1, & m=n,\\
0, & m>n.
\end{cases}
\]
Consequently, $S_N$ is an upper triangular matrix with ones on the diagonal. In particular, $S_N$ is invertible for every $N\ge 0$.
\end{Proposition}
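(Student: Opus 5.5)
The plan is a direct computation. Differentiating $\Psi_n(t)=e^tQ_n(t)$ gives $\Psi_n'(t)=e^tQ_n(t)+e^tQ_n'(t)$. Substituting into \eqref{eq:smn_def}, the exponential weight cancels against the two factors $e^t$:
\[
s_{mn}=\int_0^T \big(Q_n(t)+Q_n'(t)\big)Q_m(t)\,{\rm d}t
=\delta_{mn}+\int_0^T Q_n'(t)Q_m(t)\,{\rm d}t .
\]
So everything reduces to understanding the integrals $\int_0^T Q_n'Q_m\,{\rm d}t$ for the orthonormal shifted Legendre functions.

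The key observation is a degree count. By \eqref{eq:Qn_def}, $Q_n$ is a polynomial in $t$ of degree exactly $n$, so $Q_n'$ is a polynomial of degree at most $n-1$; for $n=0$ we have $Q_0'\equiv 0$. Because $\{Q_0,\dots,Q_k\}$ is an orthonormal set of $k+1$ polynomials of degrees $0,\dots,k$, it spans the space of polynomials of degree at most $k$. Hence $Q_n'\in\operatorname{span}\{Q_0,\dots,Q_{n-1}\}$. Orthonormality of $\{Q_j\}$ in $L^2(0,T)$ then gives $\int_0^T Q_n'Q_m\,{\rm d}t=0$ whenever $m\ge n$.

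Combining this with the identity above yields the claim:
\begin{itemize}
\item for $m=n$, $s_{nn}=1+0=1$;
\item for $m>n$, $s_{mn}=0+0=0$.
\end{itemize}
Thus $S_N$ is upper triangular with unit diagonal. Its determinant is therefore the product of the diagonal entries, namely $1$, so $S_N$ is invertible for every $N$.

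No step presents a real obstacle. The only point needing care is the spanning claim, which I would justify by induction on the degree: each $Q_k$ has nonzero leading coefficient, since $P_k$ has degree exactly $k$ by Rodrigues' formula. One should also check that the affine change of variables $x=2t/T-1$ preserves the degree, which is immediate.
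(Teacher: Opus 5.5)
Your argument is correct and follows the paper's proof: you cancel the weight to get $s_{mn}=\delta_{mn}+\int_0^T Q_n'Q_m\,dt$, then use $\deg Q_n'\le n-1$ together with orthogonality to kill the integral for $m>n$. The only difference is on the diagonal, where the paper computes $\int_0^T Q_n'Q_n\,dt=\tfrac12\big[Q_n^2\big]_0^T=0$ from $P_n(\pm1)=(\pm1)^n$, whereas you note, correctly and a bit more economically, that the same degree-count argument already covers $m=n$.
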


\begin{proof}
The proof is a modification of the argument in \cite{Klibanov:jiip2017}, where an analogous result was proved for another exponential-based basis.
By \eqref{eq:Psin_def}, we have
\[
\Psi_n(t)=e^tQ_n(t),
\qquad
\Psi_n'(t)=e^t\big(Q_n'(t)+Q_n(t)\big).
\]
Substituting this into \eqref{eq:smn_def}, we obtain
\[
s_{mn}
=\int_0^T e^{-2t}\Psi_n'(t)\Psi_m(t)\,dt
=\int_0^T \big(Q_n'(t)+Q_n(t)\big)Q_m(t)\,dt
=\int_0^T Q_n'(t)Q_m(t)\,dt+\delta_{mn},
\]
where we used the orthonormality of $\{Q_n\}_{n\ge 0}$ in $L^2(0,T)$.

Since $Q_n$ is a polynomial of degree $n$, the derivative $Q_n'$ is a polynomial of degree at most $n-1$. On the other hand, $Q_m$ is orthogonal in $L^2(0,T)$ to every polynomial of degree at most $m-1$. Therefore, whenever $m>n$, we have
\[
\deg(Q_n')\le n-1\le m-1,
\]
and hence
\[
\int_0^T Q_n'(t)Q_m(t)\,dt=0.
\]
It follows that
\[
s_{mn}=0,
\qquad m>n.
\]

Next, for the diagonal entries,
\[
\int_0^T Q_n'(t)Q_n(t)\,dt
=\frac12\big[Q_n(t)^2\big]_0^T.
\]
Using \eqref{eq:Qn_def} together with the identities $P_n(-1)=(-1)^n$ and $P_n(1)=1$, we obtain
\[
Q_n(0)=(-1)^n\sqrt{\frac{2n+1}{T}},
\qquad
Q_n(T)=\sqrt{\frac{2n+1}{T}}.
\]
Hence $Q_n(0)^2=Q_n(T)^2$, so that
\[
\big[Q_n(t)^2\big]_0^T=0.
\]
Therefore,
\[
\int_0^T Q_n'(t)Q_n(t)\,dt=0,
\]
and thus
\[
s_{nn}=\int_0^T Q_n'(t)Q_n(t)\,dt+1=1.
\]
The proof is complete.
\end{proof}

Next, let $s\ge 0$. For any $w\in L^2((0,T);H^s(\Omega))$, we define its $\{\Psi_n\}$--coefficients by
\begin{equation}\label{eq:wn_def}
w_n(\x):=\int_0^T e^{-2t}\,w(\x,t)\Psi_n(t)\,dt,
\qquad n\ge 0.
\end{equation}

The following proposition shows that the first and second time derivatives commute with the $\{\Psi_n\}$--expansion. We refer to \cite[Theorem 1]{VanLeNguyen} and \cite[Theorem 1]{TrongElastic} for the proofs.

\begin{Proposition}[Commutation of time derivatives with the $\{\Psi_n\}$--expansion]\label{prop2}
Let $s\ge 0$, and let $\{\Psi_n\}_{n\ge 0}$ be defined by \eqref{eq:Psin_def}. Assume that
\[
w\in H^k((0,T);H^s(\Omega))
\]
for some integer $k\ge 5$, and let $\{w_n\}_{n\ge 0}$ be the coefficients of $w$ defined in \eqref{eq:wn_def}. Then
\[
\partial_t w=\sum_{n=0}^\infty w_n\Psi_n'
\quad\text{in } L^2((0,T);H^s(\Omega)),
\]
and
\[
\partial_t^2 w=\sum_{n=0}^\infty w_n\Psi_n''
\quad\text{in } L^2((0,T);H^s(\Omega)).
\]
\end{Proposition}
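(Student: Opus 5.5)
The plan is to reduce everything to the classical Legendre expansion of the auxiliary function $v:=e^{-t}w$. Since $e^{-2t}\Psi_n=e^{-t}Q_n$, the definition \eqref{eq:wn_def} gives
\[
w_n(\x)=\int_0^T v(\x,t)Q_n(t)\,dt .
\]
So $\{w_n\}$ are exactly the $L^2(0,T)$ Legendre coefficients of $v$. Moreover, $v\in H^k((0,T);H^s(\Omega))$, because multiplication by the smooth function $e^{-t}$ preserves this space. Thus $w=\sum_n w_n\Psi_n$ is equivalent to $v=\sum_n w_nQ_n$, and the latter converges in $L^2((0,T);H^s(\Omega))$ by Parseval in the Hilbert-space-valued sense.

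Next, for $w_N:=\sum_{n\le N}w_n\Psi_n=e^t v_N$, with $v_N:=\sum_{n\le N}w_nQ_n$, I will differentiate:
\[
\partial_t w_N=e^t(v_N+v_N'),\qquad \partial_t^2 w_N=e^t(v_N+2v_N'+v_N'').
\]
The same identities hold for $w$ and $v$. Since $e^t$ is bounded on $[0,T]$, the statement reduces to proving
\[
v_N\to v,\qquad v_N'\to v',\qquad v_N''\to v''\qquad\text{in } L^2((0,T);H^s(\Omega)),
\]
that is, convergence of the truncated Legendre series of $v$ in $H^2((0,T);H^s(\Omega))$.

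The main obstacle is this derivative convergence. Termwise differentiation of a Legendre series is not automatic. The truncation $v_N$ is not the Legendre expansion of $v'$, so I need quantitative decay of the coefficients. I will use the classical Legendre approximation estimate (Canuto--Quarteroni type): for the $L^2$-orthogonal projection $\Pi_N$ onto polynomials of degree $\le N$,
\[
\|v-\Pi_Nv\|_{H^l(0,T)}\le C N^{2l-\frac12-m}\,\|v\|_{H^m(0,T)},\qquad 0\le l\le m.
\]
This applies verbatim to $H^s(\Omega)$-valued functions, since the projection acts coefficientwise and the constants are independent of the Hilbert space; equivalently, one applies it to each component in an orthonormal basis of $H^s(\Omega)$ and sums.

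With $l=2$ we need $m>7/2$, and $m=k\ge5$ suffices. This gives the error bound $CN^{4-1/2-k}\to0$, which explains the assumption $k\ge5$.

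If I wanted to avoid quoting that estimate, I would prove coefficient decay directly. Using the Legendre differential equation $((1-x^2)P_n')'=-n(n+1)P_n$ and integrating by parts repeatedly gives $|w_n|_{H^s}\lesssim n^{-j}\|v\|_{H^{j}}$-type bounds. Combined with $\|Q_n'\|_{L^2}\lesssim n^2$ and $\|Q_n''\|_{L^2}\lesssim n^4$ (Markov-type inequalities), this makes $\sum_n\|w_n\|_{H^s}\,n^4$ converge once enough derivatives are available. That yields absolute convergence of $\sum w_n\Psi_n''$ in $L^2((0,T);H^s(\Omega))$, and the limits are identified with $\partial_t w$ and $\partial_t^2 w$ by distributional differentiation.
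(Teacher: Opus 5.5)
Your proof is correct. The paper gives no argument of its own for this proposition; it defers to Theorem~1 of \cite{VanLeNguyen} and \cite{TrongElastic}. Your proposal is therefore a self-contained alternative rather than a reconstruction of the paper's proof.

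The decisive step is the identity $e^{-2t}\Psi_n=e^{-t}Q_n$. It gives $\sum_{n\le N}w_n\Psi_n=e^{t}\,\Pi_N(e^{-t}w)$, so both claims reduce to convergence of the ordinary Legendre $L^2$-projection in $H^2((0,T);H^s(\Omega))$. The Canuto--Quarteroni estimate, extended componentwise over an orthonormal basis of $H^s(\Omega)$, then finishes the argument. Compared with a bare citation, this gives explicit rates: $O(N^{3/2-k})$ for $\partial_t w$ and $O(N^{7/2-k})$ for $\partial_t^2 w$. It also needs a weaker hypothesis. Your own remark that $m>7/2$ is required means $k\ge4$ already suffices. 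So your argument does not ``explain'' the assumption $k\ge5$; it proves a slightly stronger statement.

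Two small corrections. First, the exponent $2l-\tfrac12-m$ holds only for $1\le l\le m$. At $l=0$ the correct rate is $N^{-m}$, not $N^{-m-1/2}$. This is harmless, since you obtain the $l=0$ convergence from Parseval anyway.

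Second, in your fallback argument the Markov-type bounds are not sharp: in fact $\|Q_n'\|_{L^2(0,T)}\sim n^{3/2}$ and $\|Q_n''\|_{L^2(0,T)}\sim n^{7/2}$. With the coefficient decay $\|w_n\|_{H^s(\Omega)}\lesssim n^{-k}\|e^{-t}w\|_{H^k((0,T);H^s(\Omega))}$, your crude bound $n^4$ makes $\sum_n n^{4-k}$ converge only for $k\ge6$, which is outside the hypothesis. The sharp bound $n^{7/2}$ gives $\sum_n n^{7/2-k}<\infty$ exactly when $k\ge5$, which would match the assumption in the statement.
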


\section{The time-dimensional reduction model} \label{sec:time_reduction_system}

We now derive a time-dimensional reduction model for Problem~\ref{ISP}. The derivation proceeds in four steps. First, we eliminate the time-independent body force $\mathbf f$ by differentiating the Navier--Stokes system with respect to time. Next, we expand the unknowns in the Legendre polynomial--exponential basis. We then truncate the resulting series and project the equations onto the finite-dimensional basis. Finally, we derive the Cauchy data for the reduced unknowns from the measured lateral boundary data.

A major difficulty in Problem~\ref{ISP} is that one aims to recover the interior behavior of $2d+1$ unknown functions, namely the $d$ components of $\bu$, the pressure $p$, and the $d$ components of the body force $\mathbf f$, while only the $d+1$ coupled equations in \eqref{Navier-Stokes} are available. A first key step is therefore to eliminate $\mathbf f$. Since $\mathbf f$ is independent of time, we differentiate the momentum equation in \eqref{Navier-Stokes} with respect to $t$. This removes $\mathbf f$ from the system and yields
\begin{equation}\label{eq:NS_time_diff}
\bu_{tt}+(\bu_t\cdot\nabla)\bu+(\bu\cdot\nabla)\bu_t
=-\nabla p_t+\mu\Delta \bu_t
\qquad \text{in } \Omega \times (0, T).
\end{equation}
Taking the divergence of \eqref{eq:NS_time_diff}
\[
    \Delta p_t = \Div(-\bu_{tt} - (\bu_t\cdot\nabla)\bu -(\bu\cdot\nabla)\bu_t + \mu \Delta \bu_t) \quad \mbox{in } \Omega \times (0, T).
\]
Using $\Div\,\bu=0$ and hence $\Div\,\bu_{tt}=0$ and $\mu \Div \Delta \bu_t = \mu  \Delta \Div(\bu_t) = 0$, we obtain
\begin{align*}
    \Delta p_t &= \Div( - (\bu_t\cdot\nabla)\bu-(\bu\cdot\nabla)\bu_t ) \\
    &=  -\sum_{i = 1}^d \partial_{x_i}[\sum_{j = 1}^d \partial_t u_j \partial_{x_j} u_i] 
    - \sum_{i = 1}^d \partial_{x_i}[\sum_{j = 1}^d  u_j  \partial_{x_j} \partial_t u_i]\\
    &= -\sum_{i, j = 1}^d \partial_{x_i} (\partial_t u_j) \partial_{x_j} u_i - \sum_{i, j = 1}^d  \partial_t u_j \partial_{x_i x_j}  u_i
    - \sum_{i, j = 1}^d \partial_{x_i}  u_j  \partial_{x_j} (\partial_t u_i) - \sum_{i, j = 1}^d   u_j  \partial_{x_ ix_j} \partial_t u_i.    
\end{align*}
Note that
\begin{align*}
    \sum_{i, j = 1}^d  \partial_t u_j \partial_{x_i x_j}  u_i    &= \sum_{j = 1}^d \partial_t u_j \partial_{x_j} \sum_{i = 1}^d \partial_{x_i} u_i = \sum_{j = 1}^d \partial_t u_j \partial_{x_j} \Div \bu = 0,\\
    \sum_{i, j = 1}^d   u_j  \partial_{x_ ix_j} \partial_t u_i
    &= \sum_{j = 1}^d u_j \partial_{x_j} \sum_{i = 1}^d  \partial_{x_i}(\partial_t  u_i)
    = \sum_{j = 1}^d u_j \partial_{x_j} \Div \bu_t = 0.
\end{align*}
Hence,
\begin{equation}\label{eq:poisson_q_simplified}
\Delta p_t
=-\nabla \bu_t:(\nabla \bu)^\top-\nabla \bu:(\nabla \bu_t)^\top
\qquad \text{in } \Omega \times (0, T),
\end{equation}
where ${}^\top$ denotes matrix transpose.

We next expand $\bu$ and $p$ in the Legendre polynomial--exponential basis $\{\Psi_n\}_{n\ge 0}$. Specifically,
\[
\bu(\x,t)=\sum_{n=0}^{\infty}\bu_n(\x)\Psi_n(t),
\qquad
p(\x,t)=\sum_{n=0}^{\infty}p_n(\x)\Psi_n(t),
\]
where
\[
\bu_n(\x)=\int_0^T e^{-2t}\,\bu(\x,t)\Psi_n(t)\,dt,
\qquad
p_n(\x)=\int_0^T e^{-2t}\,p(\x,t)\Psi_n(t)\,dt.
\]
By Proposition~\ref{prop2}, the first and second time derivatives admit the expansions
\[
\bu_t(\x,t)=\sum_{n=0}^{\infty}\bu_n(\x)\Psi_n'(t),
\qquad
p_t(\x,t)=\sum_{n=0}^{\infty}p_n(\x)\Psi_n'(t),
\qquad
\bu_{tt}(\x,t)=\sum_{n=0}^{\infty}\bu_n(\x)\Psi_n''(t).
\]

Substituting these expressions into \eqref{eq:NS_time_diff} and \eqref{eq:poisson_q_simplified}, we obtain
\begin{multline}\label{eq:NS_time_diff_series}
\sum_{n=0}^{\infty}\bu_n(\x)\Psi_n''(t)
+\Big(\Big(\sum_{n=0}^{\infty}\bu_n(\x)\Psi_n'(t)\Big)\cdot\nabla\Big)
\Big(\sum_{k=0}^{\infty}\bu_k(\x)\Psi_k(t)\Big) \\
+\Big(\Big(\sum_{k=0}^{\infty}\bu_k(\x)\Psi_k(t)\Big)\cdot\nabla\Big)
\Big(\sum_{n=0}^{\infty}\bu_n(\x)\Psi_n'(t)\Big)
=-\nabla\Big(\sum_{n=0}^{\infty}p_n(\x)\Psi_n'(t)\Big)
+\mu\Delta\Big(\sum_{n=0}^{\infty}\bu_n(\x)\Psi_n'(t)\Big),
\end{multline}
and
\begin{multline}\label{eq:poisson_q_simplified_series}
\Delta\Big(\sum_{n=0}^{\infty}p_n(\x)\Psi_n'(t)\Big)
=
-\nabla\Big(\sum_{n=0}^{\infty}\bu_n(\x)\Psi_n'(t)\Big):
\Big(\nabla\Big(\sum_{k=0}^{\infty}\bu_k(\x)\Psi_k(t)\Big)\Big)^\top \\
-\nabla\Big(\sum_{k=0}^{\infty}\bu_k(\x)\Psi_k(t)\Big):
\Big(\nabla\Big(\sum_{n=0}^{\infty}\bu_n(\x)\Psi_n'(t)\Big)\Big)^\top.
\end{multline}

To obtain a finite-dimensional reduced model, we truncate the above series at level $N\in\mathbb N$. This truncation yields the following projected approximation of \eqref{eq:NS_time_diff_series} and \eqref{eq:poisson_q_simplified_series}:
%For numerical implementation, we truncate the above series at level $N\in\mathbb N$. Then \eqref{eq:NS_time_diff_series} and \eqref{eq:poisson_q_simplified_series} are approximated by
\begin{multline}\label{eq:NS_time_diff_series_N}
\sum_{n=0}^{N}\bu_n(\x)\Psi_n''(t)
+\Big(\Big(\sum_{n=0}^{N}\bu_n(\x)\Psi_n'(t)\Big)\cdot\nabla\Big)
\Big(\sum_{k=0}^{N}\bu_k(\x)\Psi_k(t)\Big) \\
+\Big(\Big(\sum_{k=0}^{N}\bu_k(\x)\Psi_k(t)\Big)\cdot\nabla\Big)
\Big(\sum_{n=0}^{N}\bu_n(\x)\Psi_n'(t)\Big)
=-\nabla\Big(\sum_{n=0}^{N}p_n(\x)\Psi_n'(t)\Big)
+\mu\Delta\Big(\sum_{n=0}^{N}\bu_n(\x)\Psi_n'(t)\Big),
\end{multline}
and
\begin{multline}\label{eq:poisson_q_simplified_series_N}
\Delta\Big(\sum_{n=0}^{N}p_n(\x)\Psi_n'(t)\Big)
=
-\nabla\Big(\sum_{n=0}^{N}\bu_n(\x)\Psi_n'(t)\Big):
\Big(\nabla\Big(\sum_{k=0}^{N}\bu_k(\x)\Psi_k(t)\Big)\Big)^\top \\
-\nabla\Big(\sum_{k=0}^{N}\bu_k(\x)\Psi_k(t)\Big):
\Big(\nabla\Big(\sum_{n=0}^{N}\bu_n(\x)\Psi_n'(t)\Big)\Big)^\top.
\end{multline}
In the remainder of this paper, we regard the truncated projected system as the reduced model associated with Problem~\ref{ISP}. The analysis in the following sections is carried out for this finite-dimensional coefficient system, which approximates the original inverse problem.

 For each $m=0,1,\dots,N$, we multiply \eqref{eq:NS_time_diff_series_N} and \eqref{eq:poisson_q_simplified_series_N} by $e^{-2t}\Psi_m(t)$ and integrate over $(0,T)$. Introduce the coefficients
\[
r_{mn}:=\int_0^T e^{-2t}\Psi_n''(t)\Psi_m(t)\,dt,
\qquad
s_{mn}:=\int_0^T e^{-2t}\Psi_n'(t)\Psi_m(t)\,dt,
\]
and
\[
c_{mkn}:=\int_0^T e^{-2t}\Psi_n'(t)\Psi_k(t)\Psi_m(t)\,dt,
\qquad m,k,n=0,1,\dots,N.
\]
Then, for each $m=0,1,\dots,N$, we obtain
\begin{multline*}%\label{eq:projected_NS_N_m}
\sum_{n=0}^N r_{mn}\,\bu_n(\x)
+\sum_{k,n=0}^N c_{mkn}\,(\bu_n(\x)\cdot\nabla)\bu_k(\x)
+\sum_{k,n=0}^N c_{mkn}\,(\bu_k(\x)\cdot\nabla)\bu_n(\x) \\
=-\sum_{n=0}^N s_{mn}\,\nabla p_n(\x)
+\mu\sum_{n=0}^N s_{mn}\,\Delta \bu_n(\x),
\qquad \x\in\Omega,
\end{multline*}
and
\begin{equation*}%\label{eq:projected_Poisson_N_m}
\sum_{n=0}^N s_{mn}\,\Delta p_n(\x)
=
-\sum_{k,n=0}^N c_{mkn}\,\nabla \bu_n(\x):\big(\nabla \bu_k(\x)\big)^\top
-\sum_{k,n=0}^N c_{mkn}\,\nabla \bu_k(\x):\big(\nabla \bu_n(\x)\big)^\top,
\qquad \x\in\Omega.
\end{equation*}

Rearranging the highest-order spatial terms to the left-hand side gives
\begin{multline}\label{eq:projected_NS_N_m_rearranged}
\mu\sum_{n=0}^N s_{mn}\,\Delta \bu_n(\x)
=
\sum_{n=0}^N r_{mn}\,\bu_n(\x)
+\sum_{k,n=0}^N c_{mkn}\,(\bu_n(\x)\cdot\nabla)\bu_k(\x) \\
+\sum_{k,n=0}^N c_{mkn}\,(\bu_k(\x)\cdot\nabla)\bu_n(\x)
+\sum_{n=0}^N s_{mn}\,\nabla p_n(\x),
\qquad \x\in\Omega,
\end{multline}
and
\begin{equation}\label{eq:projected_Poisson_N_m_rearranged}
\sum_{n=0}^N s_{mn}\,\Delta p_n(\x)
=
-\sum_{k,n=0}^N c_{mkn}\,\nabla \bu_n(\x):\big(\nabla \bu_k(\x)\big)^\top
-\sum_{k,n=0}^N c_{mkn}\,\nabla \bu_k(\x):\big(\nabla \bu_n(\x)\big)^\top,
\qquad \x\in\Omega.
\end{equation}

To write the projected system more compactly, define
\[
\bU^N(\x):=
\begin{bmatrix}
\bu_0(\x)\\
\bu_1(\x)\\
\vdots\\
\bu_N(\x)
\end{bmatrix},
\qquad
P^N(\x):=
\begin{bmatrix}
p_0(\x)\\
p_1(\x)\\
\vdots\\
p_N(\x)
\end{bmatrix},
\]
and let
\[
S_N:=[s_{mn}]_{m,n=0}^N,
\qquad
R_N:=[r_{mn}]_{m,n=0}^N.
\]
For each $m=0,1,\dots,N$, set
\begin{align*}
\mathbf F_m(\x,\bU^N,\nabla\bU^N)
&:=
\sum_{k,n=0}^N c_{mkn}\,(\bu_n(\x)\cdot\nabla)\bu_k(\x)
+\sum_{k,n=0}^N c_{mkn}\,(\bu_k(\x)\cdot\nabla)\bu_n(\x),
%\label{eq:Fm_def_short}
\\
G_m(\x,\bU^N,\nabla\bU^N)
&:=
-\sum_{k,n=0}^N c_{mkn}\,\nabla \bu_n(\x):\big(\nabla \bu_k(\x)\big)^\top
-\sum_{k,n=0}^N c_{mkn}\,\nabla \bu_k(\x):\big(\nabla \bu_n(\x)\big)^\top.
%\label{eq:Gm_def_short}
\end{align*}
We also define the stacked nonlinear terms
\[
\mathbf F^N(\x,\bU^N,\nabla\bU^N):=
\begin{bmatrix}
\mathbf F_0(\x,\bU^N,\nabla\bU^N)\\
\vdots\\
\mathbf F_N(\x,\bU^N,\nabla\bU^N)
\end{bmatrix},
\qquad
G^N(\x,\bU^N,\nabla\bU^N):=
\begin{bmatrix}
G_0(\x,\bU^N,\nabla\bU^N)\\
\vdots\\
G_N(\x,\bU^N,\nabla\bU^N)
\end{bmatrix}.
\]
Then \eqref{eq:projected_NS_N_m_rearranged}--\eqref{eq:projected_Poisson_N_m_rearranged} can be written as
\begin{equation*}%\label{eq:UNPN_system_1}
\mu\,S_N(\Delta \bU^N)
=
R_N\bU^N+\mathbf F^N(\x,\bU^N,\nabla \bU^N)+S_N(\nabla P^N)
\qquad \text{in } \Omega,
\end{equation*}
and
\begin{equation*}%\label{eq:UNPN_system_2}
S_N(\Delta P^N)
=
G^N(\x,\bU^N,\nabla \bU^N)
\qquad \text{in } \Omega,
\end{equation*}
where the matrices $S_N$ and $R_N$ act on the coefficient index $n=0,\dots,N$.

It remains to determine the boundary data for the reduced unknowns. These are obtained by projecting the measured lateral data in \eqref{data} onto the basis $\{\Psi_n\}_{n=0}^N$. Recall that $\bu=0$ on $\partial\Omega\times(0,T)$ and
\[
\partial_\nu \bu(\x,t)=\bg(\x,t),
\qquad
\bh(\x,t)=
\begin{bmatrix}
h_1(\x,t)\\
h_2(\x,t)
\end{bmatrix}
=
\begin{bmatrix}
p(\x,t)\\
\partial_\nu p(\x,t)
\end{bmatrix},
\qquad (\x,t)\in\partial\Omega\times(0,T).
\]
For each $n=0,1,\dots,N$, the boundary traces of the coefficients are
\begin{equation}\label{eq:Utrace_on_Gamma}
\bu_n(\x)=\int_0^T e^{-2t}\,\bu(\x,t)\Psi_n(t)\,dt=0,
\qquad \x\in\partial\Omega,
\end{equation}
and
\begin{equation}\label{eq:Ptrace_on_Gamma}
p_n(\x)=\int_0^T e^{-2t}\,p(\x,t)\Psi_n(t)\,dt
=\int_0^T e^{-2t}\,h_1(\x,t)\Psi_n(t)\,dt,
\qquad \x\in\partial\Omega.
\end{equation}
Similarly, their normal derivatives satisfy
\begin{equation}\label{eq:Udn_on_Gamma}
\partial_\nu \bu_n(\x)
=\int_0^T e^{-2t}\,\partial_\nu\bu(\x,t)\Psi_n(t)\,dt
=\int_0^T e^{-2t}\,\bg(\x,t)\Psi_n(t)\,dt,
\qquad \x\in\partial\Omega,
\end{equation}
and
\begin{equation}\label{eq:Pdn_on_Gamma}
\partial_\nu p_n(\x)
=\int_0^T e^{-2t}\,\partial_\nu p(\x,t)\Psi_n(t)\,dt
=\int_0^T e^{-2t}\,h_2(\x,t)\Psi_n(t)\,dt,
\qquad \x\in\partial\Omega.
\end{equation}

For later reference, we introduce
\[
\bGamma^N(\x):=
\begin{bmatrix}
\displaystyle \int_0^T e^{-2t}\,\bg(\x,t)\Psi_0(t)\,dt\\
\vdots\\
\displaystyle \int_0^T e^{-2t}\,\bg(\x,t)\Psi_N(t)\,dt
\end{bmatrix},
\qquad
\bH_1^N(\x):=
\begin{bmatrix}
\displaystyle \int_0^T e^{-2t}\,h_1(\x,t)\Psi_0(t)\,dt\\
\vdots\\
\displaystyle \int_0^T e^{-2t}\,h_1(\x,t)\Psi_N(t)\,dt
\end{bmatrix},
\]
and
\[
\bH_2^N(\x):=
\begin{bmatrix}
\displaystyle \int_0^T e^{-2t}\,h_2(\x,t)\Psi_0(t)\,dt\\
\vdots\\
\displaystyle \int_0^T e^{-2t}\,h_2(\x,t)\Psi_N(t)\,dt
\end{bmatrix},
\qquad \x\in\partial\Omega.
\]
Consequently, the Cauchy data for $\bU^N$ and $P^N$ on $\partial\Omega$ are
\begin{equation*}%\label{eq:Cauchy_UP_stacked}
\bU^N|_{\partial\Omega}(\x)=\mathbf 0,
\qquad
\partial_\nu \bU^N|_{\partial\Omega}(\x)=\bGamma^N(\x),
\qquad \x\in\partial\Omega,
\end{equation*}
and
\begin{equation*}%\label{eq:Cauchy_P_stacked}
P^N|_{\partial\Omega}(\x)=\bH_1^N(\x),
\qquad
\partial_\nu P^N|_{\partial\Omega}(\x)=\bH_2^N(\x),
\qquad \x\in\partial\Omega,
\end{equation*}
where $\mathbf 0$ denotes the $(N+1)$-vector whose entries are the zero vector in $\mathbb R^d$.

In summary, we have derived the following time-dimensional reduction system:
\begin{equation}\label{eq:time_reduction_system_final}
\begin{cases}
\mu\,S_N(\Delta \bU^N)
=
R_N\bU^N+\mathbf F^N(\x,\bU^N,\nabla \bU^N)+S_N(\nabla P^N),
& \x \in \Omega,\\[1ex]
S_N(\Delta P^N)
=
G^N(\x,\bU^N,\nabla \bU^N),
& \x \in \Omega,\\[1ex]
\bU^N|_{\partial\Omega}(\x)=\mathbf 0,
\qquad
\partial_\nu \bU^N|_{\partial\Omega}(\x)=\bGamma^N(\x),
& \x \in \partial\Omega,\\[1ex]
P^N|_{\partial\Omega}(\x)=\bH_1^N(\x),
\qquad
\partial_\nu P^N|_{\partial\Omega}(\x)=\bH_2^N(\x),
& \x \in \partial\Omega.
\end{cases}
\end{equation}

\section{The Carleman contraction framework for the reduced system}\label{sec:contraction}

In this section, we develop a Carleman contraction framework for solving the reduced system derived in Section~\ref{sec:time_reduction_system}. We first recall a Carleman estimate. We then introduce a regularized operator obtained by freezing the nonlinear terms in the reduced system. Next, we prove that this operator is contractive for sufficiently large Carleman parameter and deduce the existence and uniqueness of its fixed point. Finally, we show that this fixed point is consistent with a solution of the reduced system as the regularization parameter tends to zero.

Carleman estimates are fundamental tools in the study of partial differential equations. They were originally introduced to prove unique continuation properties; see, for instance, \cite{Carleman:1933, Protter:1960AMS}. Since then, they have become powerful tools in many areas of partial differential equations, especially in both theoretical and numerical studies of inverse problems; see, e.g., \cite{BeilinaKlibanovBook, BukhgeimKlibanov:smd1981, VoKlibanovNguyen:IP2020, KhoaKlibanovLoc:SIAMImaging2020, KlibanovLiBook, KlibanovNguyen:ip2019, LeNguyenNguyenPowell:JOSC2021, LocNguyen:ip2019}. They have also found applications in cloaking \cite{MinhLoc:tams2015} and in the computation of solutions to Hamilton--Jacobi equations \cite{KlibanovNguyenTran:JCP2022, LeNguyenTran:CAMWA2022}.

We now recall the Carleman estimate that will be used throughout this section. Let $\x_0\in \mathbb{R}^d\setminus \overline\Omega$ be such that
\[
r(\x)=|\x-\x_0|>1
\quad \text{for all } \x\in \Omega.
\]
For each $\beta>0$, define
\begin{equation*}%\label{mu}
\mu_\beta(\x)=r^{-\beta}(\x)=|\x-\x_0|^{-\beta},
\qquad \x\in \overline\Omega.
\end{equation*}

\begin{Lemma}[Carleman estimate]\label{lem:Carleman}
There exist positive constants $\beta_0$, depending only on $\x_0$, $\Omega$, and $d$, such that for every function $v\in C^2(\overline\Omega)$ satisfying
\begin{equation*}%\label{3.1}
v(\x)=\partial_\nu v(\x)=0
\quad \text{for all } \x\in \partial\Omega,
\end{equation*}
the following estimate holds:
\begin{equation*}%\label{Car est}
\int_\Omega e^{2\lambda \mu_\beta(\x)}|\Delta v|^2\,d\x
\ge
C\lambda \int_\Omega e^{2\lambda \mu_\beta(\x)}|\nabla v(\x)|^2\,d\x
+
C\lambda^3 \int_\Omega e^{2\lambda \mu_\beta(\x)}|v(\x)|^2\,d\x
\end{equation*}
for all $\beta\ge \beta_0$ and $\lambda\ge \lambda_0$. Here, $\lambda_0=\lambda_0(\x_0,\Omega,d,\beta)$ and $C=C(\x_0,\Omega,d,\beta)>0$ depend only on the listed parameters.
\end{Lemma}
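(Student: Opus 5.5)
The plan is the standard conjugation argument for Carleman estimates with the radial weight $\mu_\beta=r^{-\beta}$. Set $\varphi=\lambda\mu_\beta$ and $w=e^{\varphi}v$. Then $w$ and $\partial_\nu w$ vanish on $\partial\Omega$, because $v$ and $\partial_\nu v$ do. We write
\[
e^{\varphi}\Delta v = e^{\varphi}\Delta(e^{-\varphi}w)=\Delta w-2\nabla\varphi\cdot\nabla w+\big(|\nabla\varphi|^2-\Delta\varphi\big)w .
\]
A direct computation gives
\[
\nabla\mu_\beta=-\beta r^{-\beta-2}(\x-\x_0),\qquad |\nabla\mu_\beta|=\beta r^{-\beta-1},\qquad \Delta\mu_\beta=\beta(\beta+2-d)r^{-\beta-2}.
\]
Since $r>1$ on $\overline\Omega$ and $\Omega$ is bounded, $r$ lies in a compact interval $[r_{\min},r_{\max}]\subset(1,\infty)$. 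So all these quantities are comparable to powers of $\beta$ with constants depending only on $\x_0,\Omega,d,\beta$.

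\textbf{Step 1: split into symmetric and antisymmetric parts.} Let $A w=\Delta w+(|\nabla\varphi|^2)w$ and $Bw=-2\nabla\varphi\cdot\nabla w-(\Delta\varphi) w$. Then
\[
\int_\Omega e^{2\varphi}|\Delta v|^2\,d\x=\|Aw+Bw\|^2\ge 2\int_\Omega Aw\,Bw\,d\x .
\]

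\textbf{Step 2: integrate the cross term by parts.} All boundary terms vanish because $w=\partial_\nu w=0$, hence $\nabla w=0$, on $\partial\Omega$. The cross term produces two positive leading quantities:
\[
\int_\Omega 4\lambda^3\,\nabla\mu_\beta\cdot\nabla\big(\tfrac12|\nabla\mu_\beta|^2\big)\,w^2\quad\text{(up to sign)}\qquad\text{and}\qquad 4\lambda\int_\Omega D^2\mu_\beta\nabla w\cdot\nabla w .
\]
The remaining terms are lower order, of sizes $\lambda^2|w|^2$, $\lambda^3\beta^3(\cdot)|w|^2$ with worse $\beta$-dependence, and so on. For the zeroth-order term we compute
\[
-\nabla\mu_\beta\cdot\nabla|\nabla\mu_\beta|^2 = 2\beta^3(\beta+1)r^{-3\beta-4}>0,
\]
which dominates $\Delta\varphi\,|\nabla\varphi|^2\sim\beta^3 r^{-3\beta-4}$ once $\beta\ge\beta_0$. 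This gives a lower bound $c\lambda^3\int w^2$.

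\textbf{Step 3: the gradient term.} This is the main obstacle, because the Hessian $D^2\mu_\beta$ is not positive definite in the tangential directions. Its eigenvalues are $\beta(\beta+1)r^{-\beta-2}$ radially and $-\beta r^{-\beta-2}$ tangentially. I would first try the usual fix: add a multiple of the identity $\int Aw\cdot(\lambda\,\Delta\mu_\beta)w$-type term. Concretely, I would use $\int\Delta w\,(\sigma\lambda w)$, which equals $-\sigma\lambda\int|\nabla w|^2$ plus lower-order terms. This shifts the Hessian form by a multiple of the identity, and choosing $\beta$ large makes the combined form $\ge c\beta^2 r^{-\beta-2}|\nabla w|^2$. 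If that fails, the fallback is to obtain the $\lambda^3$ estimate first and then recover the gradient term separately. For that, multiply $e^{\varphi}\Delta v$ by $\lambda w$ and use $\lambda\int|\nabla w|^2\le \lambda\int|e^{\varphi}\Delta v||w|+C\lambda^3\int w^2$. Combining this with Step 2 gives
\[
\int e^{2\varphi}|\Delta v|^2\ge C\lambda\int|\nabla w|^2+C\lambda^3\int w^2 .
\]

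\textbf{Step 4: return to $v$.} Use $\nabla w=e^{\varphi}(\nabla v+\lambda v\nabla\mu_\beta)$, so that
\[
e^{2\varphi}|\nabla v|^2\le 2|\nabla w|^2+2\lambda^2|\nabla\mu_\beta|^2 w^2 .
\]
The term $\lambda\cdot\lambda^2 w^2$ is absorbed by the $\lambda^3$ term. Finally, take $\lambda_0$ large enough to absorb every lower-order contribution, and invoke density to pass from smooth functions to all $v\in C^2(\overline\Omega)$.
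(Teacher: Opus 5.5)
The paper does not prove this lemma; it quotes it from the literature. Your conjugation route is the standard argument behind such results, and Steps 1 and 4 are fine. With your splitting, Step 2 is in fact an exact identity,
\[
2\int_\Omega Aw\,Bw\,d\x=4\int_\Omega D^2\varphi\,\nabla w\cdot\nabla w\,d\x+4\int_\Omega D^2\varphi\,\nabla\varphi\cdot\nabla\varphi\,w^2\,d\x-\int_\Omega \Delta^2\varphi\,w^2\,d\x,
\]
with $D^2\varphi\,\nabla\varphi\cdot\nabla\varphi=\lambda^3\beta^3(\beta+1)r^{-3\beta-4}$. Two side remarks. Your $-\nabla\mu_\beta\cdot\nabla|\nabla\mu_\beta|^2$ is negative, not positive. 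The $\Delta\varphi\,|\nabla\varphi|^2$ terms cancel identically, but they have size $\lambda^3\beta^3(\beta+2-d)r^{-3\beta-4}$, not $\beta^3$. That is the same order in $\beta$ as the main term, so competing $\lambda^3$-terms are not absorbed ``for free'' by taking $\beta$ large. The gap is exactly at the obstacle you identify. The bound $c\lambda^3\int w^2$ announced at the end of Step 2 ignores that $4\lambda\int D^2\mu_\beta\nabla w\cdot\nabla w$ can be as negative as $-4\lambda\beta\int_\Omega r^{-\beta-2}|\nabla w|^2$, because of the tangential eigenvalue $-\beta r^{-\beta-2}$. Step 3 never actually controls this term.

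Your shift $Bw\mapsto Bw-\lambda\sigma w$ adds $2\lambda\int\sigma|\nabla w|^2$, but it also adds $-2\lambda\int\sigma|\nabla\varphi|^2w^2$ from the $|\nabla\varphi|^2w$ part of $A$. That term is not lower order: it is $O(\lambda^3)$ and competes head-on with $4\lambda^3\beta^3(\beta+1)\int r^{-3\beta-4}w^2$. For your argument to close you need, pointwise, $2\beta r^{-\beta-2}\le\sigma<2\beta(\beta+1)r^{-\beta-2}$. A constant $\sigma$ can satisfy this only if $(r_{\max}/r_{\min})^{\beta+2}<\beta+1$, which fails for all large $\beta$. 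So with a constant shift, enlarging $\beta$ works against you.

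The fallback is circular: it presupposes the $\lambda^3$ bound. Moreover, the unweighted inequality $\lambda\int|\nabla w|^2\le\lambda\int|e^{\varphi}\Delta v||w|+C\lambda^3\int w^2$ is too crude to absorb the tangential term, since the pointwise gain/cost ratio is $(\beta+1)(r_{\min}/r_{\max})^{\beta+2}\to0$.

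The repair is to match the shift to the weight, for instance your first suggestion $\lambda\sigma=\Delta\varphi$. Set $Q_2w=-2\nabla\varphi\cdot\nabla w-2\Delta\varphi\,w$, so that $e^{\varphi}\Delta v=Aw+Q_2w+\Delta\varphi\,w$ and $\|e^{\varphi}\Delta v\|^2\ge\frac12\|Aw+Q_2w\|^2-\|\Delta\varphi\,w\|^2$. Then $2\int Aw\,Q_2w$ has:
\begin{itemize}
\item tangential gradient coefficient $2\lambda\beta(\beta-d)r^{-\beta-2}$;
\item radial gradient coefficient $2\lambda\beta(3\beta+4-d)r^{-\beta-2}$;
\item zeroth-order coefficient $2\lambda^3\beta^3(\beta+d)r^{-3\beta-4}$;
\item remainders $-2\int\Delta^2\varphi\,w^2$ and $\|\Delta\varphi\,w\|^2$, both at most of order $\lambda^2\int w^2$.
\end{itemize}
This gives the lemma for $\beta>d$ and $\lambda$ large. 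Equivalently, run your fallback with the weighted multiplier $4\lambda\beta r^{-\beta-2}w$ instead of $\lambda w$. It trades the tangential term for $4\lambda^3\beta^3\int r^{-3\beta-4}w^2$ plus absorbable terms, leaving a net $4\lambda^3\beta^4\int r^{-3\beta-4}w^2$. Finally, no density argument is needed: every integration by parts is valid for $w\in C^2(\overline\Omega)$.
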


Lemma~\ref{lem:Carleman} is a direct consequence of \cite[Lemma 5]{MinhLoc:tams2015}; see also \cite[Lemma 2.1]{LeNguyenTran:CAMWA2022} for details. An alternative derivation, with a different Carleman weight function, can be obtained from \cite[Chapter 4, Section 1, Lemma 3]{Lavrentiev:AMS1986}; see also \cite[Section 3]{LeNguyenNguyenPowell:JOSC2021}. We also refer the reader to \cite{BeilinaKlibanovBook, BukhgeimKlibanov:smd1981, KlibanovLiBook, LocNguyen:ip2019} for various forms of Carleman estimates and their applications. Variants under partial boundary conditions are available as well; see, for example, \cite{KlibanovNguyenTran:JCP2022, NguyenLiKlibanov:2019}.

We now apply Lemma~\ref{lem:Carleman} to the reduced system \eqref{eq:time_reduction_system_final}. Since $S_N$ is invertible by Proposition~\ref{prop:SN_structure}, system \eqref{eq:time_reduction_system_final} can be rewritten as
\begin{equation}\label{4.4}
\begin{cases}
\Delta \bU^N
=
(\mu S_N)^{-1}\big[R_N\bU^N+\mathbf F^N(\x,\bU^N,\nabla \bU^N)+S_N(\nabla P^N)\big],
& \x \in \Omega,\\[1ex]
\Delta P^N
=
S_N^{-1}G^N(\x,\bU^N,\nabla \bU^N),
& \x \in \Omega,\\[1ex]
\bU^N|_{\partial\Omega}(\x)=\mathbf 0,
\qquad
\partial_\nu \bU^N|_{\partial\Omega}(\x)=\bGamma^N(\x),
& \x \in \partial\Omega,\\[1ex]
P^N|_{\partial\Omega}(\x)=\bH_1^N(\x),
\qquad
\partial_\nu P^N|_{\partial\Omega}(\x)=\bH_2^N(\x),
& \x \in \partial\Omega.
\end{cases}
\end{equation}

We seek a solution $(\bU^N,P^N)$ of \eqref{4.4} in the admissible set
\begin{multline*}
\mathcal{A}
=
\Big\{
(\bV,\bQ)\in [H^s(\Omega)]^{d(N+1)}\times [H^s(\Omega)]^{N+1}:\ 
\|\bV\|_{[L^\infty(\Omega)]^{d(N+1)}}\le M,\ 
\|\bQ\|_{[L^\infty(\Omega)]^{N+1}}\le M,\\
\bV|_{\partial\Omega}=\mathbf 0,\ 
\partial_\nu \bV|_{\partial\Omega}=\bGamma^N,\ 
\bQ|_{\partial\Omega}=\bH_1^N,\ 
\partial_\nu \bQ|_{\partial\Omega}=\bH_2^N
\Big\},
\end{multline*}
where $M>0$ is a sufficiently large constant, and $s>\frac d2+2$ is chosen so that $H^s(\Omega)$ is continuously embedded into $C^2(\overline\Omega)$. Throughout the paper, we assume that $\mathcal A$ is nonempty. It is straightforward to verify that $\mathcal A$ is a closed and convex subset of
\begin{equation*}%\label{H}
H := [H^s(\Omega)]^{d(N+1)}\times [H^s(\Omega)]^{N+1}.
\end{equation*}

Fix $\beta>\beta_0$, let $\lambda_0$ be as in Lemma~\ref{lem:Carleman}, and let $\epsilon>0$. For each $\lambda>\lambda_0$, we define an operator
\[
\mathcal T_{\lambda,\epsilon}:\mathcal A\to\mathcal A
\]
as follows. Given any $(\bV,\bQ)\in\mathcal A$, let
\[
\mathcal T_{\lambda,\epsilon}(\bV,\bQ)
=
\underset{(\bW,\bR)\in\mathcal A}{\operatorname{argmin}}
\,J_{\lambda,\epsilon}(\bW,\bR;\bV,\bQ),
\]
where
\begin{multline*}
J_{\lambda,\epsilon}(\bW,\bR;\bV,\bQ)
:=
\int_\Omega e^{2\lambda\mu_\beta(\x)}
\left|
\Delta \bW
-(\mu S_N)^{-1}\big[
R_N\bW+\mathbf F^N(\x,\bV,\nabla \bV)+S_N(\nabla \bQ)
\big]
\right|^2
\,d\x
\\
+
\int_\Omega e^{2\lambda\mu_\beta(\x)}
\left|
\Delta \bR
-S_N^{-1}G^N(\x,\bV,\nabla \bV)
\right|^2
\,d\x
+
\epsilon \|(\bW,\bR)\|_H^2.
%\label{eq:J_frozen}
\end{multline*}

\begin{Remark}[Well-posedness of the minimization problem]%\label{rem:T_welldefined}
For each fixed $(\bV,\bQ)\in\mathcal A$, the functional $J_{\lambda,\epsilon}(\bW,\bR;\bV,\bQ)$ is convex in the variables $(\bW,\bR)$ because the nonlinear terms are frozen at $(\bV,\bQ)$. Moreover, the regularization term with $\epsilon>0$ makes this functional strictly convex. Since $\mathcal A$ is a closed and convex subset of $H$, the minimization problem defining $\mathcal T_{\lambda,\epsilon}(\bV,\bQ)$ has a unique solution. Hence, $\mathcal T_{\lambda,\epsilon}$ is well defined.
\end{Remark}

Having defined $\mathcal T_{\lambda,\epsilon}$, we next show that it is contractive on $\mathcal A$ with respect to the norm
\begin{equation}\label{norm}
\|(\bV,\bQ)\|_{\lambda,\epsilon}^2
:=
\int_\Omega e^{2\lambda\mu_\beta(\x)}
\Big(
|\bV(\x)|^2+|\nabla \bV(\x)|^2+|\bQ(\x)|^2+|\nabla \bQ(\x)|^2
\Big)\,d\x
+\frac{\epsilon}{\lambda}\|(\bV,\bQ)\|_H^2.
\end{equation}

\begin{Theorem}[Contractive property of $\mathcal T_{\lambda,\epsilon}$]\label{thm:contractive}
Fix $\beta\ge \beta_0$, and let $\lambda_0$ be as in Lemma~\ref{lem:Carleman}. Then there exists a constant $C>0$, depending only on $\x_0$, $\Omega$, $\beta$, $d$, $\mu$, $N$, $S_N$, $R_N$, $\{c_{mkn}\}_{m,k,n=0}^N$, and $M$, such that for every $\epsilon>0$, every $\lambda\ge \lambda_0$, and every
\[
(\bV_1,\bQ_1),(\bV_2,\bQ_2)\in\mathcal A,
\]
we have
\begin{equation}\label{eq:contractive_estimate}
\|\mathcal T_{\lambda,\epsilon}(\bV_1,\bQ_1)-\mathcal T_{\lambda,\epsilon}(\bV_2,\bQ_2)\|_{\lambda,\epsilon}
\le
\frac{C}{\sqrt{\lambda}}\,
\|(\bV_1,\bQ_1)-(\bV_2,\bQ_2)\|_{\lambda,\epsilon}.
\end{equation}
In particular, if $\lambda$ is chosen sufficiently large so that
\[
\theta:=\frac{C}{\sqrt{\lambda}}\in(0,1),
\]
then $\mathcal T_{\lambda,\epsilon}$ is a contraction on $\mathcal A$ with respect to the norm $\|\cdot\|_{\lambda,\epsilon}$.
\end{Theorem}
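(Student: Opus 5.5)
Our plan is the standard variational argument for Carleman contraction maps: compare the first-order optimality conditions of the two minimization problems and then use Lemma~\ref{lem:Carleman} to control the difference of the minimizers.

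Let $(\bW_i,\bR_i)=\mathcal T_{\lambda,\epsilon}(\bV_i,\bQ_i)$ for $i=1,2$. Since $\mathcal A$ is convex and $J_{\lambda,\epsilon}(\cdot;\bV_i,\bQ_i)$ is a convex quadratic functional, each minimizer satisfies the variational inequality
\[
\langle DJ_{\lambda,\epsilon}(\bW_i,\bR_i;\bV_i,\bQ_i),(\bW,\bR)-(\bW_i,\bR_i)\rangle\ge 0
\qquad\text{for all }(\bW,\bR)\in\mathcal A .
\]
We take $(\bW,\bR)=(\bW_{3-i},\bR_{3-i})$ in the $i$-th inequality and add the two. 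Set $h=\bW_1-\bW_2$, $k=\bR_1-\bR_2$, $\delta\bV=\bV_1-\bV_2$ and $\delta\bQ=\bQ_1-\bQ_2$. The functions $h$ and $k$ have zero Cauchy data on $\partial\Omega$, since both minimizers lie in $\mathcal A$. After expanding the squares and using $2ab\le a^2/2+2b^2$, we expect
\begin{multline*}
\int_\Omega e^{2\lambda\mu_\beta}\big(|\Delta h-(\mu S_N)^{-1}R_Nh|^2+|\Delta k|^2\big)d\x+\epsilon\|(h,k)\|_H^2\\
\le C\int_\Omega e^{2\lambda\mu_\beta}\big(|\mathbf F^N(\x,\bV_1,\nabla\bV_1)-\mathbf F^N(\x,\bV_2,\nabla\bV_2)|^2+|\nabla\delta\bQ|^2\\
+|G^N(\x,\bV_1,\nabla\bV_1)-G^N(\x,\bV_2,\nabla\bV_2)|^2\big)d\x .
\end{multline*}
In this bound, the terms in $h$ are absorbed into the left-hand side as $|\Delta h-A h|^2\ge \tfrac12|\Delta h|^2-C|h|^2$, and the $\lambda^3$ term of the Carleman estimate later swallows $C|h|^2$.

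The next step is to linearize the nonlinearities. The term $\mathbf F^N$ is bilinear in $(\bV,\nabla\bV)$. Its difference is therefore bounded pointwise by $C(|\bV_1|+|\nabla\bV_2|)(|\delta\bV|+|\nabla\delta\bV|)$, which is at most $C(|\delta\bV|+|\nabla\delta\bV|)$ once we bound $\nabla\bV_2$ in $L^\infty$.

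\emph{This is the main obstacle.} The admissible set bounds only $\|\bV\|_{L^\infty}$, not $\|\nabla\bV\|_{L^\infty}$. Moreover, $G^N$ is quadratic in $\nabla\bV$, so its difference involves $\nabla\bV_1$ and $\nabla\bV_2$ multiplying $\nabla\delta\bV$. We would first try to read $M$ as bounding the $W^{1,\infty}$ norm; the constant $C$ is allowed to depend on $M$, and $H^s\subset C^2$ makes such a restriction natural. Failing that, we would add the constraint $\|\nabla\bV\|_{L^\infty}\le M$ to $\mathcal A$, which keeps $\mathcal A$ closed and convex. With either choice the right-hand side is bounded by $C\int_\Omega e^{2\lambda\mu_\beta}(|\delta\bV|^2+|\nabla\delta\bV|^2+|\nabla\delta\bQ|^2)\,d\x\le C\|(\delta\bV,\delta\bQ)\|^2_{\lambda,\epsilon}$.

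Finally, we apply Lemma~\ref{lem:Carleman} componentwise to $h$ and $k$, which is allowed because $H^s\subset C^2(\overline\Omega)$ and the Cauchy data vanish. This gives
\[
\int_\Omega e^{2\lambda\mu_\beta}\big(|\Delta h|^2+|\Delta k|^2\big)\,d\x
\ge C\lambda\int_\Omega e^{2\lambda\mu_\beta}\big(|\nabla h|^2+|\nabla k|^2+|h|^2+|k|^2\big)\,d\x
\]
after using $\lambda^3\ge\lambda$. The leftover $C|h|^2$ term is absorbed by $C\lambda^3\int_\Omega e^{2\lambda\mu_\beta}|h|^2\,d\x$ for $\lambda$ large; if needed we enlarge $\lambda_0$, or equivalently enlarge $C$ so that the claimed bound is trivially true for moderate $\lambda$. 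We then divide the whole inequality by $\lambda$, noting that $\epsilon\|(h,k)\|_H^2/\lambda$ is exactly the regularization part of $\|(h,k)\|^2_{\lambda,\epsilon}$. This yields $\|(h,k)\|^2_{\lambda,\epsilon}\le \frac{C}{\lambda}\|(\delta\bV,\delta\bQ)\|^2_{\lambda,\epsilon}$, and taking square roots gives \eqref{eq:contractive_estimate}.
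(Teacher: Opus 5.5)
Your proposal is correct and follows the paper's proof essentially step for step. Both arguments test the variational inequalities at the two constrained minimizers against each other and subtract, then apply Young's inequality and a pointwise Lipschitz bound on the frozen nonlinearities. Both then use Lemma~\ref{lem:Carleman} componentwise, absorb the $B\widetilde{\bW}$ term into the $\lambda^3$ term after enlarging $\lambda_0$, and divide by $\lambda$.

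The obstacle you flag is a real one, and here you are more careful than the paper. The paper simply asserts that $H^s(\Omega)\hookrightarrow C^2(\overline\Omega)$ makes all admissible pairs uniformly bounded together with their first derivatives. The stated $\mathcal A$, which only bounds $\bV$ and $\bQ$ in $L^\infty$, does not give this. Your added constraint $\|\nabla\bV\|_{L^\infty}\le M$ keeps $\mathcal A$ closed and convex in $H$, and it is what makes the Lipschitz step for $\mathbf F^N$ and $G^N$ rigorous.
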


\begin{proof}
Take any two pairs $(\bV_1,\bQ_1),(\bV_2,\bQ_2)\in\mathcal A$, and define
\[
(\bW_1,\bR_1):=\mathcal T_{\lambda,\epsilon}(\bV_1,\bQ_1),
\qquad
(\bW_2,\bR_2):=\mathcal T_{\lambda,\epsilon}(\bV_2,\bQ_2).
\]
Set
\[
\widetilde{\bV}:=\bV_1-\bV_2,\qquad
\widetilde{\bQ}:=\bQ_1-\bQ_2,\qquad
\widetilde{\bW}:=\bW_1-\bW_2,\qquad
\widetilde{\bR}:=\bR_1-\bR_2.
\]
Since all pairs in $\mathcal A$ satisfy the same Cauchy data on $\partial\Omega$, we have
\[
\widetilde{\bV}|_{\partial\Omega}
=
\partial_\nu \widetilde{\bV}|_{\partial\Omega}
=
\widetilde{\bQ}|_{\partial\Omega}
=
\partial_\nu \widetilde{\bQ}|_{\partial\Omega}
=0,
\]
and
\[
\widetilde{\bW}|_{\partial\Omega}
=
\partial_\nu \widetilde{\bW}|_{\partial\Omega}
=
\widetilde{\bR}|_{\partial\Omega}
=
\partial_\nu \widetilde{\bR}|_{\partial\Omega}
=0.
\]

Let
\[
B:=(\mu S_N)^{-1}R_N,
\]
and define
\[
\mathbf f_i(\x)
:=
(\mu S_N)^{-1}\Big[\mathbf F^N(\x,\bV_i,\nabla\bV_i)+S_N(\nabla \bQ_i)\Big],
\qquad
\mathbf g_i(\x)
:=
S_N^{-1}G^N(\x,\bV_i,\nabla\bV_i),
\qquad i=1,2.
\]
Then, for each $i=1,2$, the pair $(\bW_i,\bR_i)$ is the unique minimizer over $\mathcal A$ of the functional
\[
(\bW,\bR)\mapsto
\int_\Omega e^{2\lambda\mu_\beta(\x)}
\big|\Delta \bW-B\bW-\mathbf f_i(\x)\big|^2\,d\x
+
\int_\Omega e^{2\lambda\mu_\beta(\x)}
\big|\Delta \bR-\mathbf g_i(\x)\big|^2\,d\x
+
\epsilon \|(\bW,\bR)\|_H^2.
\]

Since $\mathcal A$ is convex, the standard variational inequality for constrained minimizers implies that, for every $(\bZ,\bS)\in\mathcal A$,
\begin{multline}\label{eq:VI_i_contract_clean}
\int_\Omega e^{2\lambda\mu_\beta(\x)}
\big(\Delta \bW_i-B\bW_i-\mathbf f_i(\x)\big)\cdot
\big(\Delta(\bZ-\bW_i)-B(\bZ-\bW_i)\big)\,d\x \\
+
\int_\Omega e^{2\lambda\mu_\beta(\x)}
\big(\Delta \bR_i-\mathbf g_i(\x)\big)\cdot
\Delta(\bS-\bR_i)\,d\x
+
\epsilon \big\langle (\bW_i,\bR_i),(\bZ,\bS)-(\bW_i,\bR_i)\big\rangle_H
\ge 0.
\end{multline}

Choosing $(\bZ,\bS)=(\bW_2,\bR_2)$ in \eqref{eq:VI_i_contract_clean} with $i=1$, and using
\[
\bW_2-\bW_1=-\widetilde{\bW},
\qquad
\bR_2-\bR_1=-\widetilde{\bR},
\]
we obtain
\begin{multline}\label{eq:VI_1_rewritten_clean}
\int_\Omega e^{2\lambda\mu_\beta(\x)}
\big(\Delta \bW_1-B\bW_1-\mathbf f_1(\x)\big)\cdot
\big(\Delta\widetilde{\bW}-B\widetilde{\bW}\big)\,d\x \\
+
\int_\Omega e^{2\lambda\mu_\beta(\x)}
\big(\Delta \bR_1-\mathbf g_1(\x)\big)\cdot
\Delta\widetilde{\bR}\,d\x
+
\epsilon \big\langle (\bW_1,\bR_1),(\widetilde{\bW},\widetilde{\bR})\big\rangle_H
\le 0.
\end{multline}
Similarly, choosing $(\bZ,\bS)=(\bW_1,\bR_1)$ in \eqref{eq:VI_i_contract_clean} with $i=2$, we get
\begin{multline}\label{eq:VI_2_rewritten_clean}
\int_\Omega e^{2\lambda\mu_\beta(\x)}
\big(\Delta \bW_2-B\bW_2-\mathbf f_2(\x)\big)\cdot
\big(\Delta\widetilde{\bW}-B\widetilde{\bW}\big)\,d\x \\
+
\int_\Omega e^{2\lambda\mu_\beta(\x)}
\big(\Delta \bR_2-\mathbf g_2(\x)\big)\cdot
\Delta\widetilde{\bR}\,d\x
+
\epsilon \big\langle (\bW_2,\bR_2),(\widetilde{\bW},\widetilde{\bR})\big\rangle_H
\ge 0.
\end{multline}
Subtracting \eqref{eq:VI_1_rewritten_clean} from \eqref{eq:VI_2_rewritten_clean}, we obtain
\begin{multline}\label{eq:basic_contract_est_clean}
\int_\Omega e^{2\lambda\mu_\beta(\x)}
\big|\Delta \widetilde{\bW}-B\widetilde{\bW}\big|^2\,d\x
+
\int_\Omega e^{2\lambda\mu_\beta(\x)}
\big|\Delta \widetilde{\bR}\big|^2\,d\x
+
\epsilon \|(\widetilde{\bW},\widetilde{\bR})\|_H^2 \\
\le
\int_\Omega e^{2\lambda\mu_\beta(\x)}
(\mathbf f_1(\x)-\mathbf f_2(\x))\cdot
\big(\Delta \widetilde{\bW}-B\widetilde{\bW}\big)\,d\x
+
\int_\Omega e^{2\lambda\mu_\beta(\x)}
(\mathbf g_1(\x)-\mathbf g_2(\x))\cdot
\Delta \widetilde{\bR}\,d\x.
\end{multline}

Using $2ab\le  a^2+b^2$, we deduce from \eqref{eq:basic_contract_est_clean} that
\begin{multline}\label{eq:basic_contract_est_2_clean}
\int_\Omega e^{2\lambda\mu_\beta(\x)}
\big|\Delta \widetilde{\bW}-B\widetilde{\bW}\big|^2\,d\x
+
\int_\Omega e^{2\lambda\mu_\beta(\x)}
\big|\Delta \widetilde{\bR}\big|^2\,d\x
+
2\epsilon \|(\widetilde{\bW},\widetilde{\bR})\|_H^2 \\
\le
\int_\Omega e^{2\lambda\mu_\beta(\x)}
\big|\mathbf f_1(\x)-\mathbf f_2(\x)\big|^2\,d\x
+
\int_\Omega e^{2\lambda\mu_\beta(\x)}
\big|\mathbf g_1(\x)-\mathbf g_2(\x)\big|^2\,d\x.
\end{multline}

Because the iteration is restricted to $\mathcal A$ and $s>\frac d2+2$ implies the continuous embedding $H^s(\Omega)\hookrightarrow C^2(\overline\Omega)$, all admissible pairs are uniformly bounded together with their first derivatives. Therefore, the nonlinear mappings
\[
(\bV,\bQ)\mapsto \mathbf F^N(\x,\bV,\nabla \bV)+S_N(\nabla \bQ)
\quad\text{and}\quad
\bV\mapsto G^N(\x,\bV,\nabla \bV)
\]
are Lipschitz continuous on $\mathcal A$.
Hence, there exists a constant $C>0$, depending only on $M$, $d$, $\mu$, $N$, $S_N$, $R_N$, $\{c_{mkn}\}_{m,k,n=0}^N$, and $\Omega$, such that
\begin{equation*}%\label{eq:lipschitz_rhs_clean}
\big|\mathbf f_1(\x)-\mathbf f_2(\x)\big|
+
\big|\mathbf g_1(\x)-\mathbf g_2(\x)\big|
\le
C \Big(
|\widetilde{\bV}(\x)|
+
|\nabla\widetilde{\bV}(\x)|
+
|\widetilde{\bQ}(\x)|
+
|\nabla\widetilde{\bQ}(\x)|
\Big)
\end{equation*}
for all $\x\in\Omega$. Consequently,
\begin{multline}\label{eq:rhs_estimate_contract_clean}
\int_\Omega e^{2\lambda\mu_\beta(\x)}
\big|\mathbf f_1(\x)-\mathbf f_2(\x)\big|^2\,d\x
+
\int_\Omega e^{2\lambda\mu_\beta(\x)}
\big|\mathbf g_1(\x)-\mathbf g_2(\x)\big|^2\,d\x \\
\le
C
\int_\Omega e^{2\lambda\mu_\beta(\x)}
\Big(
|\widetilde{\bV}(\x)|^2
+
|\nabla\widetilde{\bV}(\x)|^2
+
|\widetilde{\bQ}(\x)|^2
+
|\nabla\widetilde{\bQ}(\x)|^2
\Big)\,d\x.
\end{multline}

Here and in what follows, $C$ denotes a generic positive constant, depending on $M$, $d$, $\mu$, $N$, $S_N$, $R_N$, $\{c_{mkn}\}_{m,k,n=0}^N$ and independent of the unknown functions, that may vary from estimate to estimate.
To estimate the left-hand side of \eqref{eq:basic_contract_est_2_clean}, using the inequality $|a - b|^2 \geq \frac 12 a^2 - b^2$ gives
\[
\big|\Delta \widetilde{\bW}-B\widetilde{\bW}\big|^2
\ge
\frac12 |\Delta \widetilde{\bW}|^2 - |B\widetilde{\bW}|^2.
\]
Hence, since $B$ is a constant matrix
\[
\int_\Omega e^{2\lambda\mu_\beta(\x)}
\big|\Delta \widetilde{\bW}-B\widetilde{\bW}\big|^2\,d\x
\ge
\frac12
\int_\Omega e^{2\lambda\mu_\beta(\x)}
|\Delta \widetilde{\bW}|^2\,d\x
-
C
\int_\Omega e^{2\lambda\mu_\beta(\x)}
|\widetilde{\bW}|^2\,d\x.
\]

Since each component of $\widetilde{\bW}$ and $\widetilde{\bR}$ satisfies homogeneous Dirichlet and Neumann boundary conditions, Lemma~\ref{lem:Carleman} applies componentwise and yields
\[
\int_\Omega e^{2\lambda\mu_\beta(\x)}
|\Delta \widetilde{\bW}|^2\,d\x
\ge
C\lambda
\int_\Omega e^{2\lambda\mu_\beta(\x)}
|\nabla \widetilde{\bW}|^2\,d\x
+
C\lambda^3
\int_\Omega e^{2\lambda\mu_\beta(\x)}
|\widetilde{\bW}|^2\,d\x
\]
and
\[
\int_\Omega e^{2\lambda\mu_\beta(\x)}
|\Delta \widetilde{\bR}|^2\,d\x
\ge
C\lambda
\int_\Omega e^{2\lambda\mu_\beta(\x)}
|\nabla \widetilde{\bR}|^2\,d\x
+
C\lambda^3
\int_\Omega e^{2\lambda\mu_\beta(\x)}
|\widetilde{\bR}|^2\,d\x.
\]
Therefore, after enlarging $\lambda_0$ if necessary, such that for all $\lambda\ge\lambda_0$,
\begin{multline}\label{eq:lhs_estimate_contract_clean}
\int_\Omega e^{2\lambda\mu_\beta(\x)}
\big|\Delta \widetilde{\bW}-B\widetilde{\bW}\big|^2\,d\x
+
\int_\Omega e^{2\lambda\mu_\beta(\x)}
\big|\Delta \widetilde{\bR}\big|^2\,d\x \\
\ge
C\lambda
\int_\Omega e^{2\lambda\mu_\beta(\x)}
\Big(
|\widetilde{\bW}(\x)|^2
+
|\nabla\widetilde{\bW}(\x)|^2
+
|\widetilde{\bR}(\x)|^2
+
|\nabla\widetilde{\bR}(\x)|^2
\Big)\,d\x.
\end{multline}

Combining \eqref{eq:basic_contract_est_2_clean}, \eqref{eq:rhs_estimate_contract_clean}, and \eqref{eq:lhs_estimate_contract_clean}, we obtain
\begin{multline}\label{eq:before_cstar}
\lambda
\int_\Omega e^{2\lambda\mu_\beta(\x)}
\Big(
|\widetilde{\bW}(\x)|^2
+
|\nabla\widetilde{\bW}(\x)|^2
+
|\widetilde{\bR}(\x)|^2
+
|\nabla\widetilde{\bR}(\x)|^2
\Big)\,d\x
+
\epsilon \|(\widetilde{\bW},\widetilde{\bR})\|_H^2 \\
\le
C
\int_\Omega e^{2\lambda\mu_\beta(\x)}
\Big(
|\widetilde{\bV}(\x)|^2
+
|\nabla\widetilde{\bV}(\x)|^2
+
|\widetilde{\bQ}(\x)|^2
+
|\nabla\widetilde{\bQ}(\x)|^2
\Big)\,d\x.
\end{multline}
Dividing by $\lambda$ and using \eqref{norm}, we obtain
\[
\|(\widetilde{\bW},\widetilde{\bR})\|_{\lambda,\epsilon}^2
\le
\frac{C}{\lambda}
\int_\Omega e^{2\lambda\mu_\beta(\x)}
\Big(
|\widetilde{\bV}(\x)|^2
+
|\nabla\widetilde{\bV}(\x)|^2
+
|\widetilde{\bQ}(\x)|^2
+
|\nabla\widetilde{\bQ}(\x)|^2
\Big)\,d\x.
\]
Since the integral on the right-hand side is bounded above by $\|(\widetilde{\bV},\widetilde{\bQ})\|_{\lambda,\epsilon}^2$, we find that
\[
\|(\widetilde{\bW},\widetilde{\bR})\|_{\lambda,\epsilon}^2
\le
\frac{C}{\lambda}
\|(\widetilde{\bV},\widetilde{\bQ})\|_{\lambda,\epsilon}^2.
\]
Therefore,
\[
\|\mathcal T_{\lambda,\epsilon}(\bV_1,\bQ_1)-\mathcal T_{\lambda,\epsilon}(\bV_2,\bQ_2)\|_{\lambda,\epsilon}
\le
\sqrt{\frac{C}{\lambda}}\,
\|(\bV_1,\bQ_1)-(\bV_2,\bQ_2)\|_{\lambda,\epsilon}.
\]
This proves \eqref{eq:contractive_estimate}. 
In particular, if $\lambda$ is sufficiently large so that $C/\sqrt{\lambda}<1$, then $\mathcal T_{\lambda,\epsilon}$ is a contraction on $\mathcal A$.
\end{proof}

An immediate consequence of Theorem~\ref{thm:contractive} is the existence and uniqueness of a fixed point of $\mathcal T_{\lambda,\epsilon}$.

\begin{Corollary}[Existence and uniqueness of the fixed point]\label{thm:fixed_point}
Assume that $\lambda$ is chosen sufficiently large so that $\mathcal T_{\lambda,\epsilon}$ is a contraction on $\mathcal A$ with respect to the norm $\|\cdot\|_{\lambda,\epsilon}$. Then $\mathcal T_{\lambda,\epsilon}$ admits a unique fixed point $(\bU^\ast,\bP^\ast)\in\mathcal A$, that is,
\[
\mathcal T_{\lambda,\epsilon}(\bU^\ast,\bP^\ast)=(\bU^\ast,\bP^\ast).
\]
Moreover, for any initial guess $(\bU^{(0)},\bP^{(0)})\in\mathcal A$, the sequence
\[
(\bU^{(n+1)},\bP^{(n+1)})=\mathcal T_{\lambda,\epsilon}(\bU^{(n)},\bP^{(n)}),
\qquad n\ge 0,
\]
converges in the norm $\|\cdot\|_{\lambda,\epsilon}$ to $(\bU^\ast,\bP^\ast)$ as $n\to\infty$.
\end{Corollary}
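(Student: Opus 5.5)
The natural route is the Banach fixed point theorem applied to $\mathcal T_{\lambda,\epsilon}$ on the metric space $(\mathcal A,\|\cdot\|_{\lambda,\epsilon})$. Three ingredients are needed: $\mathcal T_{\lambda,\epsilon}$ maps $\mathcal A$ into itself, it is a contraction, and $\mathcal A$ is complete under this norm. The first holds by construction, since the minimizer is taken over $\mathcal A$ and is well defined by the Remark on well-posedness. The second is exactly Theorem~\ref{thm:contractive} with $\theta=C/\sqrt{\lambda}<1$.

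The point needing care is completeness. I will first show that $\|\cdot\|_{\lambda,\epsilon}$ is equivalent to the norm of $H$ for fixed $\lambda,\epsilon$. Since $r(\x)>1$ on $\overline\Omega$ and $\Omega$ is bounded, the weight $e^{2\lambda\mu_\beta}$ is bounded above and below by positive constants on $\Omega$. Also $H^s(\Omega)\hookrightarrow H^1(\Omega)$ because $s>d/2+2\ge 1$. Together these give
\[
\tfrac{\epsilon}{\lambda}\|(\bV,\bQ)\|_H^2\le \|(\bV,\bQ)\|_{\lambda,\epsilon}^2\le \Big(C_{\lambda}+\tfrac{\epsilon}{\lambda}\Big)\|(\bV,\bQ)\|_H^2 .
\]
This is precisely why the regularization term $\frac{\epsilon}{\lambda}\|\cdot\|_H^2$ was included in \eqref{norm}. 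Since $H$ is a Hilbert space and $\mathcal A$ is closed in $H$, the set $\mathcal A$ is complete for $\|\cdot\|_{\lambda,\epsilon}$.

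With these in hand, the Picard sequence satisfies $\|(\bU^{(n+1)},\bP^{(n+1)})-(\bU^{(n)},\bP^{(n)})\|_{\lambda,\epsilon}\le\theta^n\|(\bU^{(1)},\bP^{(1)})-(\bU^{(0)},\bP^{(0)})\|_{\lambda,\epsilon}$. Summing the geometric series shows the sequence is Cauchy, so it converges to some $(\bU^\ast,\bP^\ast)\in\mathcal A$. Because $\mathcal T_{\lambda,\epsilon}$ is Lipschitz, it is continuous, and passing to the limit in the recursion gives $\mathcal T_{\lambda,\epsilon}(\bU^\ast,\bP^\ast)=(\bU^\ast,\bP^\ast)$.

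Uniqueness follows directly: if two fixed points existed, their distance $D$ would satisfy $D\le\theta D$, forcing $D=0$. Iterating the contraction estimate also gives the rate $\|(\bU^{(n)},\bP^{(n)})-(\bU^\ast,\bP^\ast)\|_{\lambda,\epsilon}\le\frac{\theta^n}{1-\theta}\|(\bU^{(1)},\bP^{(1)})-(\bU^{(0)},\bP^{(0)})\|_{\lambda,\epsilon}$, valid for any starting point in $\mathcal A$.

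The only real obstacle is the norm equivalence behind completeness. Without the $\epsilon$-term, the weighted $H^1$-type norm would not control the $H^s$ norm, and $\mathcal A$ need not be complete for it.
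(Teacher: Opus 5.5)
Your proposal is correct and follows the same route as the paper: Banach's fixed-point theorem on $(\mathcal A,\|\cdot\|_{\lambda,\epsilon})$, with completeness coming from the equivalence of $\|\cdot\|_{\lambda,\epsilon}$ and the $H$-norm (secured by the $\frac{\epsilon}{\lambda}\|\cdot\|_H^2$ term) together with closedness of $\mathcal A$ in $H$. Where the paper simply cites Banach's theorem, you spell out the standard steps (the Cauchy estimate, passing to the limit, uniqueness, and the $\theta^n/(1-\theta)$ rate).
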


\begin{proof}
Since $\epsilon>0$, the norm $\|\cdot\|_{\lambda,\epsilon}$ is equivalent to the norm of $H$. Indeed, for every $(\bV,\bQ)\in H$,
\[
\frac{\epsilon}{\lambda}\|(\bV,\bQ)\|_H^2
\le
\|(\bV,\bQ)\|_{\lambda,\epsilon}^2
\le
C_{\lambda,\epsilon}\|(\bV,\bQ)\|_H^2
\]
for some constant $C_{\lambda,\epsilon}>0$. Hence, because $\mathcal A$ is closed in $H$, the metric space $(\mathcal A,\|\cdot\|_{\lambda,\epsilon})$ is complete. The conclusion therefore follows from Banach's fixed-point theorem and Theorem~\ref{thm:contractive}.
\end{proof}

We next compare this fixed point with a solution of the reduced system \eqref{4.4}.

\begin{Theorem}[Consistency of the fixed point]\label{thm:consistency}
Fix $\beta\ge \beta_0$, and let $\lambda_0$ be as in Lemma~\ref{lem:Carleman}. Assume that the reduced system \eqref{4.4} has a solution
\[
(\bU^\dagger,P^\dagger)\in \mathcal A.
\]
Then there exist $\lambda_1\ge \lambda_0$ and a constant $C>0$, depending only on $\x_0$, $\Omega$, $\beta$, $d$, $\mu$, $N$, $S_N$, $R_N$, $\{c_{mkn}\}_{m,k,n=0}^N$, and $M$, such that for every $\epsilon>0$ and every $\lambda\ge \lambda_1$, the operator $\mathcal T_{\lambda,\epsilon}$ admits a unique fixed point
\[
(\bU_{\lambda,\epsilon},P_{\lambda,\epsilon})\in\mathcal A
\]
and
\begin{equation}\label{eq:consistency_norm_estimate}
\|(\bU_{\lambda,\epsilon}-\bU^\dagger,\;P_{\lambda,\epsilon}-P^\dagger)\|_{\lambda,\epsilon}^2
\le
\frac{C\epsilon}{\lambda}\|(\bU^\dagger,P^\dagger)\|_H^2.
\end{equation}
In particular, for each fixed $\lambda\ge \lambda_1$,
\[
\|(\bU_{\lambda,\epsilon}-\bU^\dagger,\;P_{\lambda,\epsilon}-P^\dagger)\|_{\lambda,\epsilon}\to 0
\qquad \text{as } \epsilon\to 0.
\]
\end{Theorem}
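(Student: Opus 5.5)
Existence and uniqueness of the fixed point $(\bU_{\lambda,\epsilon},P_{\lambda,\epsilon})$ come directly from Theorem~\ref{thm:contractive} and Corollary~\ref{thm:fixed_point}. We choose $\lambda_1\ge\lambda_0$ so large that the contraction constant satisfies $C/\sqrt{\lambda}\le \tfrac12$ (or any fixed number below one) for $\lambda\ge\lambda_1$. The estimate \eqref{eq:consistency_norm_estimate} is then proved by repeating the variational-inequality argument of Theorem~\ref{thm:contractive}. One of the two minimizers is replaced by the exact solution $(\bU^\dagger,P^\dagger)$, and the unregularized residual of that solution vanishes.

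Set $(\bW_1,\bR_1)=(\bU_{\lambda,\epsilon},P_{\lambda,\epsilon})=\mathcal T_{\lambda,\epsilon}(\bU_{\lambda,\epsilon},P_{\lambda,\epsilon})$. Its data are $\mathbf f_1,\mathbf g_1$, built from $(\bV_1,\bQ_1)=(\bU_{\lambda,\epsilon},P_{\lambda,\epsilon})$. For $(\bU^\dagger,P^\dagger)$, let $\mathbf f^\dagger,\mathbf g^\dagger$ be the corresponding frozen data; since it solves \eqref{4.4},
\[
\Delta\bU^\dagger-B\bU^\dagger-\mathbf f^\dagger=0,\qquad \Delta P^\dagger-\mathbf g^\dagger=0 .
\]
Write \eqref{eq:VI_i_contract_clean} for $i=1$ with test pair $(\bZ,\bS)=(\bU^\dagger,P^\dagger)\in\mathcal A$. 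Add and subtract the zero residual of $(\bU^\dagger,P^\dagger)$, and set $\widetilde\bW=\bU_{\lambda,\epsilon}-\bU^\dagger$, $\widetilde\bR=P_{\lambda,\epsilon}-P^\dagger$. This gives
\[
\int_\Omega e^{2\lambda\mu_\beta}\big(|\Delta\widetilde\bW-B\widetilde\bW|^2+|\Delta\widetilde\bR|^2\big)
+\epsilon\|(\widetilde\bW,\widetilde\bR)\|_H^2
\le \int_\Omega e^{2\lambda\mu_\beta}\big[(\mathbf f_1-\mathbf f^\dagger)\cdot(\Delta\widetilde\bW-B\widetilde\bW)+(\mathbf g_1-\mathbf g^\dagger)\cdot\Delta\widetilde\bR\big]
-\epsilon\langle(\bU^\dagger,P^\dagger),(\widetilde\bW,\widetilde\bR)\rangle_H .
\]
The only new feature compared with \eqref{eq:basic_contract_est_clean} is the last term. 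The rewriting uses $\epsilon\langle(\bW_1,\bR_1),\cdot\rangle=\epsilon\langle(\widetilde\bW,\widetilde\bR),\cdot\rangle+\epsilon\langle(\bU^\dagger,P^\dagger),\cdot\rangle$.

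The terms are absorbed as follows. Young's inequality bounds the cross terms by one half of the Carleman integrals on the left plus $\tfrac12\int e^{2\lambda\mu_\beta}(|\mathbf f_1-\mathbf f^\dagger|^2+|\mathbf g_1-\mathbf g^\dagger|^2)$. It also bounds the regularization term by $\tfrac\epsilon2\|(\widetilde\bW,\widetilde\bR)\|_H^2+\tfrac\epsilon2\|(\bU^\dagger,P^\dagger)\|_H^2$. Both pairs lie in $\mathcal A$, and the first pair is $(\bV_1,\bQ_1)=(\bW_1,\bR_1)$. Hence the Lipschitz bound \eqref{eq:rhs_estimate_contract_clean} estimates the data differences by $C\int e^{2\lambda\mu_\beta}(|\widetilde\bW|^2+|\nabla\widetilde\bW|^2+|\widetilde\bR|^2+|\nabla\widetilde\bR|^2)$. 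The left side is bounded below via Lemma~\ref{lem:Carleman} exactly as in \eqref{eq:lhs_estimate_contract_clean}, using that $\widetilde\bW,\widetilde\bR$ have zero Cauchy data. This leads to
\[
(C_1\lambda-C)\int_\Omega e^{2\lambda\mu_\beta}\big(|\widetilde\bW|^2+|\nabla\widetilde\bW|^2+|\widetilde\bR|^2+|\nabla\widetilde\bR|^2\big)+\epsilon\|(\widetilde\bW,\widetilde\bR)\|_H^2\le \epsilon\|(\bU^\dagger,P^\dagger)\|_H^2 .
\]
We enlarge $\lambda_1$ so that $C_1\lambda-C\ge C_1\lambda/2$ and divide by $\lambda$. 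The left side then dominates a multiple of $\|(\widetilde\bW,\widetilde\bR)\|_{\lambda,\epsilon}^2$ by \eqref{norm}, and \eqref{eq:consistency_norm_estimate} follows. Letting $\epsilon\to0$ with $\lambda$ fixed gives the last claim.

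The main obstacle is the self-referential structure. In Theorem~\ref{thm:contractive} the frozen and output variables were different, and the factor $1/\lambda$ came from the gap between them. Here they coincide, so the Lipschitz error term $C\int e^{2\lambda\mu_\beta}(\cdots)$ must be absorbed into the left-hand side, not merely compared to the input. This works only because Carleman gives a factor $\lambda$ in front of the same weighted quantity, and it is where $\lambda\ge\lambda_1$ is genuinely used. A secondary point to check is that the constant $C$ stays independent of $\epsilon$ and $\lambda$. This holds because the Lipschitz constants depend only on $M$ and the $C^1$ bounds that $\mathcal A$ provides through $H^s\hookrightarrow C^2$.
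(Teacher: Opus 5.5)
Your proof is correct and is essentially the paper's argument: the variational inequality for the fixed point tested against $(\bU^\dagger,P^\dagger)$, Young's inequality, the Lipschitz bound on $\mathcal A$, the Carleman estimate, and absorption of the weighted $H^1$ term for $\lambda\ge\lambda_1$. The only difference is bookkeeping. You expand the inequality around $\Delta\widetilde{\bW}-B\widetilde{\bW}$, as in the contraction proof, and so obtain both parts of $\|\cdot\|_{\lambda,\epsilon}$ in a single absorption; the paper instead expands around the fixed point's residuals $\mathbf r_{\bU},r_P$, bounds those first, and then recovers the Laplacian difference and the $H$-norm part in separate steps.
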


\begin{proof}
For brevity, define
\[
\mathcal F(\x,\bV,\bQ)
:=
(\mu S_N)^{-1}\Big[\mathbf F^N(\x,\bV,\nabla\bV)+S_N(\nabla \bQ)\Big],
\qquad
\mathcal G(\x,\bV)
:=
S_N^{-1}G^N(\x,\bV,\nabla\bV),
\]
and recall that
\[
B:=(\mu S_N)^{-1}R_N.
\]
Then \eqref{4.4} can be rewritten as
\[
\Delta \bU^N-B\bU^N=\mathcal F(\x,\bU^N,P^N),
\qquad
\Delta P^N=\mathcal G(\x,\bU^N).
\]

Fix $\epsilon>0$ and $\lambda\ge \lambda_1$, where $\lambda_1$ is chosen large enough so that $\mathcal T_{\lambda,\epsilon}$ is contractive and the absorption argument below is valid. Let $(\bU_{\lambda,\epsilon},P_{\lambda,\epsilon})$ be the unique fixed point of $\mathcal T_{\lambda,\epsilon}$.

Since $(\bU_{\lambda,\epsilon},P_{\lambda,\epsilon})$ is a fixed point, it is the unique minimizer over $\mathcal A$ of the functional
\begin{multline*}
(\bW,\bR)\mapsto
\int_\Omega e^{2\lambda\mu_\beta(\x)}
\big|\Delta \bW-B\bW-\mathcal F(\x,\bU_{\lambda,\epsilon},P_{\lambda,\epsilon})\big|^2\,d\x \\
+
\int_\Omega e^{2\lambda\mu_\beta(\x)}
\big|\Delta \bR-\mathcal G(\x,\bU_{\lambda,\epsilon})\big|^2\,d\x
+
\epsilon \|(\bW,\bR)\|_H^2.
\end{multline*}
Therefore, for every $(\bZ,\bS)\in\mathcal A$, the corresponding variational inequality gives
\begin{multline}\label{eq:VI_consistency}
\int_\Omega e^{2\lambda\mu_\beta(\x)}
\big(\Delta \bU_{\lambda,\epsilon}-B\bU_{\lambda,\epsilon}-\mathcal F(\x,\bU_{\lambda,\epsilon},P_{\lambda,\epsilon})\big)\cdot
\big(\Delta(\bZ-\bU_{\lambda,\epsilon})-B(\bZ-\bU_{\lambda,\epsilon})\big)\,d\x \\
+
\int_\Omega e^{2\lambda\mu_\beta(\x)}
\big(\Delta P_{\lambda,\epsilon}-\mathcal G(\x,\bU_{\lambda,\epsilon})\big)\cdot
\Delta(\bS-P_{\lambda,\epsilon})\,d\x
+
\epsilon\big\langle (\bU_{\lambda,\epsilon},P_{\lambda,\epsilon}),(\bZ,\bS)-(\bU_{\lambda,\epsilon},P_{\lambda,\epsilon})\big\rangle_H
\ge 0.
\end{multline}

We now choose $(\bZ,\bS)=(\bU^\dagger,P^\dagger)\in\mathcal A$ and define
\[
\widetilde{\bU}:=\bU_{\lambda,\epsilon}-\bU^\dagger,
\qquad
\widetilde{P}:=P_{\lambda,\epsilon}-P^\dagger.
\]
Since both $(\bU_{\lambda,\epsilon},P_{\lambda,\epsilon})$ and $(\bU^\dagger,P^\dagger)$ belong to $\mathcal A$, they satisfy the same Cauchy data on $\partial\Omega$. Hence,
\[
\widetilde{\bU}|_{\partial\Omega}
=
\partial_\nu \widetilde{\bU}|_{\partial\Omega}
=
\widetilde{P}|_{\partial\Omega}
=
\partial_\nu \widetilde{P}|_{\partial\Omega}
=0.
\]
Using $(\bZ,\bS)=(\bU^\dagger,P^\dagger)$ in \eqref{eq:VI_consistency}, and recalling that
\[
\bU^\dagger-\bU_{\lambda,\epsilon}=-\widetilde{\bU},
\qquad
P^\dagger-P_{\lambda,\epsilon}=-\widetilde{P},
\]
we obtain
\begin{multline}\label{eq:VI_true_solution}
\int_\Omega e^{2\lambda\mu_\beta(\x)}
\big(\Delta \bU_{\lambda,\epsilon}-B\bU_{\lambda,\epsilon}-\mathcal F(\x,\bU_{\lambda,\epsilon},P_{\lambda,\epsilon})\big)\cdot
\big(\Delta\widetilde{\bU}-B\widetilde{\bU}\big)\,d\x \\
+
\int_\Omega e^{2\lambda\mu_\beta(\x)}
\big(\Delta P_{\lambda,\epsilon}-\mathcal G(\x,\bU_{\lambda,\epsilon})\big)\cdot
\Delta\widetilde{P}\,d\x
+
\epsilon\big\langle (\bU_{\lambda,\epsilon},P_{\lambda,\epsilon}),(\widetilde{\bU},\widetilde{P})\big\rangle_H
\le 0.
\end{multline}

Since $(\bU^\dagger,P^\dagger)$ solves \eqref{4.4}, we have
\[
\Delta \bU^\dagger-B\bU^\dagger=\mathcal F(\x,\bU^\dagger,P^\dagger),
\qquad
\Delta P^\dagger=\mathcal G(\x,\bU^\dagger).
\]
Subtracting these identities from the corresponding expressions for $(\bU_{\lambda,\epsilon},P_{\lambda,\epsilon})$, we get
\[
\Delta \widetilde{\bU}-B\widetilde{\bU}
=
\mathcal F(\x,\bU_{\lambda,\epsilon},P_{\lambda,\epsilon})
-
\mathcal F(\x,\bU^\dagger,P^\dagger)
+
\mathbf r_{\bU},
\]
and
\[
\Delta \widetilde{P}
=
\mathcal G(\x,\bU_{\lambda,\epsilon})
-
\mathcal G(\x,\bU^\dagger)
+
r_P,
\]
where
\[
\mathbf r_{\bU}
:=
\Delta \bU_{\lambda,\epsilon}-B\bU_{\lambda,\epsilon}-\mathcal F(\x,\bU_{\lambda,\epsilon},P_{\lambda,\epsilon}),
\qquad
r_P
:=
\Delta P_{\lambda,\epsilon}-\mathcal G(\x,\bU_{\lambda,\epsilon}).
\]
Substituting these two identities into \eqref{eq:VI_true_solution}, and using
\[
\big\langle (\bU_{\lambda,\epsilon},P_{\lambda,\epsilon}),(\widetilde{\bU},\widetilde{P})\big\rangle_H
=
\|(\widetilde{\bU},\widetilde{P})\|_H^2
+
\big\langle (\bU^\dagger,P^\dagger),(\widetilde{\bU},\widetilde{P})\big\rangle_H,
\]
we obtain
\begin{multline*}
\int_\Omega e^{2\lambda\mu_\beta(\x)}|\mathbf r_{\bU}|^2\,d\x
+
\int_\Omega e^{2\lambda\mu_\beta(\x)}|r_P|^2\,d\x
+
\epsilon\|(\widetilde{\bU},\widetilde{P})\|_H^2 \\
\le
-\int_\Omega e^{2\lambda\mu_\beta(\x)}
\mathbf r_{\bU}\cdot
\Big(
\mathcal F(\x,\bU_{\lambda,\epsilon},P_{\lambda,\epsilon})
-
\mathcal F(\x,\bU^\dagger,P^\dagger)
\Big)\,d\x \\
-
\int_\Omega e^{2\lambda\mu_\beta(\x)}
r_P\cdot
\Big(
\mathcal G(\x,\bU_{\lambda,\epsilon})
-
\mathcal G(\x,\bU^\dagger)
\Big)\,d\x
-
\epsilon\big\langle (\bU^\dagger,P^\dagger),(\widetilde{\bU},\widetilde{P})\big\rangle_H.
\end{multline*}
Using $2ab\le \frac12 a^2+2b^2$, we infer that
\begin{multline}\label{eq:residual_estimate_consistency}
\int_\Omega e^{2\lambda\mu_\beta(\x)}|\mathbf r_{\bU}|^2\,d\x
+
\int_\Omega e^{2\lambda\mu_\beta(\x)}|r_P|^2\,d\x
+
\epsilon\|(\widetilde{\bU},\widetilde{P})\|_H^2 \\
\le
C
\int_\Omega e^{2\lambda\mu_\beta(\x)}
\Big|
\mathcal F(\x,\bU_{\lambda,\epsilon},P_{\lambda,\epsilon})
-
\mathcal F(\x,\bU^\dagger,P^\dagger)
\Big|^2\,d\x \\
+
C
\int_\Omega e^{2\lambda\mu_\beta(\x)}
\Big|
\mathcal G(\x,\bU_{\lambda,\epsilon})
-
\mathcal G(\x,\bU^\dagger)
\Big|^2\,d\x
+
C\epsilon \|(\bU^\dagger,P^\dagger)\|_H^2.
\end{multline}

Because the iteration remains in $\mathcal A$, and because $s>\frac d2+2$ implies $H^s(\Omega)\hookrightarrow C^2(\overline\Omega)$, all admissible pairs are uniformly bounded together with their first derivatives. Hence, the mappings $\mathcal F$ and $\mathcal G$ are Lipschitz continuous on $\mathcal A$. Therefore, there exists a constant $C>0$, depending only on the structural parameters and $M$, such that
\begin{multline}\label{eq:lipschitz_consistency_clean}
\int_\Omega e^{2\lambda\mu_\beta(\x)}
\Big|
\mathcal F(\x,\bU_{\lambda,\epsilon},P_{\lambda,\epsilon})
-
\mathcal F(\x,\bU^\dagger,P^\dagger)
\Big|^2\,d\x \\
+
\int_\Omega e^{2\lambda\mu_\beta(\x)}
\Big|
\mathcal G(\x,\bU_{\lambda,\epsilon})
-
\mathcal G(\x,\bU^\dagger)
\Big|^2\,d\x
\\
\le
C
\int_\Omega e^{2\lambda\mu_\beta(\x)}
\Big(
|\widetilde{\bU}(\x)|^2
+
|\nabla\widetilde{\bU}(\x)|^2
+
|\widetilde{P}(\x)|^2
+
|\nabla\widetilde{P}(\x)|^2
\Big)\,d\x.
\end{multline}

Combining \eqref{eq:residual_estimate_consistency} and \eqref{eq:lipschitz_consistency_clean}, we get
\begin{multline}\label{eq:residual_estimate_consistency_2}
\int_\Omega e^{2\lambda\mu_\beta(\x)}|\mathbf r_{\bU}|^2\,d\x
+
\int_\Omega e^{2\lambda\mu_\beta(\x)}|r_P|^2\,d\x
+
\epsilon\|(\widetilde{\bU},\widetilde{P})\|_H^2 \\
\le
C
\int_\Omega e^{2\lambda\mu_\beta(\x)}
\Big(
|\widetilde{\bU}(\x)|^2
+
|\nabla\widetilde{\bU}(\x)|^2
+
|\widetilde{P}(\x)|^2
+
|\nabla\widetilde{P}(\x)|^2
\Big)\,d\x
+
C\epsilon \|(\bU^\dagger,P^\dagger)\|_H^2.
\end{multline}

On the other hand, from the definitions of $\mathbf r_{\bU}$ and $r_P$, we have
\[
\Delta \widetilde{\bU}-B\widetilde{\bU}
=
\mathbf r_{\bU}
+
\mathcal F(\x,\bU_{\lambda,\epsilon},P_{\lambda,\epsilon})
-
\mathcal F(\x,\bU^\dagger,P^\dagger),
\]
and
\[
\Delta \widetilde{P}
=
r_P
+
\mathcal G(\x,\bU_{\lambda,\epsilon})
-
\mathcal G(\x,\bU^\dagger).
\]
Therefore,
\begin{multline*}
\int_\Omega e^{2\lambda\mu_\beta(\x)}
\big|\Delta \widetilde{\bU}-B\widetilde{\bU}\big|^2\,d\x
+
\int_\Omega e^{2\lambda\mu_\beta(\x)}
|\Delta \widetilde{P}|^2\,d\x \\
\le
C
\int_\Omega e^{2\lambda\mu_\beta(\x)}|\mathbf r_{\bU}|^2\,d\x
+
C
\int_\Omega e^{2\lambda\mu_\beta(\x)}|r_P|^2\,d\x \\
+
C
\int_\Omega e^{2\lambda\mu_\beta(\x)}
\Big|
\mathcal F(\x,\bU_{\lambda,\epsilon},P_{\lambda,\epsilon})
-
\mathcal F(\x,\bU^\dagger,P^\dagger)
\Big|^2\,d\x \\
+
C
\int_\Omega e^{2\lambda\mu_\beta(\x)}
\Big|
\mathcal G(\x,\bU_{\lambda,\epsilon})
-
\mathcal G(\x,\bU^\dagger)
\Big|^2\,d\x.
\end{multline*}
Using \eqref{eq:lipschitz_consistency_clean} and \eqref{eq:residual_estimate_consistency_2}, we conclude that
\begin{multline}\label{eq:laplace_bound_consistency}
\int_\Omega e^{2\lambda\mu_\beta(\x)}
\big|\Delta \widetilde{\bU}-B\widetilde{\bU}\big|^2\,d\x
+
\int_\Omega e^{2\lambda\mu_\beta(\x)}
|\Delta \widetilde{P}|^2\,d\x \\
\le
C
\int_\Omega e^{2\lambda\mu_\beta(\x)}
\Big(
|\widetilde{\bU}(\x)|^2
+
|\nabla\widetilde{\bU}(\x)|^2
+
|\widetilde{P}(\x)|^2
+
|\nabla\widetilde{P}(\x)|^2
\Big)\,d\x
+
C\epsilon \|(\bU^\dagger,P^\dagger)\|_H^2.
\end{multline}

Since $B$ is a constant matrix, the same argument as in the proof of Theorem~\ref{thm:contractive}, together with Lemma~\ref{lem:Carleman} applied componentwise to $\widetilde{\bU}$ and $\widetilde{P}$, yields
\begin{multline}\label{eq:carleman_consistency_clean}
C_1\lambda
\int_\Omega e^{2\lambda\mu_\beta(\x)}
\Big(
|\widetilde{\bU}(\x)|^2
+
|\nabla\widetilde{\bU}(\x)|^2
+
|\widetilde{P}(\x)|^2
+
|\nabla\widetilde{P}(\x)|^2
\Big)\,d\x \\
\le
\int_\Omega e^{2\lambda\mu_\beta(\x)}
\big|\Delta \widetilde{\bU}-B\widetilde{\bU}\big|^2\,d\x
+
\int_\Omega e^{2\lambda\mu_\beta(\x)}
|\Delta \widetilde{P}|^2\,d\x
\end{multline}
for all $\lambda\ge \lambda_1$, after enlarging $\lambda_1$ if necessary.

Combining \eqref{eq:laplace_bound_consistency} and \eqref{eq:carleman_consistency_clean}, and taking $\lambda_1$ sufficiently large so that the weighted $H^1$-term on the right-hand side can be absorbed into the left-hand side, we obtain
\begin{equation}\label{eq:weighted_part_bound}
\int_\Omega e^{2\lambda\mu_\beta(\x)}
\Big(
|\widetilde{\bU}(\x)|^2
+
|\nabla\widetilde{\bU}(\x)|^2
+
|\widetilde{P}(\x)|^2
+
|\nabla\widetilde{P}(\x)|^2
\Big)\,d\x
\le
\frac{C\epsilon}{\lambda}\|(\bU^\dagger,P^\dagger)\|_H^2.
\end{equation}

Dividing \eqref{eq:residual_estimate_consistency_2} by $\lambda$ and using \eqref{eq:weighted_part_bound}, we further obtain
\[
\frac{\epsilon}{\lambda}\|(\widetilde{\bU},\widetilde{P})\|_H^2
\le
\frac{C}{\lambda}
\int_\Omega e^{2\lambda\mu_\beta(\x)}
\Big(
|\widetilde{\bU}(\x)|^2
+
|\nabla\widetilde{\bU}(\x)|^2
+
|\widetilde{P}(\x)|^2
+
|\nabla\widetilde{P}(\x)|^2
\Big)\,d\x
+
\frac{C\epsilon}{\lambda}\|(\bU^\dagger,P^\dagger)\|_H^2,
\]
and hence
\begin{equation}\label{eq:H_part_bound}
\frac{\epsilon}{\lambda}\|(\widetilde{\bU},\widetilde{P})\|_H^2
\le
\frac{C\epsilon}{\lambda}\|(\bU^\dagger,P^\dagger)\|_H^2.
\end{equation}

Finally, combining \eqref{eq:weighted_part_bound} and \eqref{eq:H_part_bound}, and recalling the definition \eqref{norm} of $\|\cdot\|_{\lambda,\epsilon}$, we conclude that
\[
\|(\bU_{\lambda,\epsilon}-\bU^\dagger,\;P_{\lambda,\epsilon}-P^\dagger)\|_{\lambda,\epsilon}^2
\le
\frac{C\epsilon}{\lambda}\|(\bU^\dagger,P^\dagger)\|_H^2.
\]
This proves \eqref{eq:consistency_norm_estimate}. The convergence statement follows immediately.
\end{proof}

This section provides the theoretical justification for the reconstruction method developed in this paper. The contractive property of $\mathcal T_{\lambda,\epsilon}$ and the consistency of its fixed point show that the Carleman--Picard iteration is a well-founded approach for solving the reduced system. We now move to the numerical implementation of this method and to computational examples based on synthetic boundary measurements.

\section{Reconstruction, numerical implementation, and numerical examples} \label{sec:numerics}

Corollary \ref{thm:fixed_point} and Theorem \ref{thm:consistency} yield an approximation of the coefficient vectors in the time-dimensional reduction system. We next use these coefficients to reconstruct the space-time solution $(\bu,p)$ and then recover the unknown initial data in the inverse problem. More precisely, once approximations of the coefficients $\{\bu_n\}_{n=0}^N$ and $\{p_n\}_{n=0}^N$ are available, we reconstruct
\[
\bu(\x,t)\approx \sum_{n=0}^N \bu_n(\x)\Psi_n(t),
\qquad
p(\x,t)\approx \sum_{n=0}^N p_n(\x)\Psi_n(t).
\]
In particular, by evaluating these expressions at $t=0$, that is,
\[
\bu(\x,0)\approx \sum_{n=0}^N \bu_n(\x)\Psi_n(0),
\qquad
p(\x,0)\approx \sum_{n=0}^N p_n(\x)\Psi_n(0),
\]
we obtain an approximation of the initial data, and hence an approximate solution of the inverse problem under consideration. The procedure is summarized in Algorithm \ref{alg:CP_time_reduction}.

\begin{algorithm}[ht]
\caption{Carleman--Picard method for the time-dimensional reduction system and reconstruction of the initial data}
\label{alg:CP_time_reduction}
\begin{algorithmic}[1]

\State Compute the projected boundary data by \eqref{eq:Utrace_on_Gamma}, \eqref{eq:Ptrace_on_Gamma}, \eqref{eq:Udn_on_Gamma}, and \eqref{eq:Pdn_on_Gamma}.

\State Choose parameters $\beta$, $\lambda$, $\epsilon$, and a maximum number of iterations $K_{\max}$ \label{s2}.

\State Choose an initial pair $(\bU^{(0)},\bP^{(0)})\in\mathcal A$. \label{s3}

\For{$k=0,1,2,\dots,K_{\max}-1$}
    \State Compute \label{s5}
    \[
    (\bU^{(k+1)},\bP^{(k+1)})
    :=
    \mathcal T_{\lambda,\epsilon}(\bU^{(k)},\bP^{(k)}),
    \]
    i.e., let $(\bU^{(k+1)},\bP^{(k+1)})$ be the minimizer over $\mathcal A$ of the frozen least-squares functional
    \[
    J_{\lambda,\epsilon}(\bW,\bR;\bU^{(k)},\bP^{(k)}).
    \] 
\EndFor

\State Set
\[
\bU^{\rm rec}:=\bU^{(K_{\max})},
\qquad
P^{\rm rec}:=\bP^{(K_{\max})}.
\]

\State Reconstruct the space-time solution by
\[
\bu^{\rm rec}(\x,t):= \sum_{n=0}^N \bu_n(\x)\Psi_n(t),
\qquad
p^{\rm rec}(\x,t):= \sum_{n=0}^N p_n(\x)\Psi_n(t),
\]
where $\bU^{\rm rec}=(\bu_0,\dots,\bu_N)^\top$ and $P^{\rm rec}=(p_0,\dots,p_N)^\top$.

\State Recover the initial data by evaluating at $t=0$:
\[
\bu^{\rm rec}(\x,0)= \sum_{n=0}^N \bu_n(\x)\Psi_n(0),
\qquad
p^{\rm rec}(\x,0)= \sum_{n=0}^N p_n(\x)\Psi_n(0).
\]

\end{algorithmic}
\end{algorithm}

\subsection{Data generation}

To generate synthetic data for the inverse solver, we numerically solve the forward incompressible Navier--Stokes system on the square domain $\Omega = (-1,1)^2$ up to the final time $T=0.4$ for prescribed test pairs $(\bff,\bu_0)$. In all numerical examples, the initial velocity $\bu_0$ is chosen to be divergence-free, while the body force $\bff$ is time-independent and depends on the test case. For simplicity in the forward simulation, we take the viscosity coefficient to be $\mu=1$.

We discretize $\Omega$ on a uniform Cartesian grid with $N_x=N_y=31$ grid points in each direction, so that $dx=dy=\frac{2}{31-1}=\frac{1}{15}\approx 0.0667$. The time interval $(0,T)$ is discretized uniformly by $t_n=n\,\Delta t$ for $n=0,1,\dots,N_t$, with $\Delta t=10^{-4}$. This choice is used in the finite-difference forward solver, with a small time step to maintain stability because the convection term is treated explicitly.

At each time level, we first compute the pressure by solving a Poisson equation. This equation is obtained by taking the divergence of the momentum equation and using the incompressibility condition $\Div \bu=0$, which yields
\begin{equation}\label{eq:p_forward_numeric}
\Delta p^n
=
-\sum_{i,j=1}^2 \partial_{x_i}u_j^n\,\partial_{x_j}u_i^n
+\Div \bff
\qquad \text{in } \Omega.
\end{equation}
The corresponding boundary condition is obtained by taking the normal component of the momentum equation on $\partial\Omega$. Since $\bu=0$ on $\partial\Omega$, both the time derivative and the convection term vanish there, and we obtain
\begin{equation*}%\label{eq:p_forward_neumann_numeric}
\partial_\nu p^n
=
\nu\cdot(\Delta \bu^n+\bff)
\qquad \text{on } \partial\Omega.
\end{equation*}
To fix the additive constant in the pressure, we impose the normalization
\begin{equation}\label{eq:p_mean_zero_numeric}
\int_\Omega p^n(\x)\,d\x=0.
\end{equation}
The pressure equation \eqref{eq:p_forward_numeric}--\eqref{eq:p_mean_zero_numeric} is discretized by finite differences on the spatial grid.

Once $p^n$ has been computed, we update the velocity by the semi-implicit scheme
\begin{equation}\label{eq:u_forward_numeric}
\frac{\bu^{n+1}-\bu^n}{\Delta t}
=
-(\bu^n\cdot\nabla)\bu^n
-\nabla p^n
+\Delta \bu^{n+1}
+\bff
\qquad \text{in } \Omega,
\end{equation}
subject to the homogeneous Dirichlet boundary condition
\begin{equation*}%\label{eq:u_forward_numeric_bc}
\bu^{n+1}=0
\qquad \text{on } \partial\Omega.
\end{equation*}
Here, the convection term and the pressure gradient are treated explicitly, while the diffusion term is treated implicitly, so that each time step requires solving only a linear elliptic problem for $\bu^{n+1}$.

The pressure equation also enforces incompressibility at the next time step. Indeed, taking the divergence of \eqref{eq:u_forward_numeric} gives
\[
\frac{\Div \bu^{n+1}-\Div \bu^n}{\Delta t}
=
-\Div\big((\bu^n\cdot\nabla)\bu^n\big)
-\Delta p^n
+\Delta(\Div \bu^{n+1})
+\Div \bff.
\]
Assuming that $\Div \bu^n=0$ and using \eqref{eq:p_forward_numeric}, we obtain
\[
\Div \bu^{n+1}-\Delta t\,\Delta(\Div \bu^{n+1})=0.
\]
Thus, formally, $\Div \bu^{n+1}$ satisfies a homogeneous elliptic equation. With compatible boundary conditions, this yields $\Div \bu^{n+1}=0$. In the discrete computation, the velocity therefore remains divergence-free up to discretization and solver errors.

After computing the time-dependent fields $\bu(\x,t)$ and $p(\x,t)$, we extract the lateral boundary data required in Problem~\ref{ISP}, namely
\[
\bg(\x,t)=\partial_\nu \bu(\x,t),
\qquad
\bh(\x,t)=
\begin{bmatrix}
p(\x,t)\\
\partial_\nu p(\x,t)
\end{bmatrix},
\qquad (\x,t)\in \partial\Omega\times(0,T),
\]
where the normal derivatives are approximated by finite differences along the boundary.

To assess the stability of the reconstruction method, we add noise to the boundary data. Let $\delta>0$ denote the noise level. For each boundary data $\mathbf y$, we define the perturbed data by
\begin{equation*}%\label{eq:noisy_data_rule}
\mathbf y^{\rm noisy}
=
\mathbf y \Big(1+\delta(-2\,\mathrm{rand}+1)\Big),
\end{equation*}
where  $\mathrm{rand}$ is an array of independent random variables uniformly distributed in $(0,1)$. Equivalently, each component of $\mathbf y$ is multiplied by a random factor in the interval $[1-\delta,\,1+\delta]$, which corresponds to uniformly distributed relative noise of level $\delta$. In our numerical tests, $\delta = 10\%.$

\subsection{Remarks on the implementation}

In this subsection, we discuss several implementation details that are important in our numerical computations.

We begin with Step \ref{s2}, in which the parameters $\beta$, $\lambda$, $\epsilon$, and $K_{\max}$ are selected. In the present work, these parameters are chosen empirically. More precisely, we use Test~1 as a reference test and manually tune the parameters until the reconstructed solution is satisfactory. Once these parameters are chosen, we keep them fixed for all other numerical tests. In our computations, we take $N=35$, $\epsilon=10^{-11}$, and $K_{\max}=5$. The Carleman weight parameters are chosen as $\x_0=(0,-10)$, $\beta=20$, and $\lambda=6$.

We now turn to Step \ref{s3}, where the initial pair is chosen. In all numerical tests, we initialize the iteration by $(\bU^{(0)},\bP^{(0)})=(\mathbf 0,\mathbf 0)$. Although this pair does not necessarily belong to $\mathcal A$, the discrepancy is corrected automatically after one iteration, since $(\bU^{(1)},\bP^{(1)})$ is defined as the minimizer of the frozen least-squares functional over $\mathcal A$, and hence lies in $\mathcal A$. Numerically, this initialization is sufficient for all examples considered in this paper.

We now discuss the implementation in Step~\ref{s5}. We observe that although $S$ is invertible, the matrix $S^{-1}$ contains ``singular" entries with large numbers when $N = 35$ as in our choice. This might cause some unnecessary errors in computation. We therefore solve the equivalent system obtained by replacing the partial differential equations in \eqref{4.4} with \eqref{eq:time_reduction_system_final}.
The corresponding definition of $J_{\lambda,\epsilon}$ is
    \begin{multline*}
J_{\lambda,\epsilon}(\bW,\bR;\bV,\bQ)
:=
\int_\Omega e^{2\lambda\mu_\beta(\x)}
\left|
\mu S_N \Delta \bW
-\big[
R_N\bW+\mathbf F^N(\x,\bV,\nabla \bV)+S_N(\nabla \bQ)
\big]
\right|^2
\,d\x
\\
+
\int_\Omega e^{2\lambda\mu_\beta(\x)}
\left|
S_N\Delta \bR
-G^N(\x,\bV,\nabla \bV)
\right|^2
\,d\x
+
\epsilon \|(\bW,\bR)\|_H^2.
%\label{eq:J_frozen}
\end{multline*}
 Given the current iterate $(\bU^{(k)},\bP^{(k)})$, we first evaluate the nonlinear terms 
 \[\mathbf F^N(\x,\bU^{(k)},\nabla \bU^{(k)}) \quad \mbox{and } \quad G^N(\x,\bU^{(k)},\nabla \bU^{(k)}),\] 
 and then freeze them in the reduced system. This leads to two linear weighted least-squares problems: one for $\bP^{(k+1)}$ and one for $\bU^{(k+1)}$. More precisely, we first solve the least-squares problem associated with the equation for $P^N$, together with the projected Dirichlet and Neumann boundary data and the Sobolev regularization terms. After obtaining $\bP^{(k+1)}$, we evaluate the coupling term $S_N(\nabla \bP^{(k+1)})$ and then solve the second linear weighted least-squares problem for $\bU^{(k+1)}$. In the MATLAB implementation, each least-squares problem is assembled as an overdetermined linear system of the form $A\mathbf{x}\approx \mathbf{b}$, where $A$ denotes the system matrix, $\mathbf{x}$ is the vector of unknown discrete values, and $\mathbf{b}$ is the corresponding right-hand side vector. We then solve this system by the MATLAB command $\texttt{x = A{\char`\\}b}$, which returns the least-squares solution. Repeating this procedure for $k=0,1,2,\dots$ generates the sequence $(\bU^{(k)},\bP^{(k)})$, which serves as a numerical approximation of the fixed point of $\mathcal T_{\lambda,\epsilon}$.

All other steps in Algorithm~\ref{alg:CP_time_reduction}, including the projection of the boundary data and the reconstruction of the space-time solution and the initial data, are straightforward to implement once the coefficient vectors have been computed.

\subsection{Numerical examples}

In this subsection, we present some numerical tests obtained by Algorithm~\ref{alg:CP_time_reduction}.

{\bf Test 1.}
We next present the first numerical test. In this example, the body force $\bff=(f_1,f_2)$ is defined by
\[
f_1(x,y)=e^{-\frac{(x-0.05)^2+(y+0.05)^2}{0.08}}
\left[
-4y(1-y^2)(1-x^2)^2-\frac{2(y+0.05)}{0.08}(1-x^2)^2(1-y^2)^2
\right],
\]
and
\begin{equation*}
f_2(x,y)=
-e^{-\frac{(x-0.05)^2+(y+0.05)^2}{0.08}}
\left[
-4x(1-x^2)(1-y^2)^2-\frac{2(x-0.05)}{0.08}(1-x^2)^2(1-y^2)^2
\right].
\end{equation*}
The force field is normalized so that its maximum absolute component equals $1$.

The true initial velocity $\bu_0^{\rm true}(x,y)=\big(u_{0,1}^{\rm true}(x,y),u_{0,2}^{\rm true}(x,y)\big)$ is defined by
\[
u_{0,1}^{\rm true}(x,y)=0.10\,e^{-\frac{x^2+y^2}{0.12}}
\left[
-4y(1-y^2)(1-x^2)^2-\frac{2y}{0.12}(1-x^2)^2(1-y^2)^2
\right],
\]
and
\begin{equation*}
u_{0,2}^{\rm true}(x,y)=
-0.10\,e^{-\frac{x^2+y^2}{0.12}}
\left[
-4x(1-x^2)(1-y^2)^2-\frac{2x}{0.12}(1-x^2)^2(1-y^2)^2
\right].
\end{equation*}
This field is also normalized so that its maximum absolute component equals $1$.
A direct computation shows that $\bu_0^{\rm true}(x,y)=\bigl(u_{0,1}^{\rm true}(x,y),\,u_{0,2}^{\rm true}(x,y)\bigr)$ is divergence-free in $\Omega$, namely, $\nabla\cdot \bu_0^{\rm true}=0$.
The reconstruction of the initial velocity is displayed in Figure \ref{fig:case1_all}.

\begin{figure}[ht]
\centering

\subfloat[$u_{0,1}^{\rm true}$\label{fig:case1_true_u1}]{
    \includegraphics[width=0.25\textwidth]{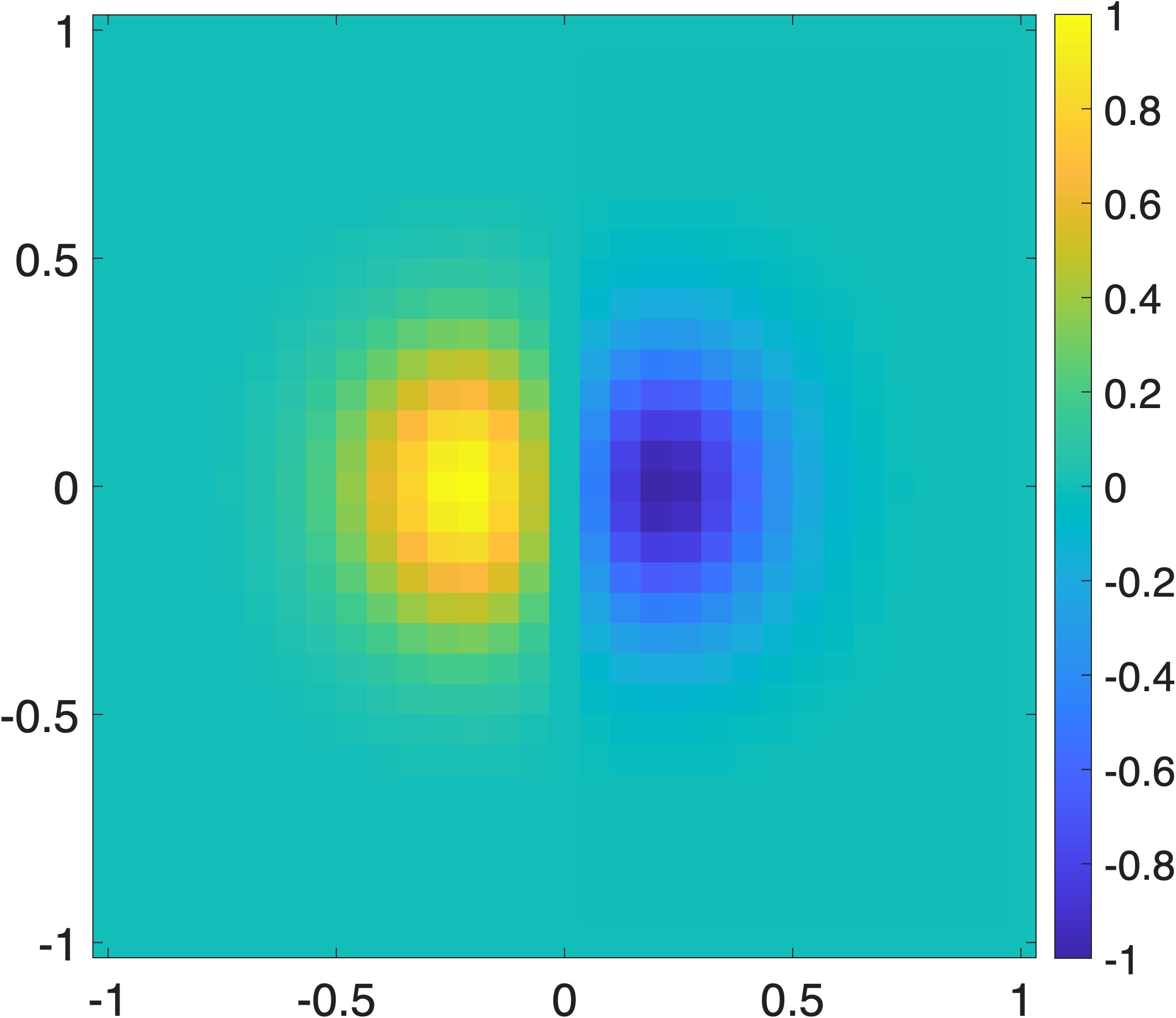}
}
\quad
\subfloat[$u_{0,2}^{\rm true}$\label{fig:case1_true_u2}]{
    \includegraphics[width=.25\textwidth]{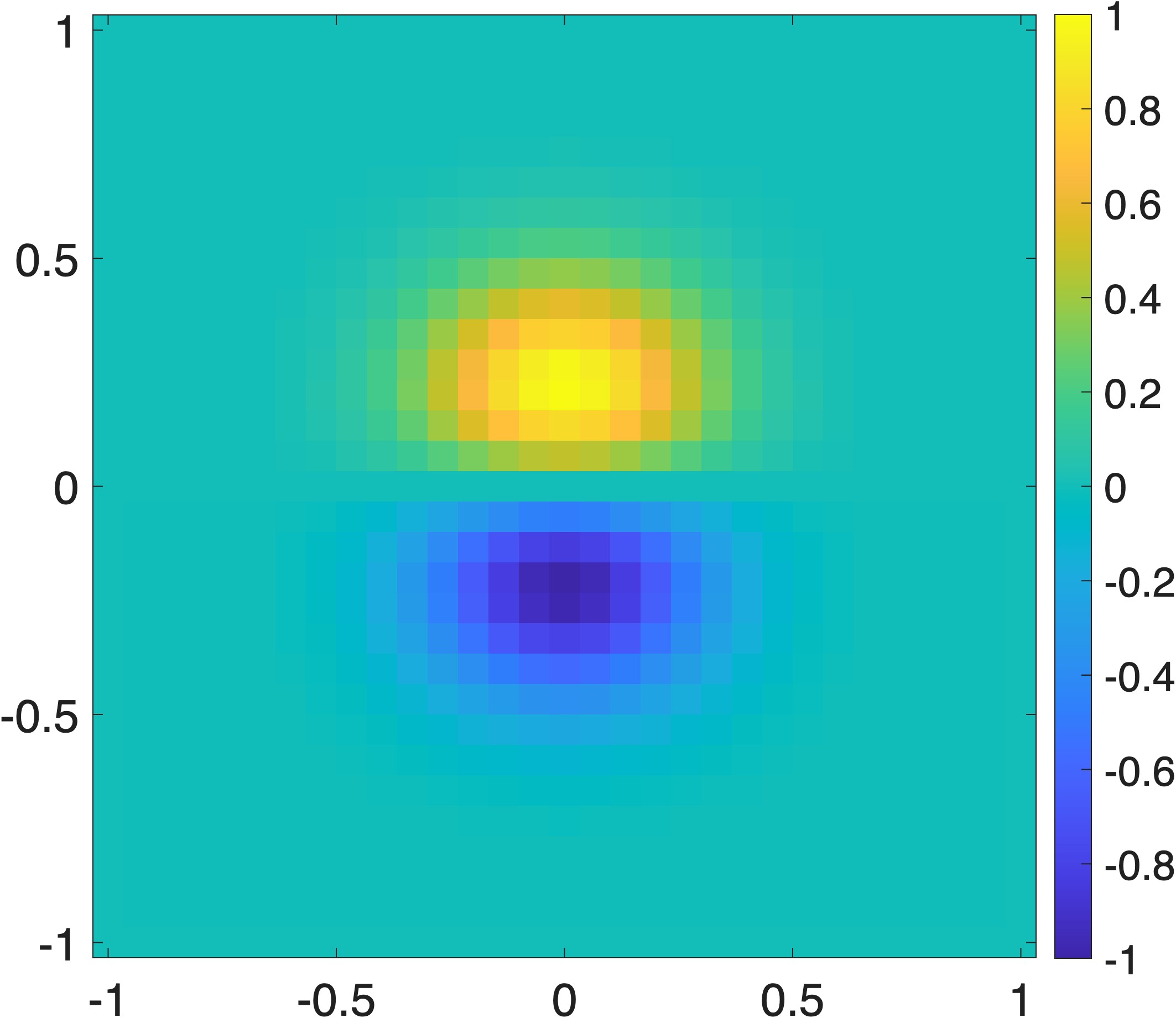}
}
\quad
\subfloat[$p_0^{\rm true}(\cdot,0)$\label{fig:case1_true_p0}]{
    \includegraphics[width=.25\textwidth]{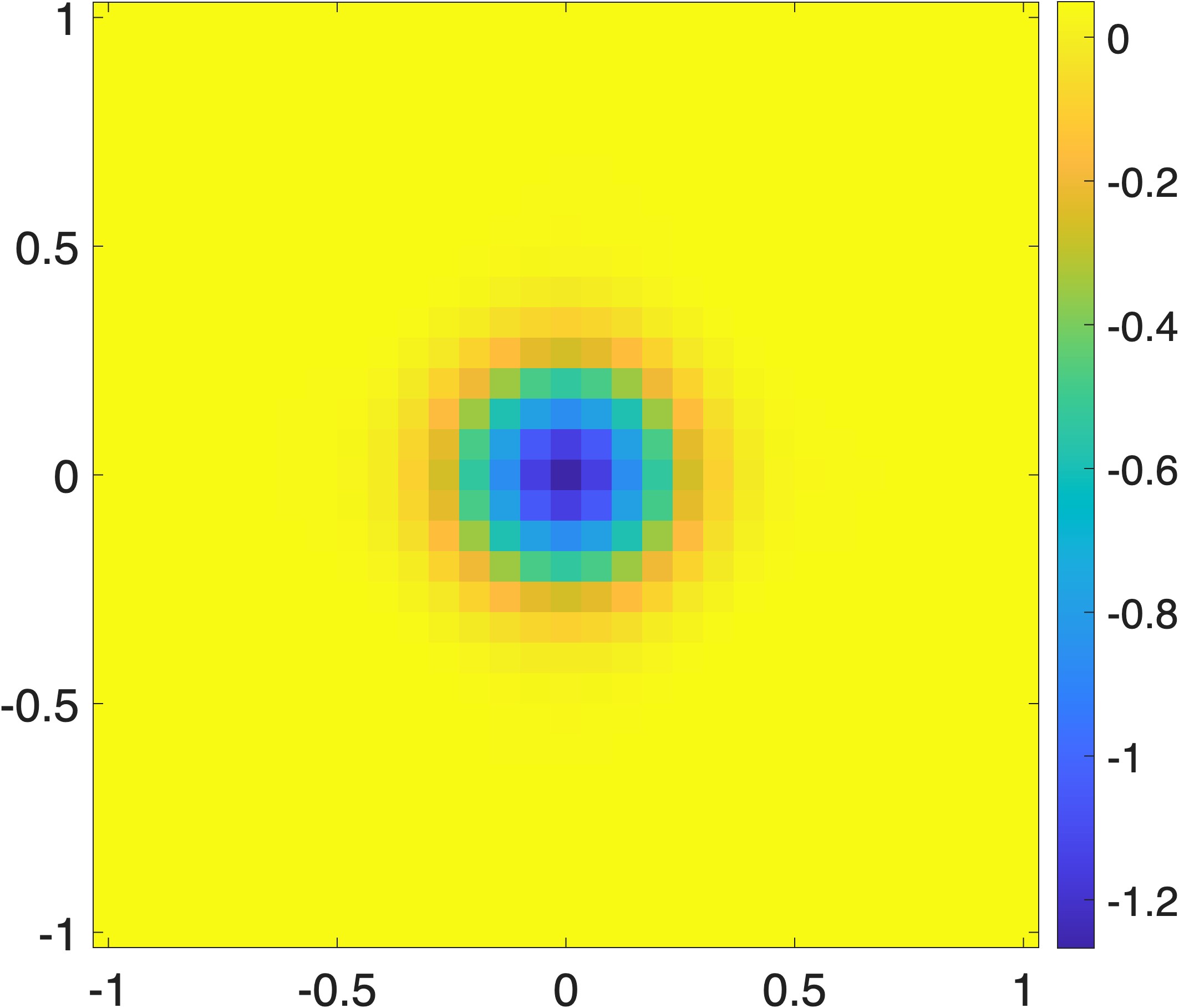}
}

\subfloat[$u_{0,1}^{\rm rec}$\label{fig:case1_rec_u1}]{
    \includegraphics[width=0.25\textwidth]{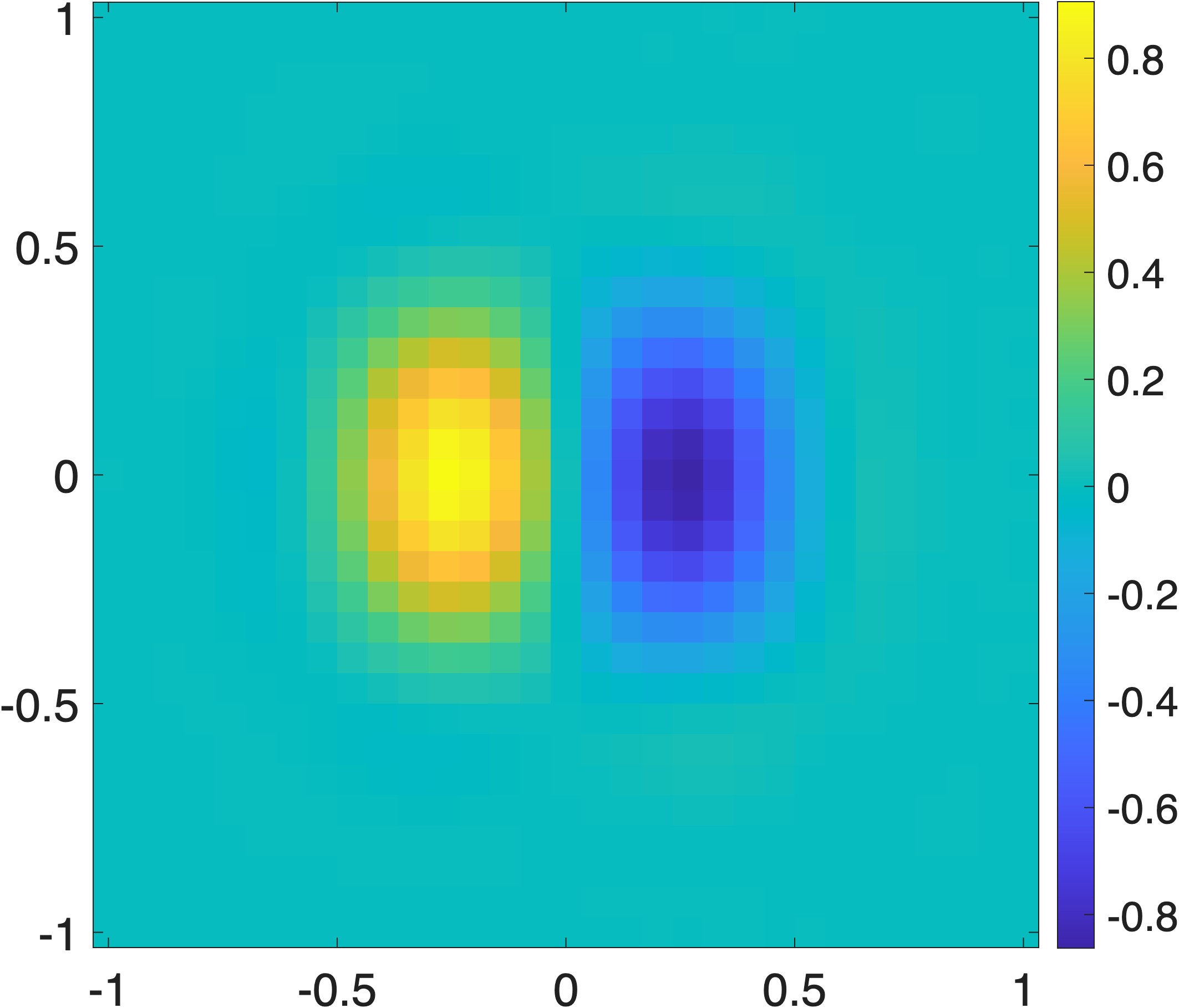}
}
\quad
\subfloat[$u_{0, 2}^{\rm rec}(\cdot,0)$\label{fig:case1_rec_u2}]{
    \includegraphics[width=0.25\textwidth]{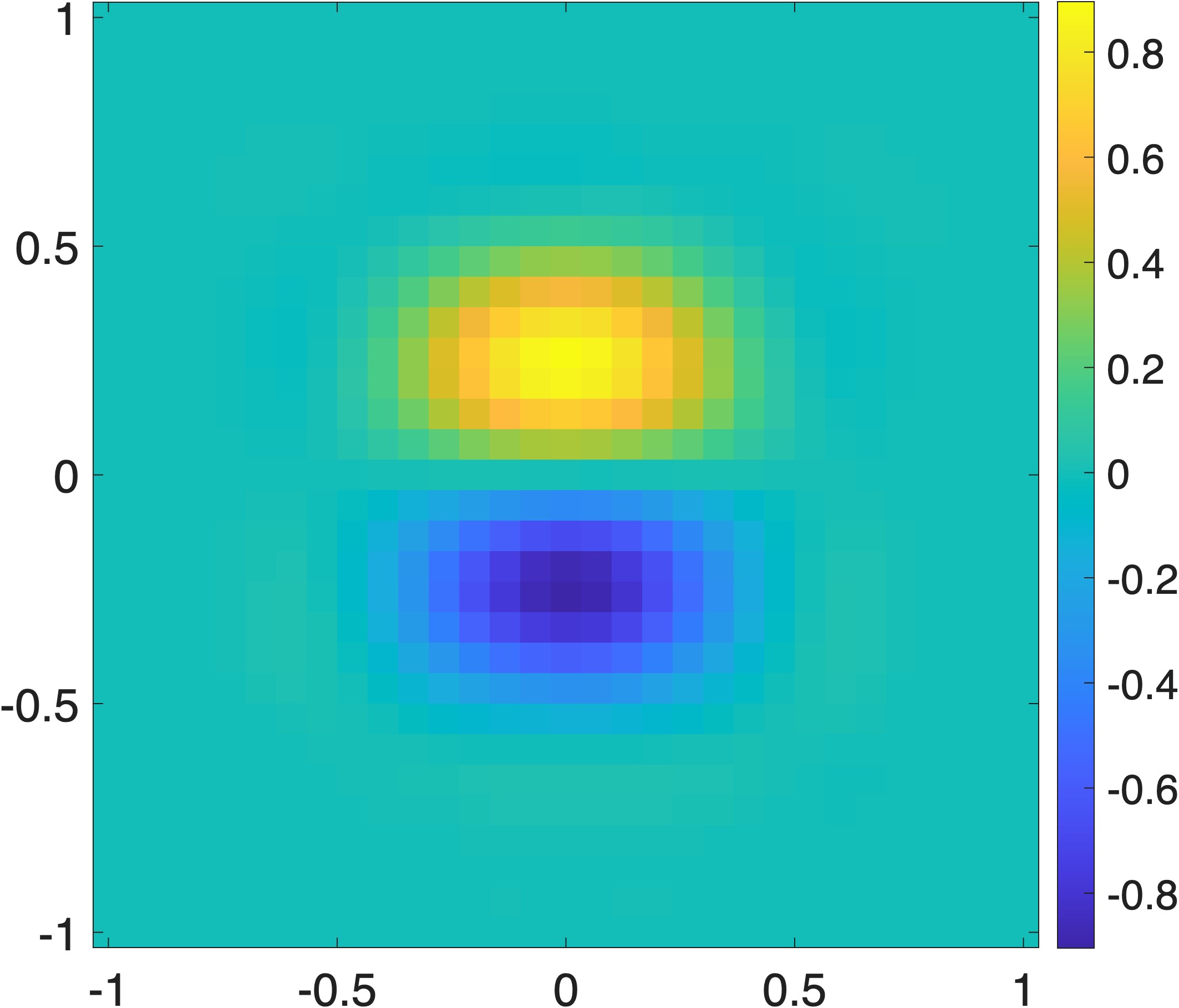}
}
\quad
\subfloat[$p_0^{\rm rec}$\label{fig:case1_rec_p0}]{
    \includegraphics[width=0.25\textwidth]{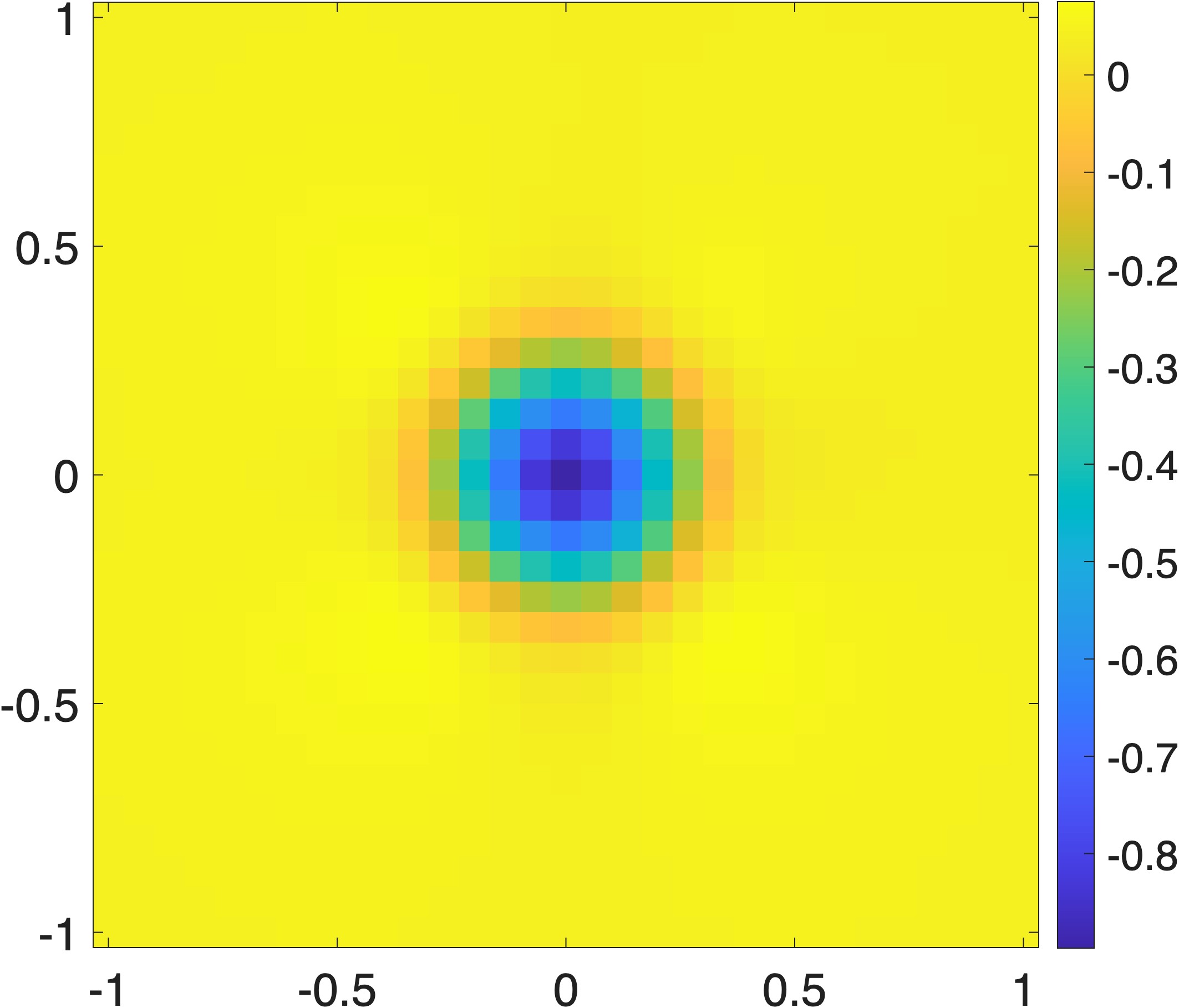}
}

\subfloat[$\frac{|u_{0,1}^{\rm true} - u_{0,1}^{\rm rec}|}{\|u_{0,1}^{\rm true}\|_{L^{\infty}(\Omega)}}$\label{fig:case1_err_u1}]{
    \includegraphics[width=0.25\textwidth]{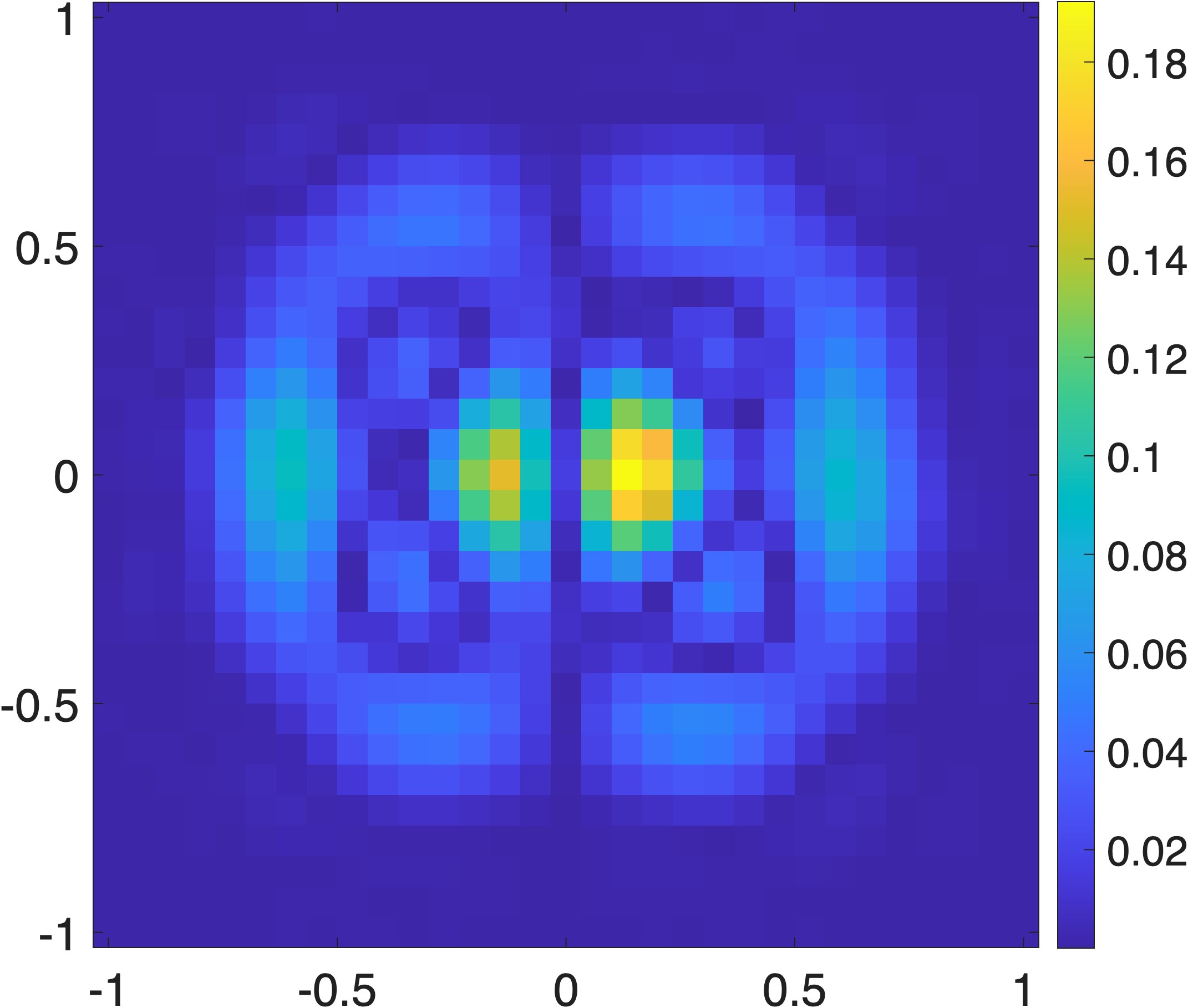}
}
\quad
\subfloat[$\frac{|u_{0,2}^{\rm true} - u_{0,2}^{\rm rec}|}{\|u_{0,2}^{\rm true}\|_{L^{\infty}(\Omega)}}$\label{fig:case1_err_u2}]{
    \includegraphics[width=0.25\textwidth]{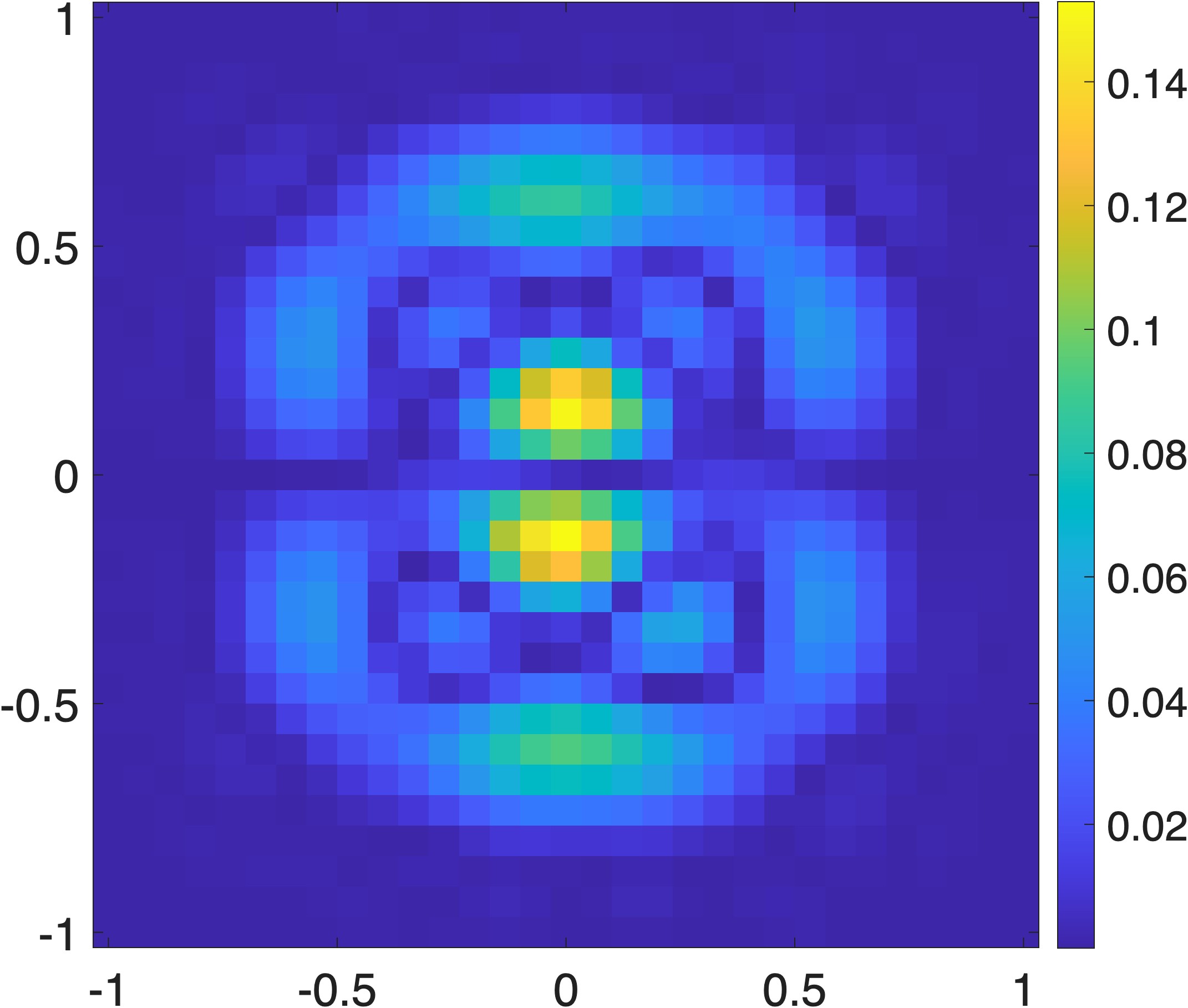}
}
\quad
\subfloat[$\frac{|p_{0}^{\rm true} - p_{0}^{\rm rec}|}{\|p_{0}^{\rm true}\|_{L^{\infty}(\Omega)}}$ \label{fig:case1_err_p0}]{
    \includegraphics[width=0.25\textwidth]{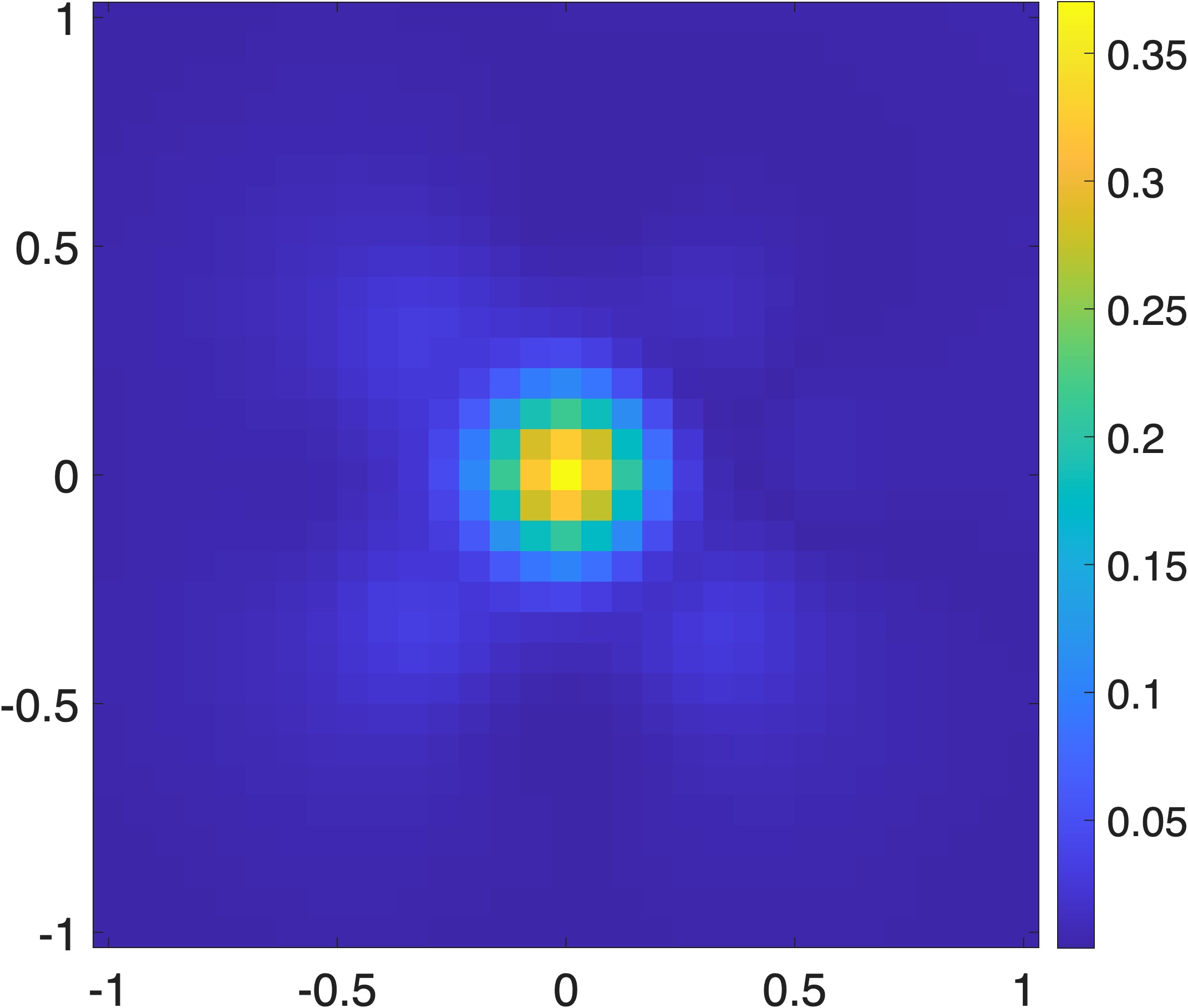}
}

\caption{Test 1. First row:  true velocity components and initial pressure. Second row: reconstructed velocity components and initial pressure. Third row: pointwise relative reconstruction errors.}
\label{fig:case1_all}
\end{figure}

To monitor the convergence of the Picard iteration, we record both the relative changes of the iterates and the relative residuals of the reduced system at each step. More precisely, for the $(k+1)$-st iterate, we define
\begin{equation} \label{5.7}
\operatorname{relU}^{(k)}
:=
\frac{\|\bU^{(k+1)}-\bU^{(k)}\|_{[L^2(\Omega)]^{2(N+1)}}}
{\|\bU^{(k+1)}\|_{[L^2(\Omega)]^{2(N+1)}}},
\qquad
\operatorname{relP}^{(k)}
:=
\frac{\|\bP^{(k+1)}-\bP^{(k)}\|_{[L^2(\Omega)]^{(N+1)}}}
{\|\bP^{(k+1)}\|_{[L^2(\Omega)]^{(N+1)}}}.
\end{equation}
In addition, we compute the relative residuals
\begin{equation}\label{5.8}
\operatorname{ResU}^{(k)}
:=
\frac{
\left\|
\mathcal L_U^{(k+1)}-\mathcal R_U^{(k+1)}
\right\|_{[L^2(\Omega)]^{2(N+1)}}
}{
\left\|\mathcal L_U^{(k+1)}\right\|_{[L^2(\Omega)]^{2(N+1)}}
+
\left\|\mathcal R_U^{(k+1)}\right\|_{[L^2(\Omega)]^{2(N+1)}}
},
\end{equation}
and
\begin{equation}\label{5.9}
\operatorname{ResP}^{(k)}
:=
\frac{
\left\|
\mathcal L_P^{(k+1)}-\mathcal R_P^{(k+1)}
\right\|_{[L^2(\Omega)]^{N+1}}
}{
\left\|\mathcal L_P^{(k+1)}\right\|_{[L^2(\Omega)]^{N+1}}
+
\left\|\mathcal R_P^{(k+1)}\right\|_{[L^2(\Omega)]^{N+1}}
},
\end{equation}
where
\begin{align*}
\mathcal L_U^{(k+1)}
&:=
\mu S_N(\Delta \bU^{(k+1)}),
&
\mathcal R_U^{(k+1)}
&:=
R_N\bU^{(k+1)}
+
\mathbf F^N(\x,\bU^{(k+1)},\nabla \bU^{(k+1)})
+
S_N(\nabla \bP^{(k+1)}),
\\
\mathcal L_P^{(k+1)}
&:=
S_N(\Delta \bP^{(k+1)}),
&
\mathcal R_P^{(k+1)}
&:=
G^N(\x,\bU^{(k+1)},\nabla \bU^{(k+1)}).
\end{align*}

We interpret $\operatorname{ResU}^{(k)}$ and $\operatorname{ResP}^{(k)}$ as relative residuals: their numerators measure the mismatch in the reduced equations, while their denominators scale this mismatch by the size of the corresponding terms. As a result, both quantities are dimensionless and remain comparable across different test cases and iterations. Figure~\ref{fig:case1_hist} displays their evolution and illustrates both the stabilization of the iterates and the extent to which the reduced system is satisfied.

\begin{figure}[ht]
\centering

\subfloat[Relative errors $\mathrm{relU}$ and $\mathrm{relP}$ versus the iteration number.\label{fig:case1_rel_hist}]{
    \includegraphics[width=0.42\textwidth]{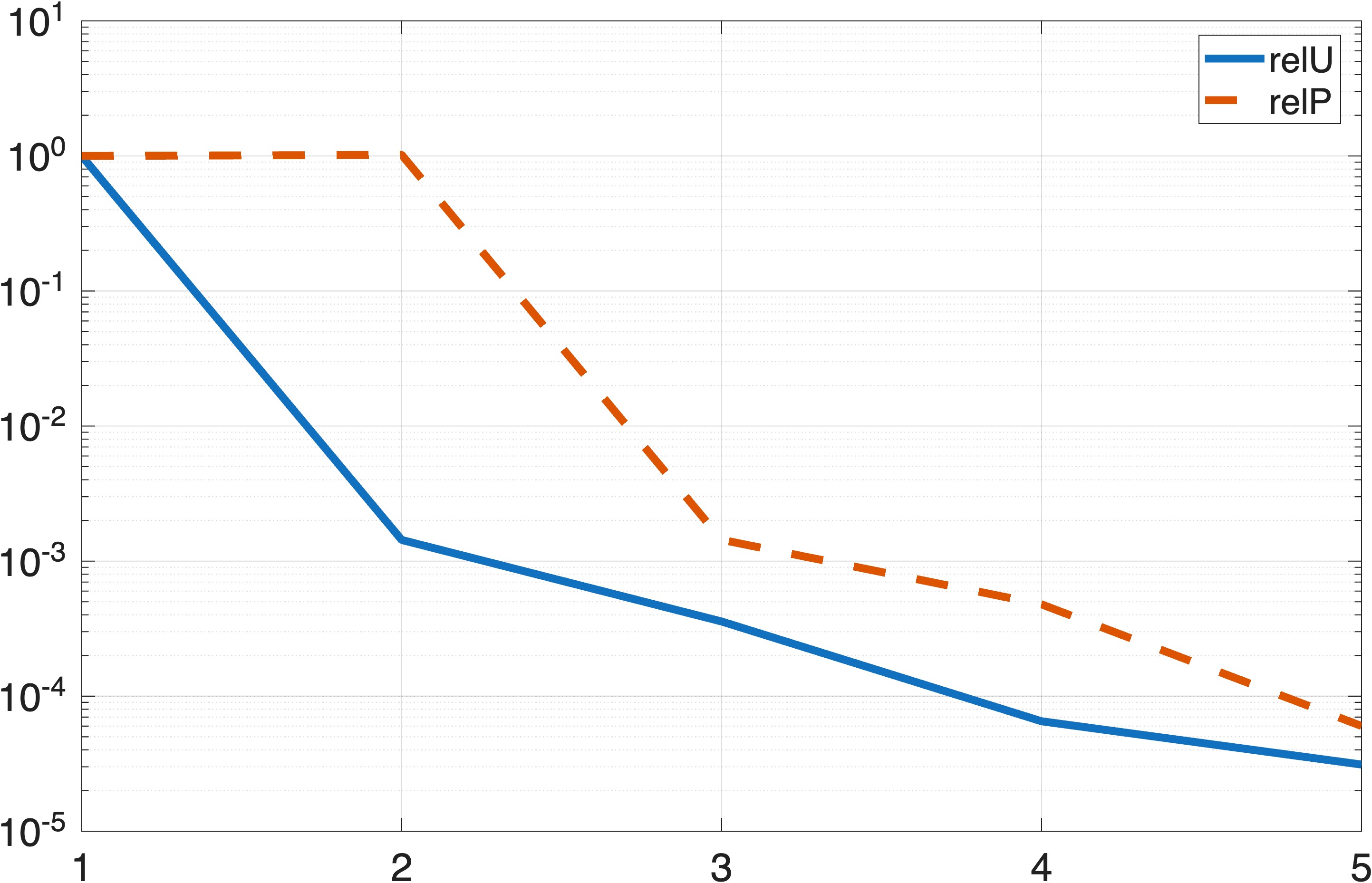}
}
\quad
\subfloat[Residuals $\mathrm{ResU}$ and $\mathrm{ResP}$ versus the iteration number.\label{fig:case1_res_hist}]{
    \includegraphics[width=0.42\textwidth]{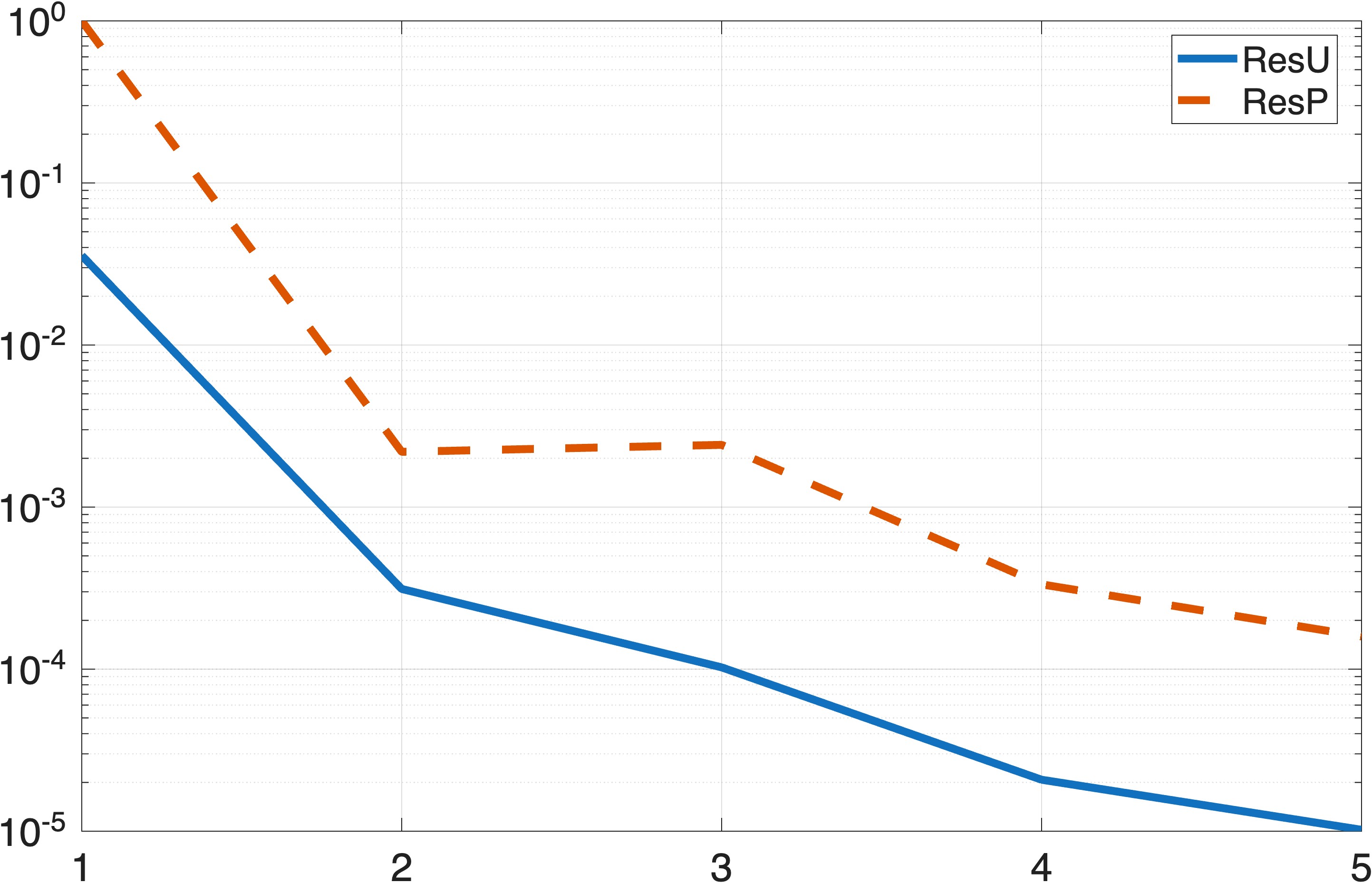}
}

\caption{Test 1. Convergence history of the Picard iteration. The left image shows the relative changes $\operatorname{relU}^{(k)}$ and $\operatorname{relP}^{(k)}$, while the right image shows the relative residuals $\operatorname{ResU}^{(k)}$ and $\operatorname{ResP}^{(k)}$.}
\label{fig:case1_hist}
\end{figure}

Figure~\ref{fig:case1_all} shows that the proposed method reconstructs the main structures of the true initial velocity and pressure with good accuracy. In particular, the reconstructed profiles of $u_{0,1}$, $u_{0,2}$, and $p_0$ agree well with the corresponding true ones, and the error plots indicate that the discrepancies remain localized and moderate throughout the computational domain. Here, the relative $L^2$ error of a reconstructed quantity $q^{\rm rec}$ with true value $q^{\rm true}$ is defined by
\[
\frac{\|q^{\rm rec}-q^{\rm true}\|_{L^2(\Omega)}}{\|q^{\rm true}\|_{L^2(\Omega)}}\times 100\%.
\]
With this definition, the relative $L^2$ errors are $14.47097\%$ for $u_{0,1}$, $13.33345\%$ for $u_{0,2}$, and $23.91453\%$ for $p_0$. Although the pressure reconstruction is slightly less accurate than the velocity reconstruction, the overall quality remains satisfactory for this first test. In addition, Figure~\ref{fig:case1_hist} demonstrates a stable and rapidly convergent iterative process: both the relative changes of the iterates and the relative residuals decrease steadily as the iteration proceeds, indicating that the Picard scheme stabilizes quickly and that the computed solution satisfies the reduced system with good accuracy.

{\bf Test 2.}
We next present the second numerical test. In this example, the body force $\bff=(f_1,f_2)$ is defined by
\[
f_1(x,y)=e^{-\frac{(x/0.30)^2+(y/0.30)^2}{1.0}},
\qquad
f_2(x,y)=e^{-\frac{(x/0.22)^2+\big((y+0.20)/0.28\big)^2}{1.0}}.
\]
The force field is normalized so that its maximum absolute component equals $1$.

The true initial velocity $\bu_0^{\rm true}(x,y)=\big(u_{0,1}^{\rm true}(x,y),u_{0,2}^{\rm true}(x,y)\big)$ is defined by
\begin{multline*}
u_{0,1}^{\rm true}(x,y)
=
0.08\Big[
-4y(1-y^2)(1-x^2)^2
\Big(
e^{-\frac{(x+0.25)^2+(y-0.05)^2}{0.14}}
-0.85\,e^{-\frac{(x-0.20)^2+(y+0.18)^2}{0.10}}
\Big) \\
+(1-x^2)^2(1-y^2)^2
\Big(
-\frac{2(y-0.05)}{0.14}e^{-\frac{(x+0.25)^2+(y-0.05)^2}{0.14}}
+0.85\,\frac{2(y+0.18)}{0.10}e^{-\frac{(x-0.20)^2+(y+0.18)^2}{0.10}}
\Big)
\Big],
\end{multline*}
and
\begin{multline*}
u_{0,2}^{\rm true}(x,y)
=
-0.08\Big[
-4x(1-x^2)(1-y^2)^2
\Big(
e^{-\frac{(x+0.25)^2+(y-0.05)^2}{0.14}}
-0.85\,e^{-\frac{(x-0.20)^2+(y+0.18)^2}{0.10}}
\Big) \\
+(1-x^2)^2(1-y^2)^2
\Big(
-\frac{2(x+0.25)}{0.14}e^{-\frac{(x+0.25)^2+(y-0.05)^2}{0.14}}
+0.85\,\frac{2(x-0.20)}{0.10}e^{-\frac{(x-0.20)^2+(y+0.18)^2}{0.10}}
\Big)
\Big].
\end{multline*}
This field is also normalized so that its maximum absolute component equals $1$. 
It is easy to verify that the initial velocity field $\bu_0^{\rm true}(x,y)=\bigl(u_{0,1}^{\rm true}(x,y),\,u_{0,2}^{\rm true}(x,y)\bigr)$ satisfies the incompressibility condition $\nabla\cdot \bu_0^{\rm true}=0$ in $\Omega$.
The true and reconstructed initial velocity components and pressure for Test~2 are displayed in Figure~\ref{fig:case2_all}, together with the corresponding pointwise relative reconstruction errors.

\begin{figure}[ht]
\centering

\subfloat[$u_{0,1}^{\rm true}$\label{fig:case2_true_u1}]{
    \includegraphics[width=0.25\textwidth]{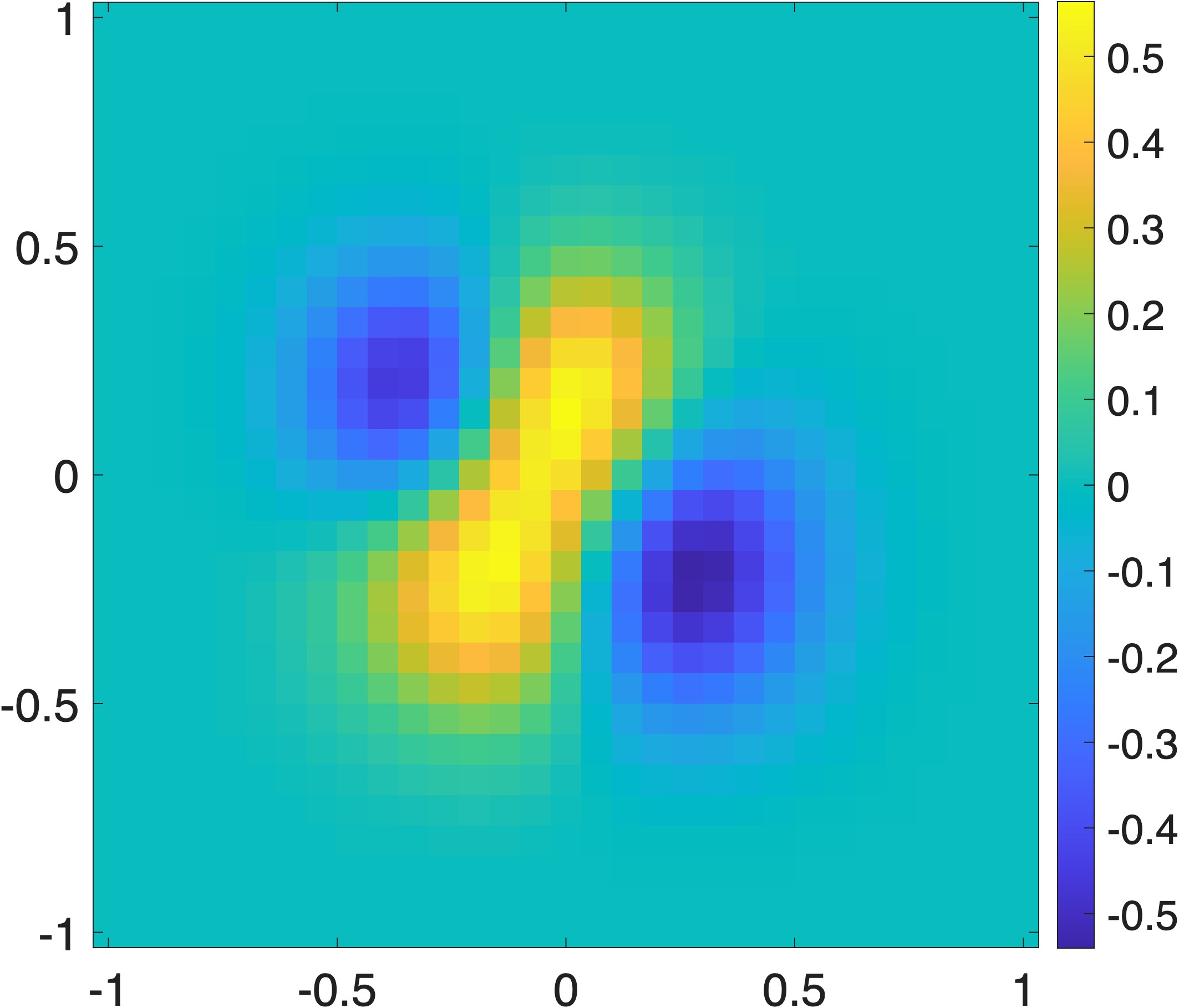}
}
\quad
\subfloat[$u_{0,2}^{\rm true}$\label{fig:case2_true_u2}]{
    \includegraphics[width=0.25\textwidth]{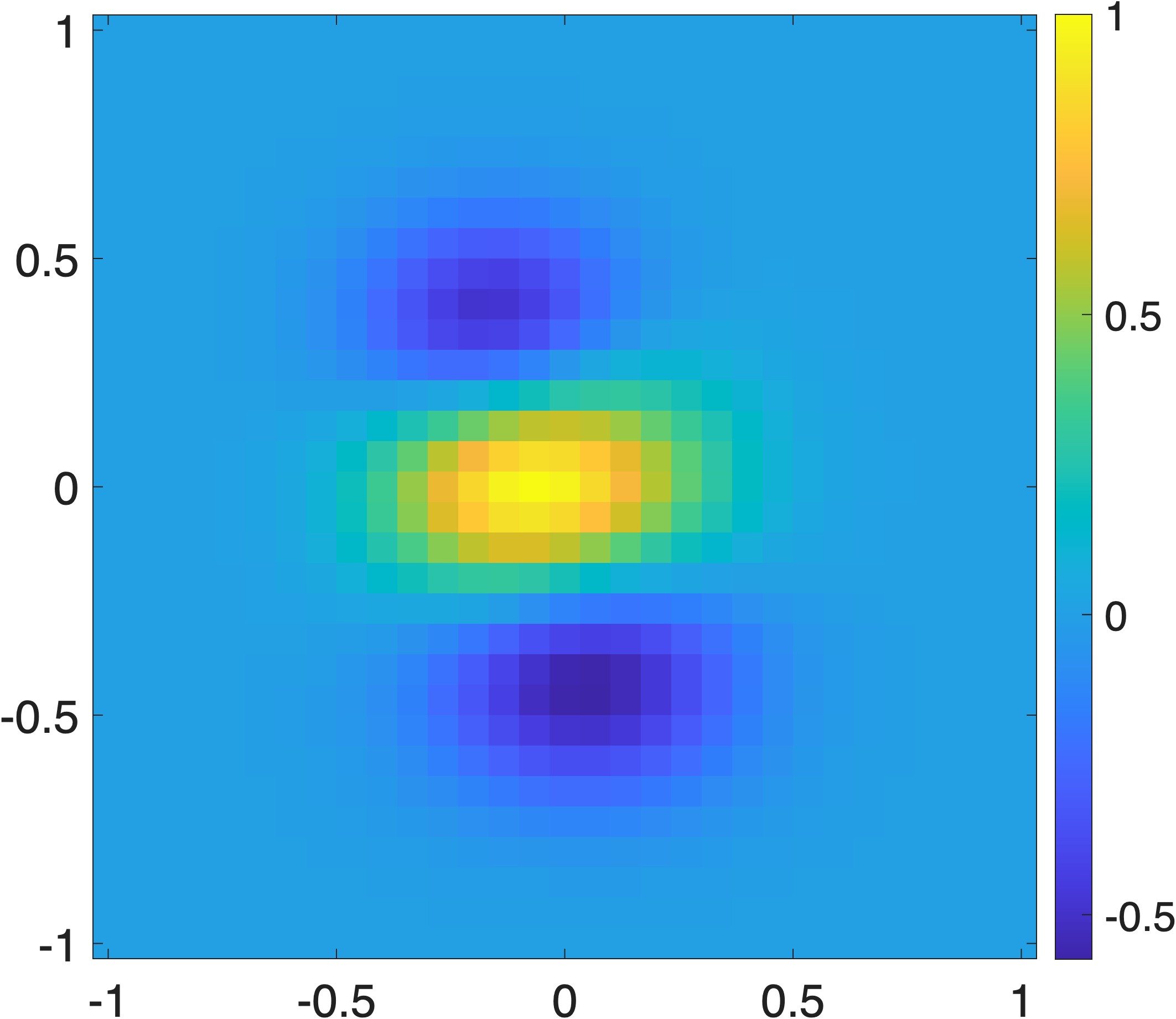}
}
\quad
\subfloat[$p_{0}^{\rm true}$\label{fig:case2_true_p0}]{
    \includegraphics[width=0.25\textwidth]{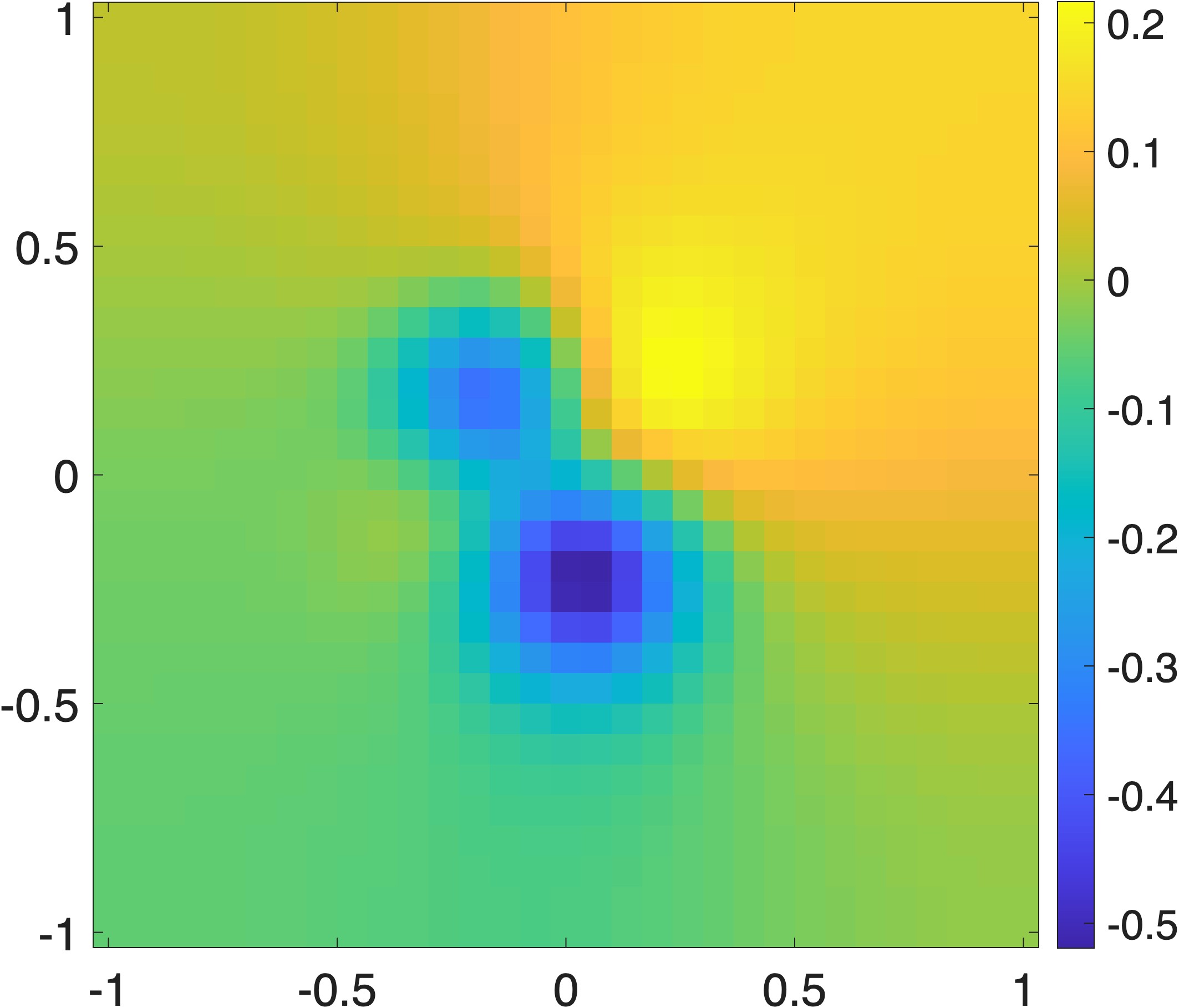}
}

\subfloat[$u_{0,1}^{\rm rec}$\label{fig:case2_rec_u1}]{
    \includegraphics[width=0.25\textwidth]{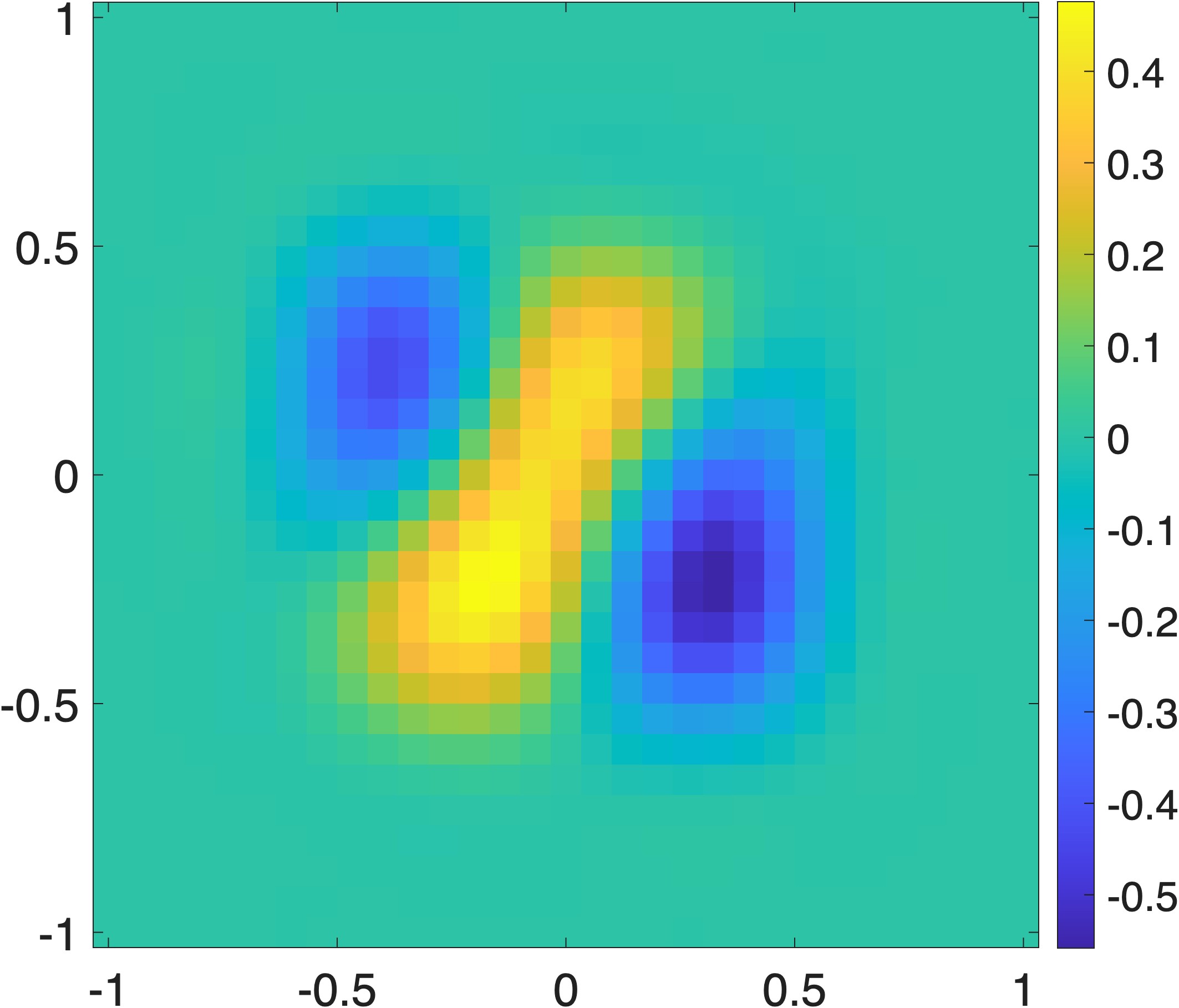}
}
\quad
\subfloat[$u_{0,2}^{\rm rec}$\label{fig:case2_rec_u2}]{
    \includegraphics[width=0.25\textwidth]{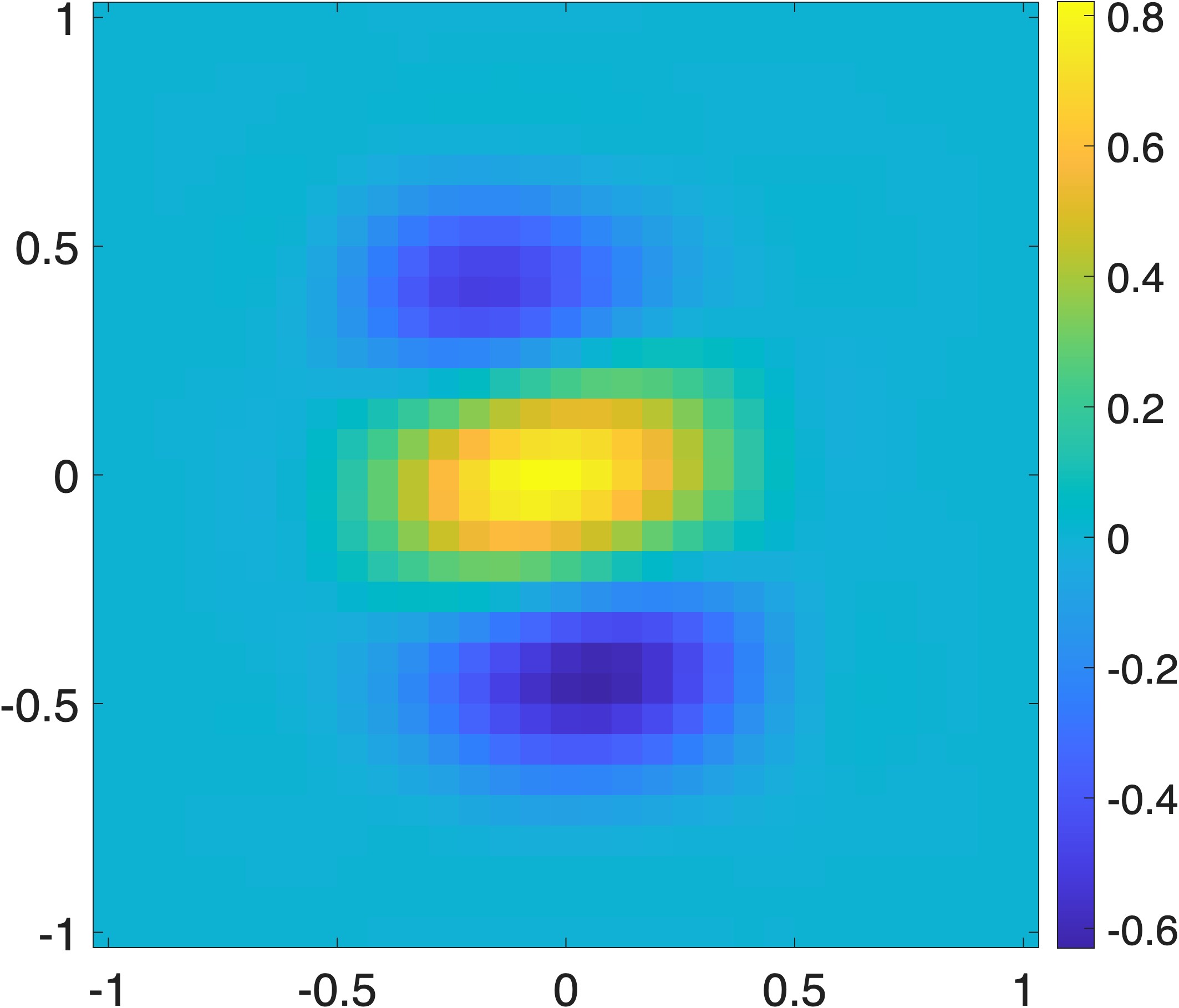}
}
\quad
\subfloat[$p_{0}^{\rm rec}$\label{fig:case2_rec_p0}]{
    \includegraphics[width=0.25\textwidth]{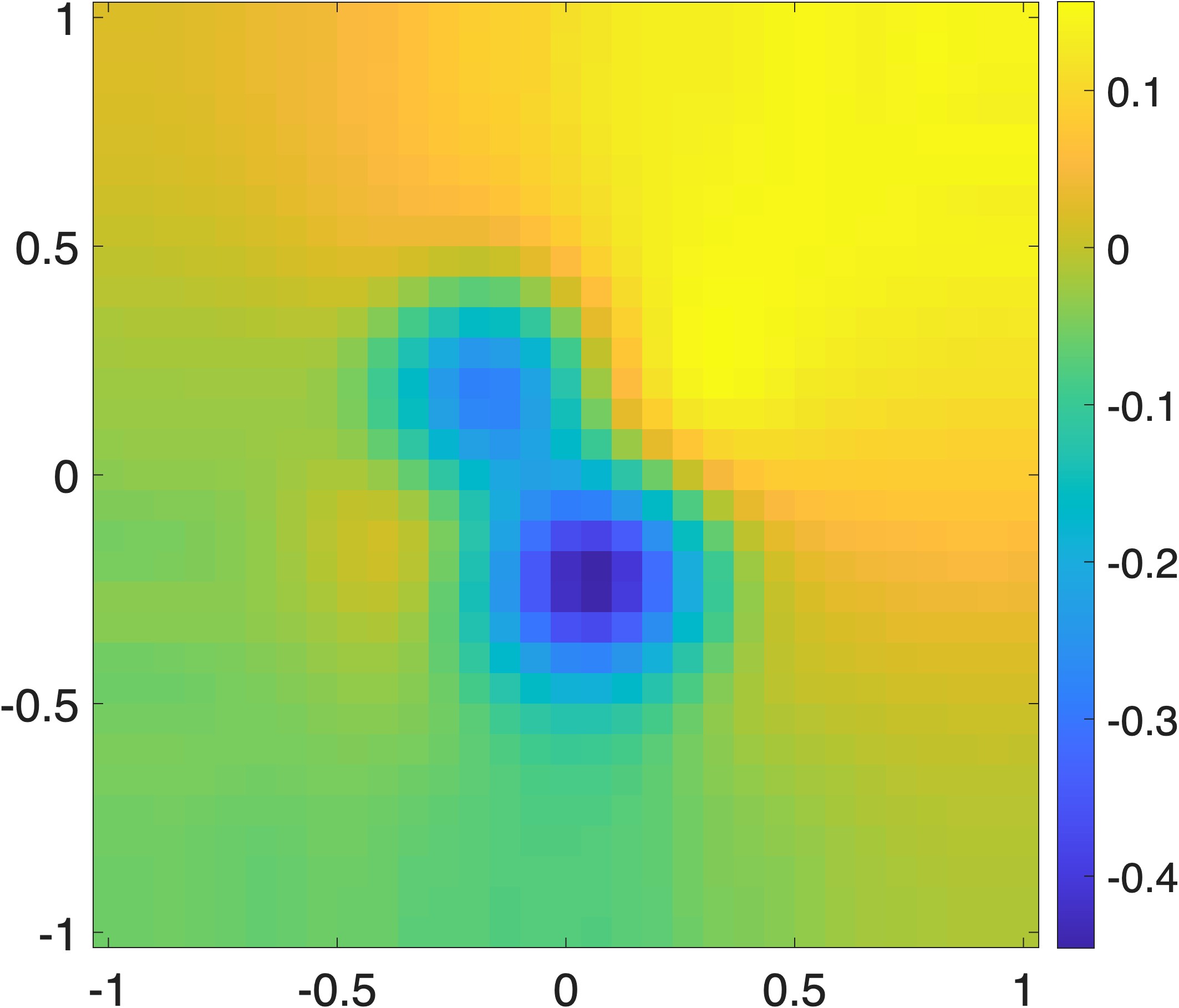}
}

\subfloat[$\frac{|u_{0,1}^{\rm true}-u_{0,1}^{\rm rec}|}{\|u_{0,1}^{\rm true}\|_{L^\infty(\Omega)}}$\label{fig:case2_err_u1}]{
    \includegraphics[width=0.25\textwidth]{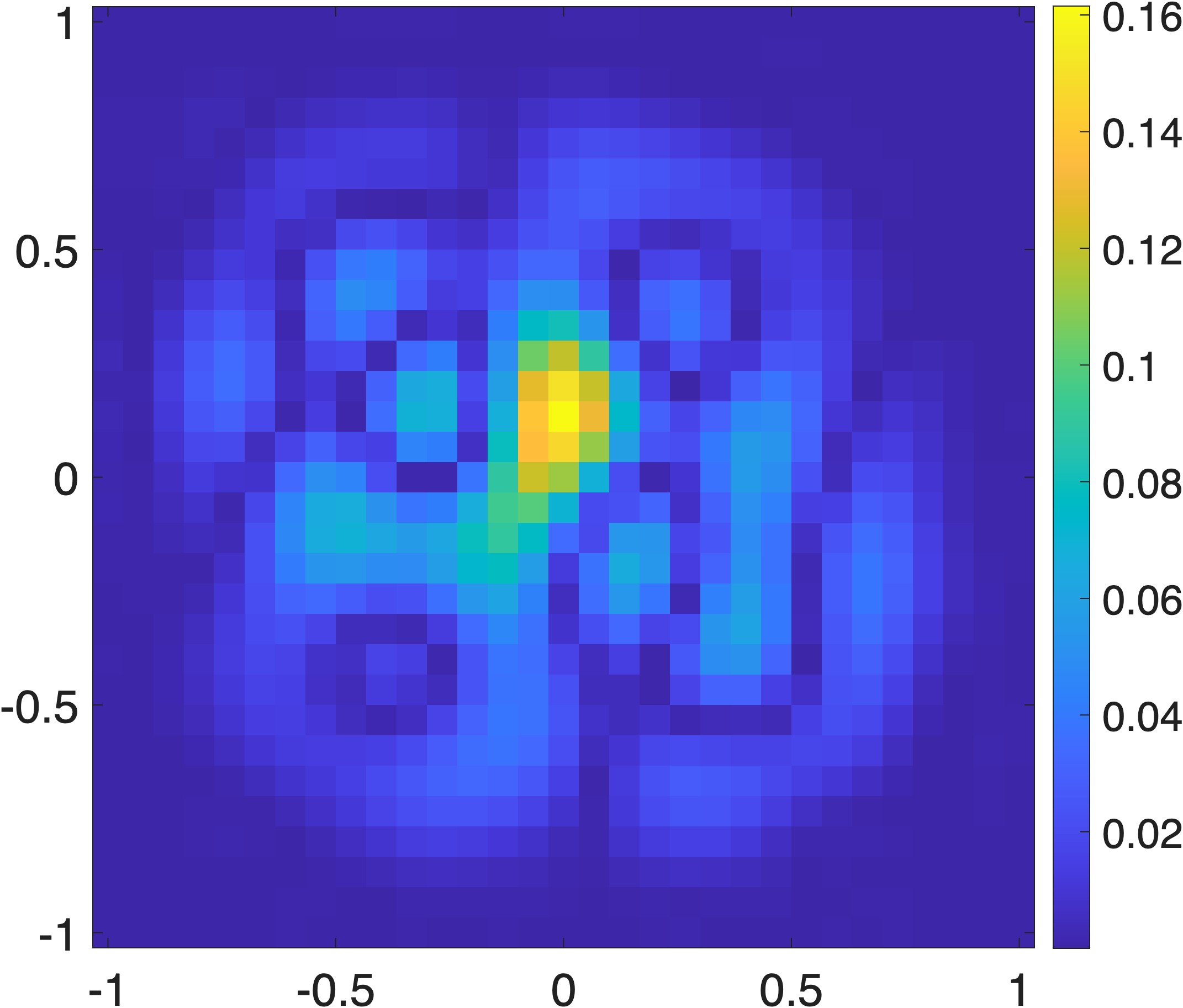}
}
\quad
\subfloat[$\frac{|u_{0,2}^{\rm true}-u_{0,2}^{\rm rec}|}{\|u_{0,2}^{\rm true}\|_{L^\infty(\Omega)}}$\label{fig:case2_err_u2}]{
    \includegraphics[width=0.25\textwidth]{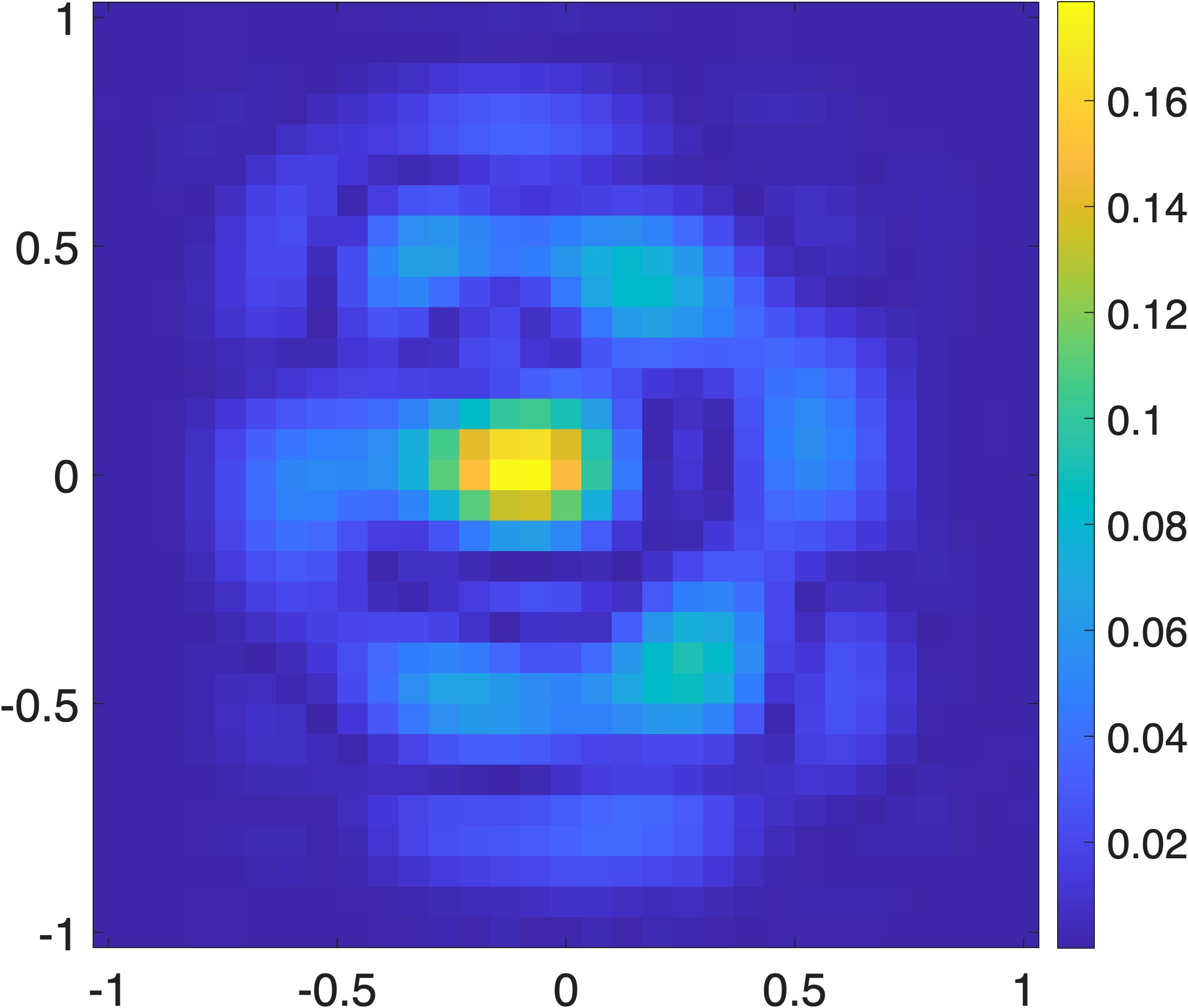}
}
\quad
\subfloat[$\frac{|p_{0}^{\rm true}-p_{0}^{\rm rec}|}{\|p_{0}^{\rm true}\|_{L^\infty}(\Omega)}$\label{fig:case2_err_p0}]{
    \includegraphics[width=0.25\textwidth]{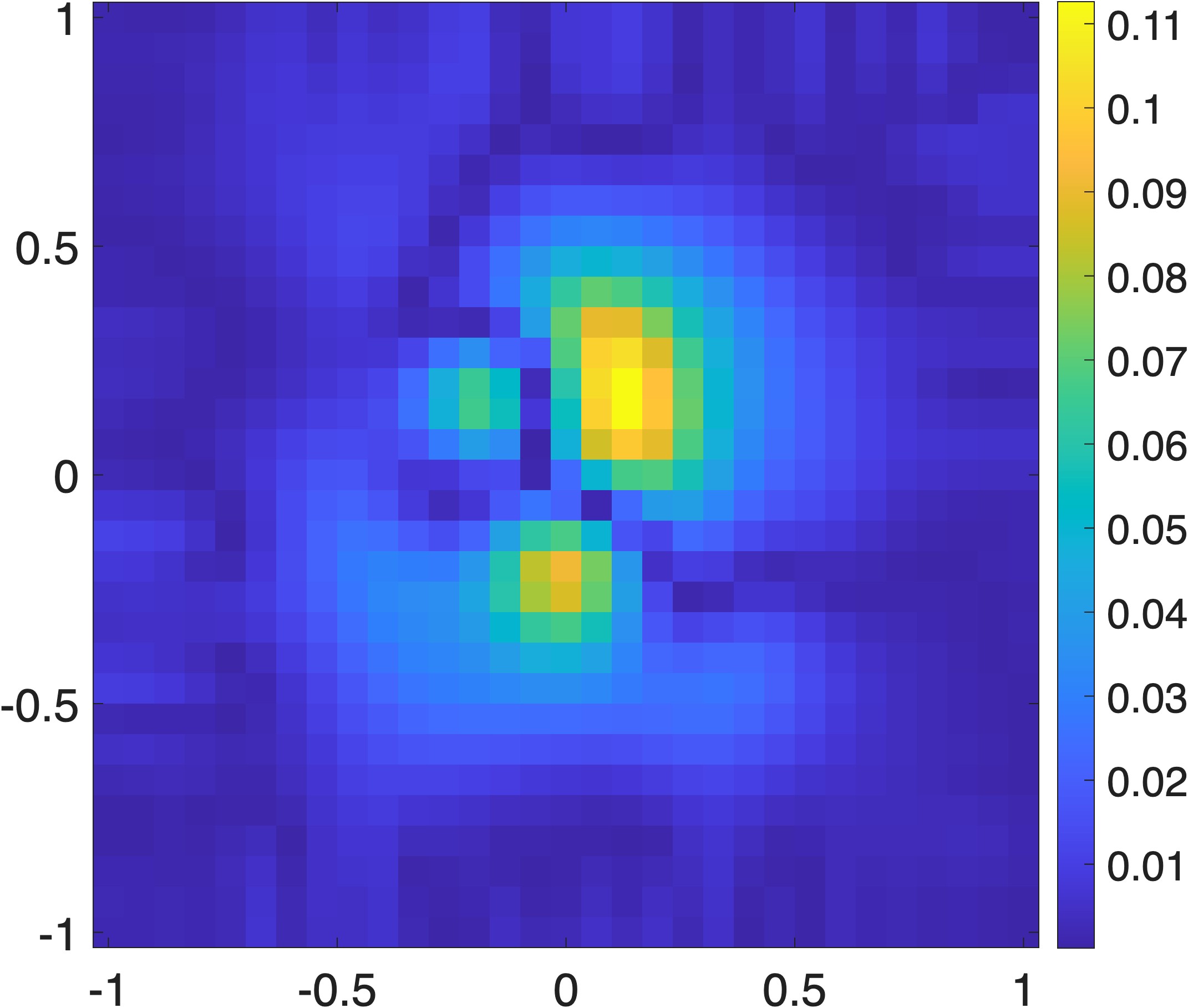}
}

\caption{Test 2. First row: true initial velocity components and pressure. Second row: reconstructed initial velocity components and pressure. Third row: pointwise relative reconstruction errors.}
\label{fig:case2_all}
\end{figure}

As in Test~1, we monitor the quantities $\operatorname{relU}^{(k)}$, $\operatorname{relP}^{(k)}$, $\operatorname{ResU}^{(k)}$, and $\operatorname{ResP}^{(k)}$ defined in \eqref{5.7}, \eqref{5.8}, and \eqref{5.9}; their values are displayed in Figure \ref{fig:case2_hist} and are used to assess both the stabilization of the iterates and the accuracy with which the reduced system is satisfied.

\begin{figure}[ht]
\centering

\subfloat[$\operatorname{relU}^{(k)}$ and $\operatorname{relP}^{(k)}$\label{fig:case2_rel_hist}]{
    \includegraphics[width=0.42\textwidth]{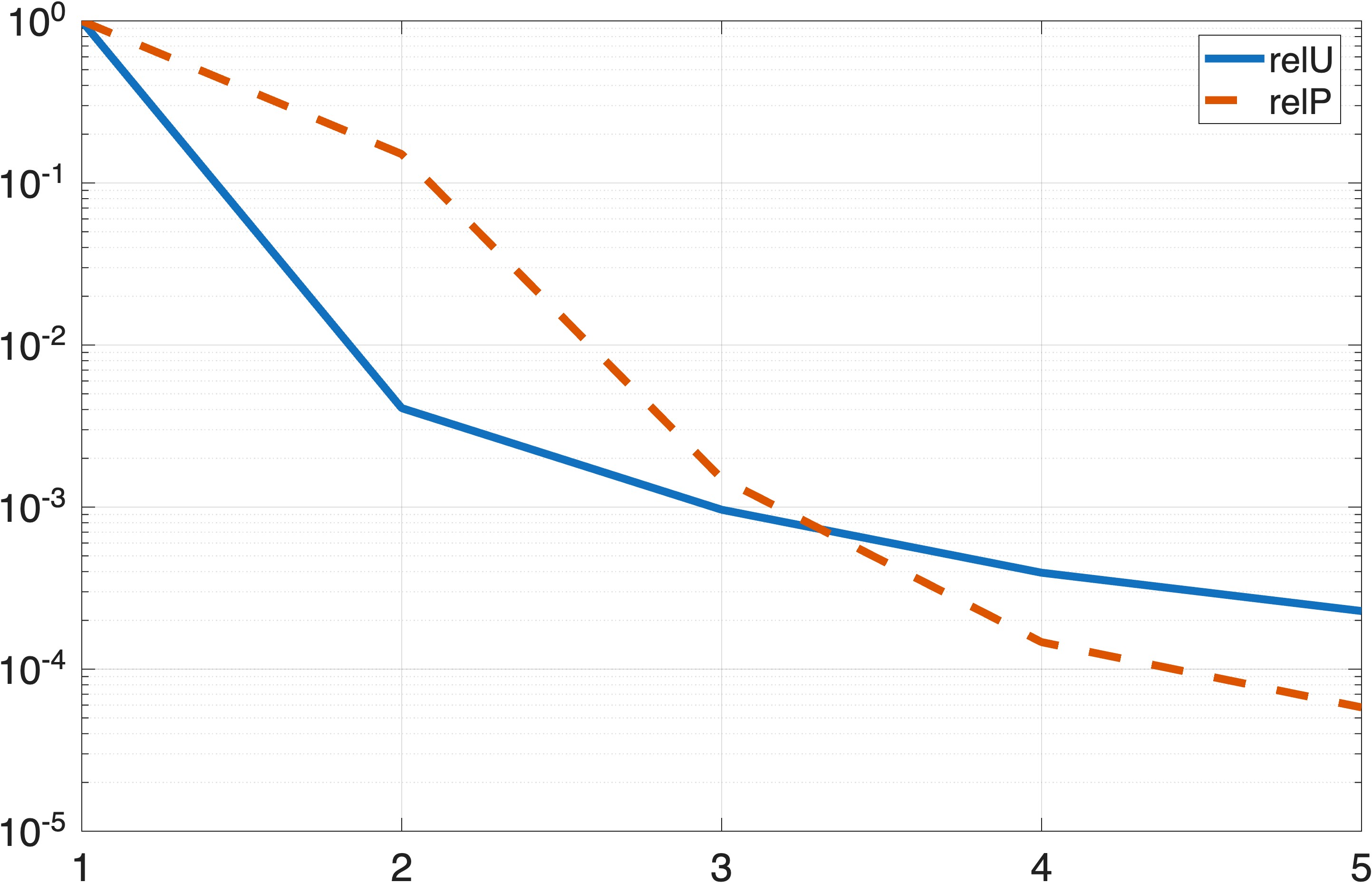}
}
\quad
\subfloat[$\operatorname{ResU}^{(k)}$ and $\operatorname{ResP}^{(k)}$\label{fig:case2_res_hist}]{
    \includegraphics[width=0.42\textwidth]{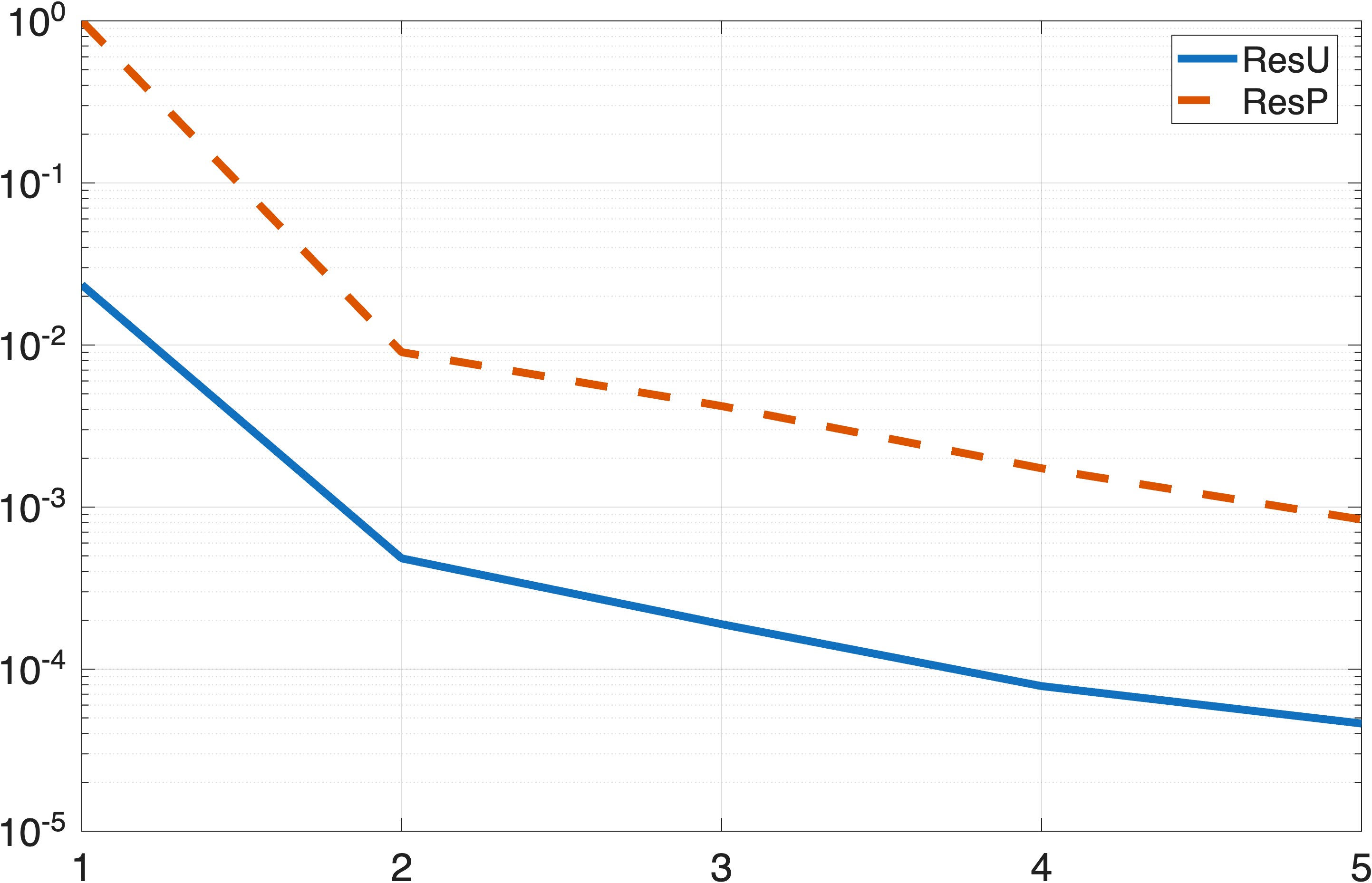}
}

\caption{Test 2. Convergence history of the Picard iteration. The left image shows the relative changes $\operatorname{relU}^{(k)}$ and $\operatorname{relP}^{(k)}$, while the right image shows the relative residuals $\operatorname{ResU}^{(k)}$ and $\operatorname{ResP}^{(k)}$.}
\label{fig:case2_hist}
\end{figure}

Figure~\ref{fig:case2_all} shows that the proposed method again produces a satisfactory reconstruction of the initial velocity and pressure for Test~2. The reconstructed profiles capture the main geometric features of the true solution, including the locations and shapes of the dominant positive and negative regions, while the error plots indicate that the discrepancies remain moderate over the computational domain. Using the relative $L^2$ error $\|q^{\rm rec}-q^{\rm true}\|_{L^2(\Omega)}/\|q^{\rm true}\|_{L^2(\Omega)}\times 100\%$, the reconstruction errors are $17.64913\%$ for $u_{0,1}$, $15.74004\%$ for $u_{0,2}$, and $18.65730\%$ for $p_0$. These values confirm that the method maintains good accuracy even for this more complicated test. In addition, Figure~\ref{fig:case2_hist} shows a stable convergence behavior of the Picard iteration: both the relative changes of the iterates and the relative residuals decrease steadily, indicating that the iteration stabilizes reliably and that the computed solution satisfies the reduced system with good accuracy.

{\bf Test 3.}
We next present the third numerical test. In this example, the body force $\bff=(f_1,f_2)$ is defined by
\[
f_1(x,y)=e^{-\frac{\big((x+0.04)/0.28\big)^2+\big((y-0.02)/0.32\big)^2}{0.95}},
\qquad
f_2(x,y)=e^{-\frac{\big((x-0.03)/0.24\big)^2+\big((y+0.18)/0.30\big)^2}{0.90}}.
\]
The force field is normalized so that its maximum absolute component equals $1$.

The true initial velocity $\bu_0^{\rm true}(x,y)=\big(u_{0,1}^{\rm true}(x,y),u_{0,2}^{\rm true}(x,y)\big)$ is defined by
\begin{multline*}
u_{0,1}^{\rm true}(x,y)
=
0.08\Big[
\Big(
-4y(1-y^2)(1-x^2)^2
-\frac{2(y-0.35)}{0.06}(1-x^2)^2(1-y^2)^2
\Big)
e^{-\frac{(x+0.45)^2+(y-0.35)^2}{0.06}} \\
-0.85
\Big(
-4y(1-y^2)(1-x^2)^2
-\frac{2(y+0.35)}{0.05}(1-x^2)^2(1-y^2)^2
\Big)
e^{-\frac{(x-0.45)^2+(y+0.35)^2}{0.05}}
\Big],
\end{multline*}
and
\begin{multline*}
u_{0,2}^{\rm true}(x,y)
=
-0.08\Big[
\Big(
-4x(1-x^2)(1-y^2)^2
-\frac{2(x+0.45)}{0.06}(1-x^2)^2(1-y^2)^2
\Big)
e^{-\frac{(x+0.45)^2+(y-0.35)^2}{0.06}} \\
-0.85
\Big(
-4x(1-x^2)(1-y^2)^2
-\frac{2(x-0.45)}{0.05}(1-x^2)^2(1-y^2)^2
\Big)
e^{-\frac{(x-0.45)^2+(y+0.35)^2}{0.05}}
\Big].
\end{multline*}
This field is also normalized so that its maximum absolute component equals $1$. One can check that $\bu_0^{\rm true}(x,y)=\bigl(u_{0,1}^{\rm true}(x,y),\,u_{0,2}^{\rm true}(x,y)\bigr)$ is divergence-free, that is, $\nabla\cdot \bu_0^{\rm true}=0$ in $\Omega$.
The true and reconstructed initial velocity components and pressure for Test~3 are displayed in Figure~\ref{fig:case3_all}, together with the corresponding pointwise relative reconstruction errors.

\begin{figure}[ht]
\centering

\subfloat[$u_{0,1}^{\rm true}$\label{fig:case3_true_u1}]{
    \includegraphics[width=0.25\textwidth]{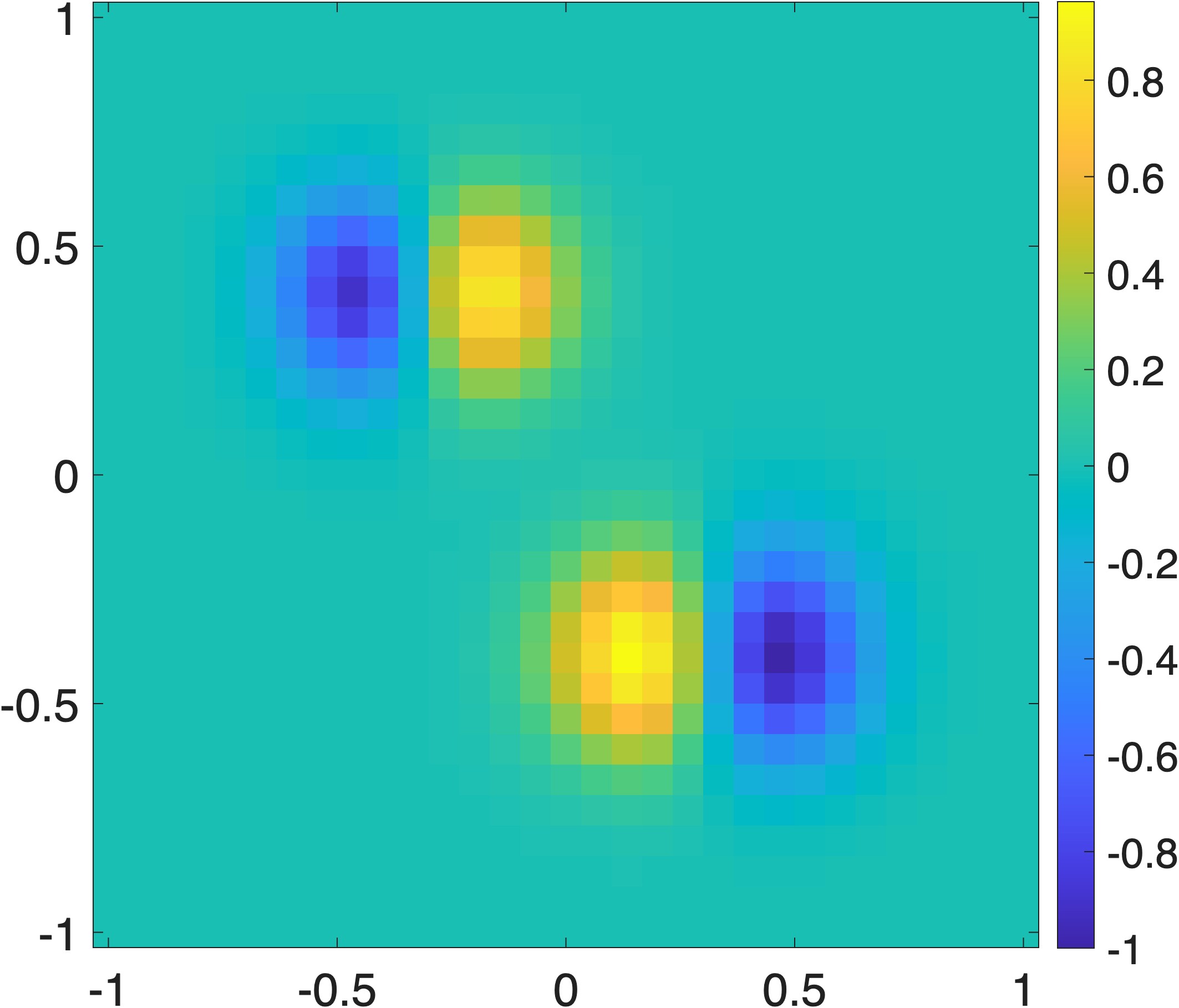}
}
\quad
\subfloat[$u_{0,2}^{\rm true}$\label{fig:case3_true_u2}]{
    \includegraphics[width=0.25\textwidth]{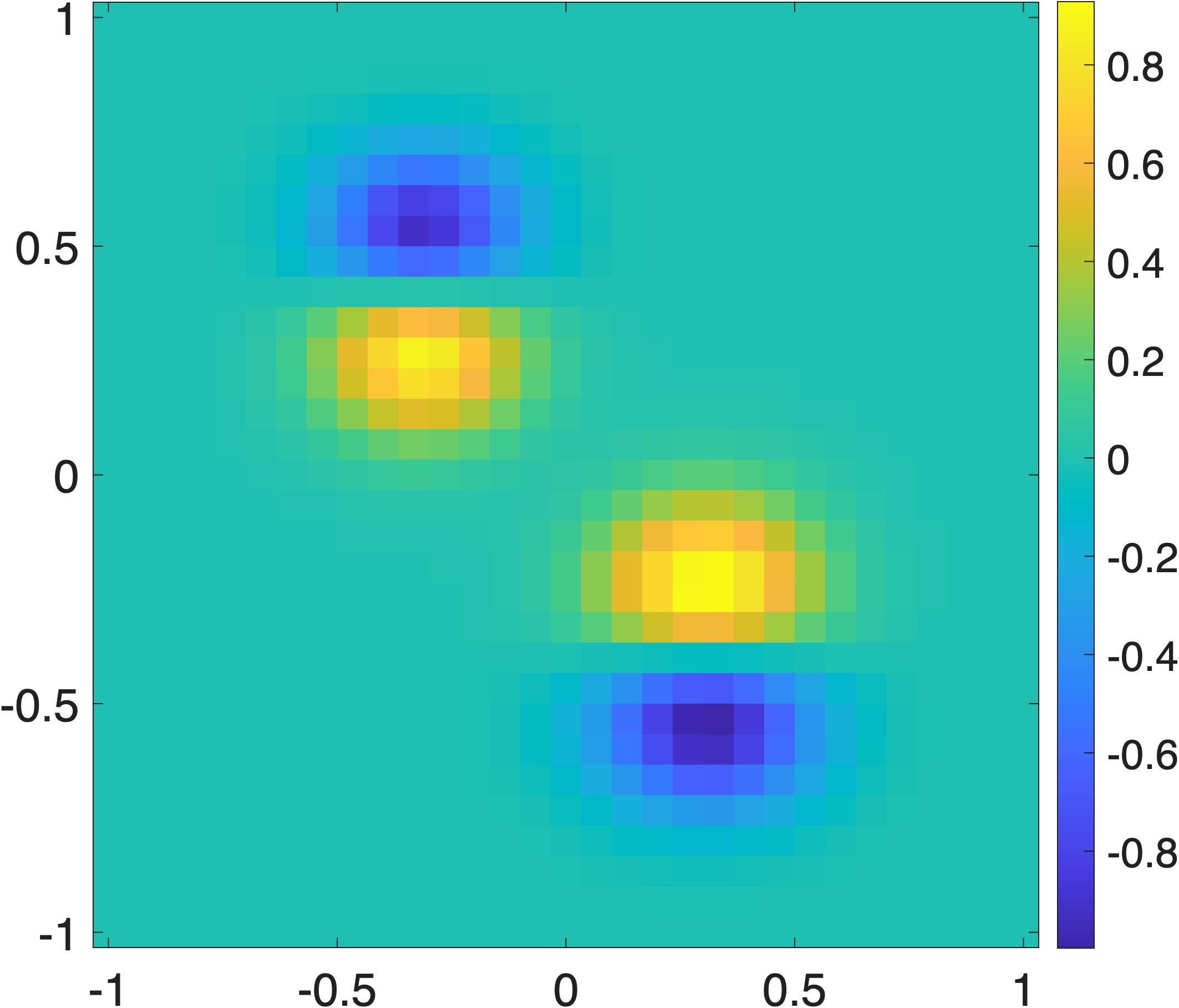}
}
\quad
\subfloat[$p_{0}^{\rm true}$\label{fig:case3_true_p0}]{
    \includegraphics[width=0.25\textwidth]{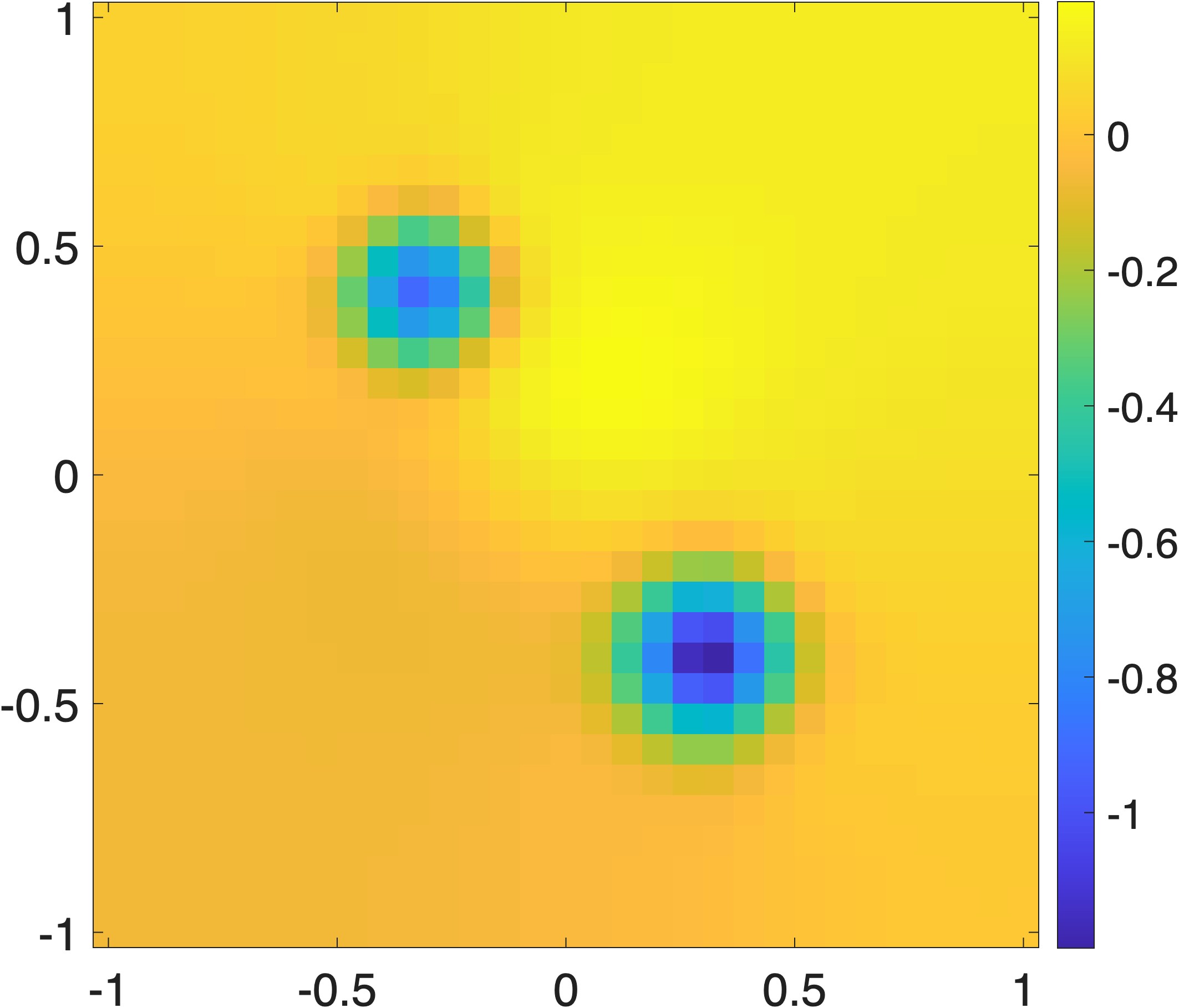}
}

\subfloat[$u_{0,1}^{\rm rec}$\label{fig:case3_rec_u1}]{
    \includegraphics[width=0.25\textwidth]{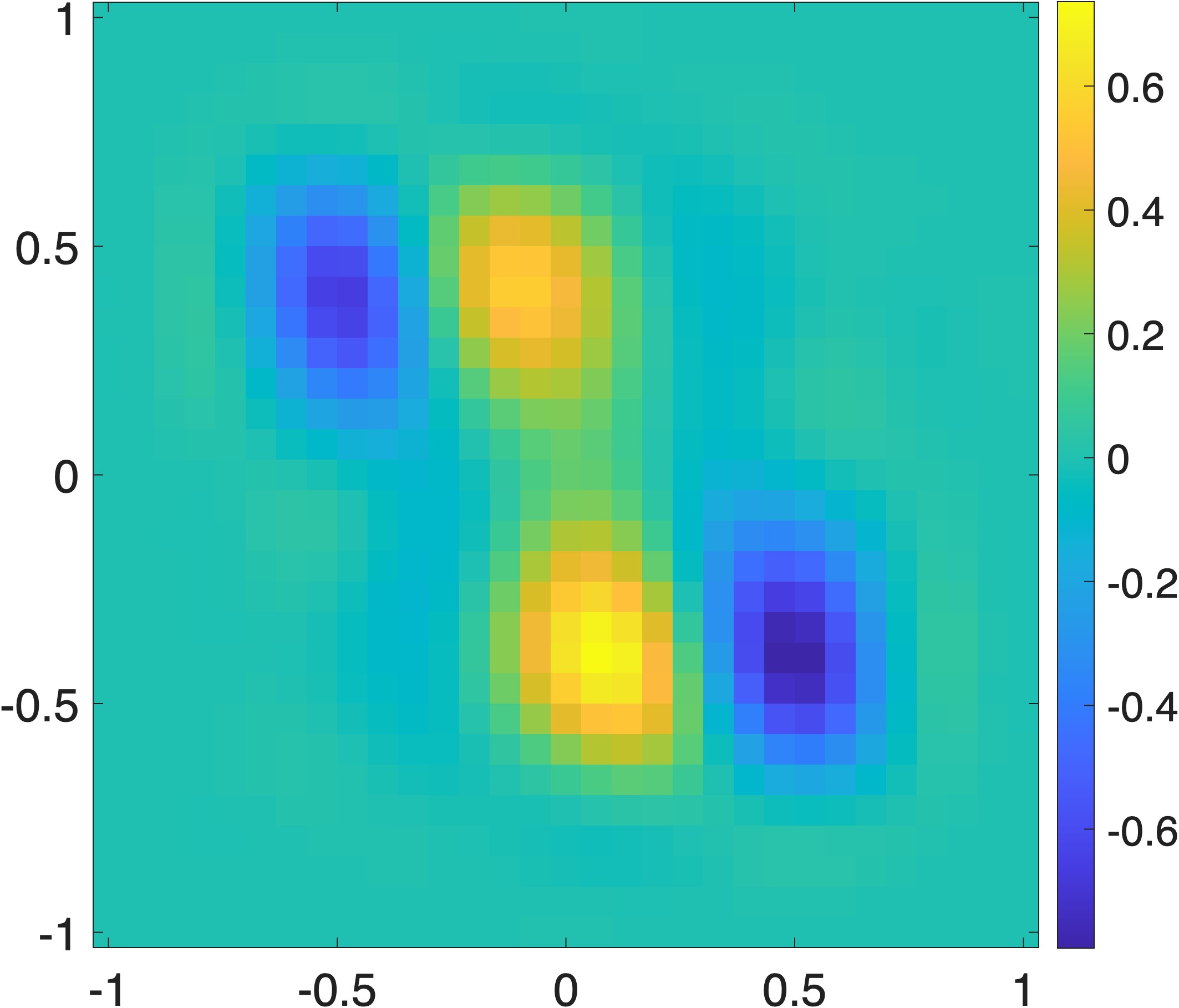}
}
\quad
\subfloat[$u_{0,2}^{\rm rec}$\label{fig:case3_rec_u2}]{
    \includegraphics[width=0.25\textwidth]{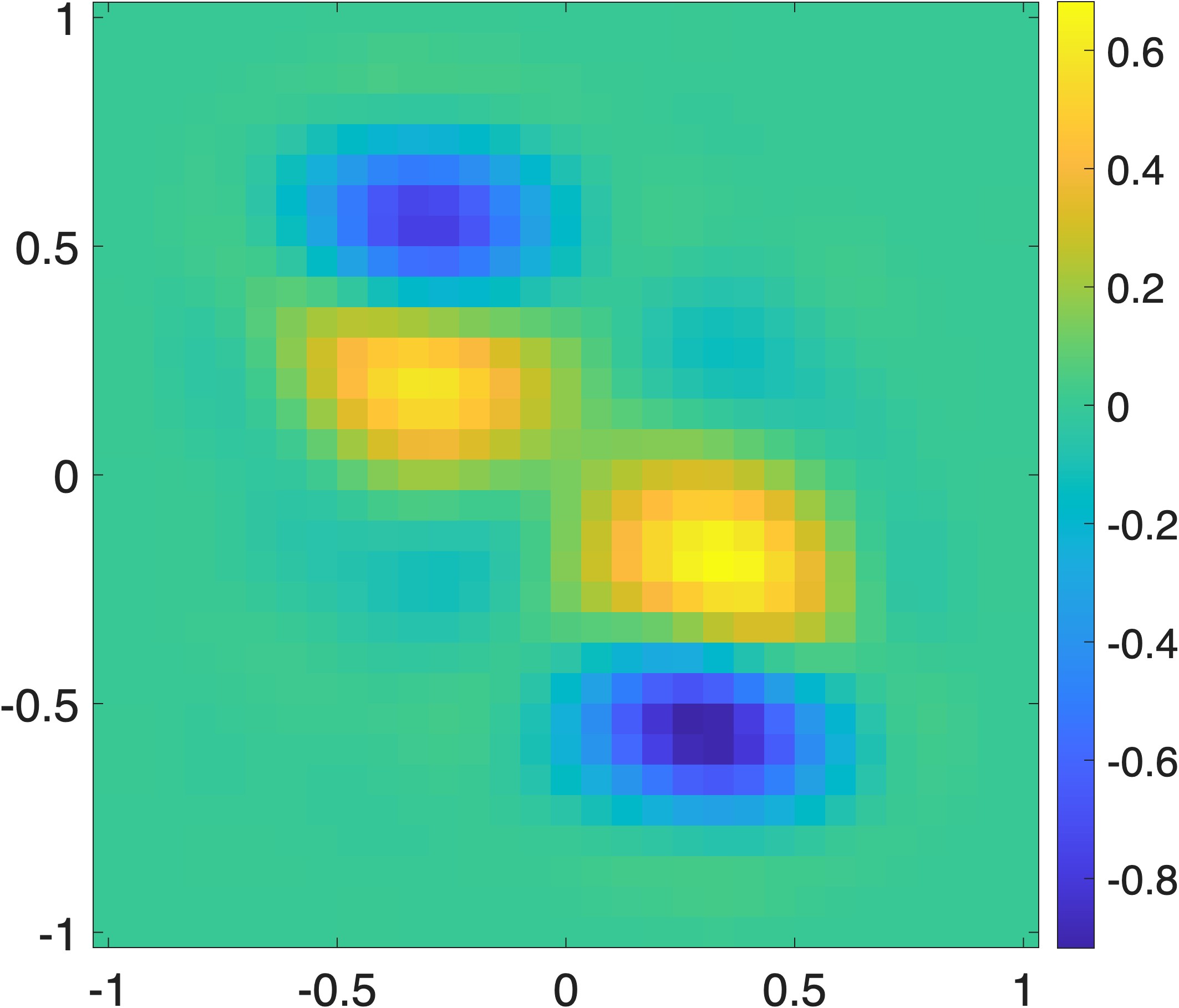}
}
\quad
\subfloat[$p_{0}^{\rm rec}$\label{fig:case3_rec_p0}]{
    \includegraphics[width=0.25\textwidth]{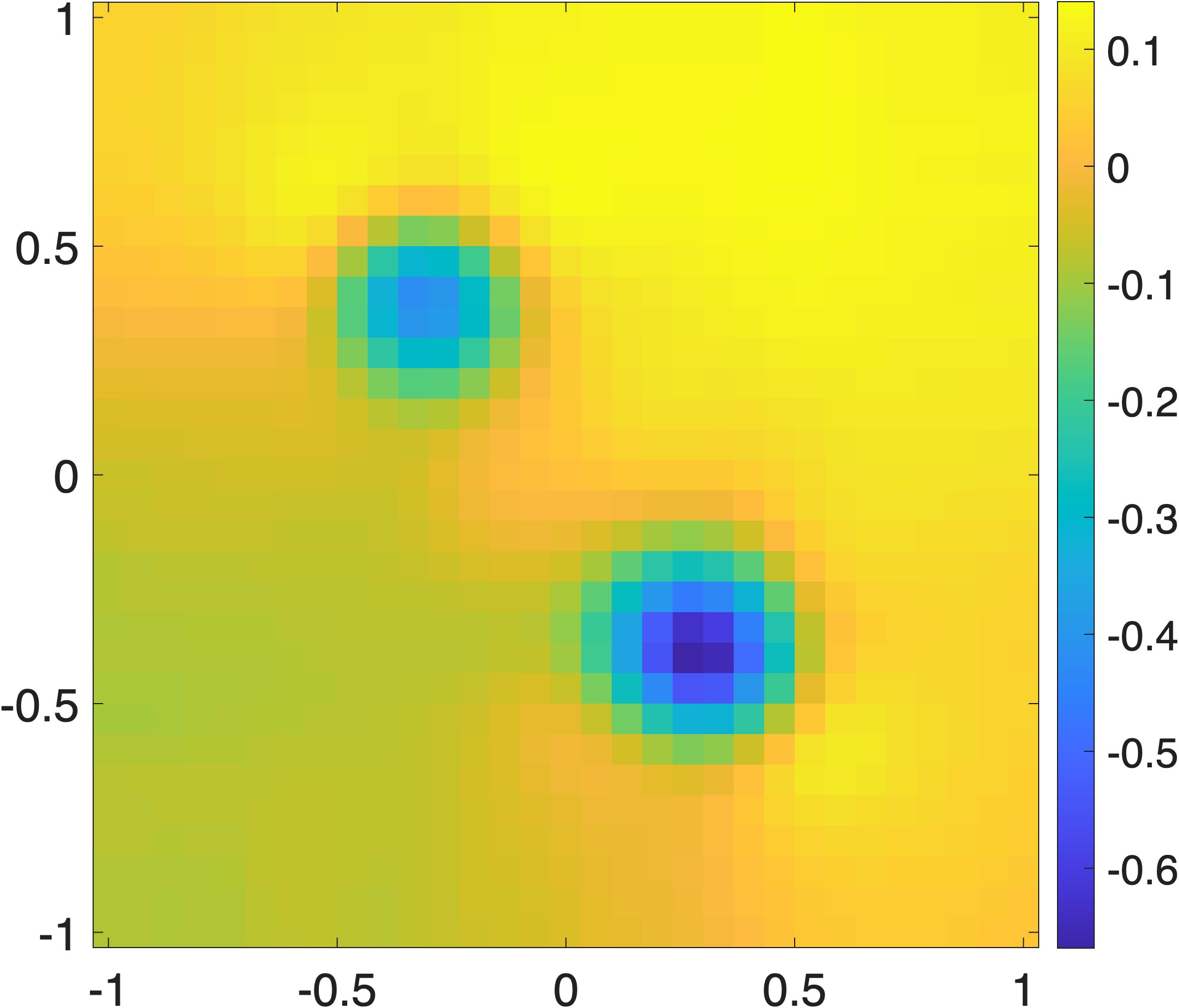}
}

\subfloat[$\frac{|u_{0,1}^{\rm true}-u_{0,1}^{\rm rec}|}{\|u_{0,1}^{\rm true}\|_{L^\infty(\Omega)}}$\label{fig:case3_err_u1}]{
    \includegraphics[width=0.25\textwidth]{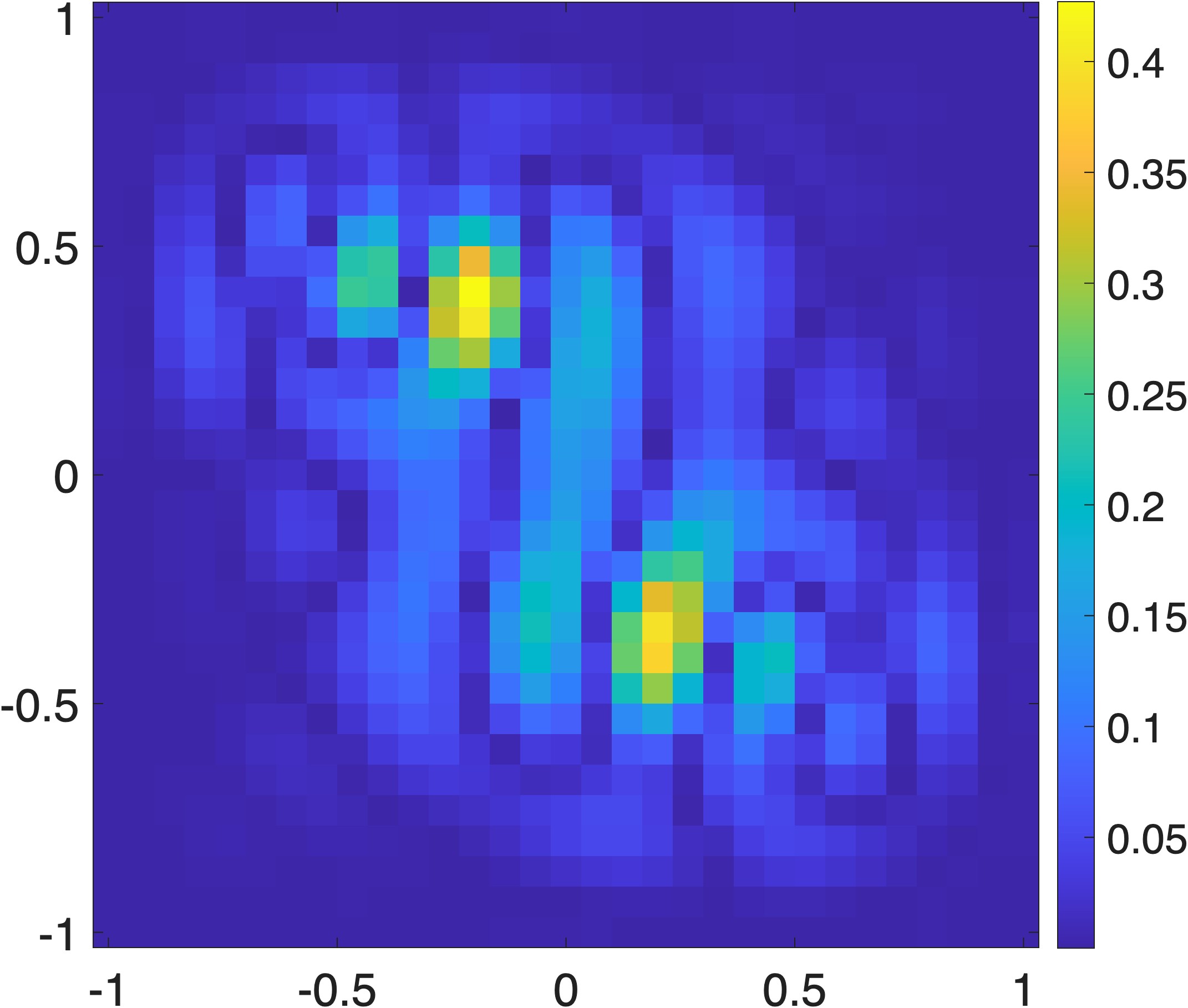}
}
\quad
\subfloat[$\frac{|u_{0,2}^{\rm true}-u_{0,2}^{\rm rec}|}{\|u_{0,2}^{\rm true}\|_{L^\infty(\Omega)}}$\label{fig:case3_err_u2}]{
    \includegraphics[width=0.25\textwidth]{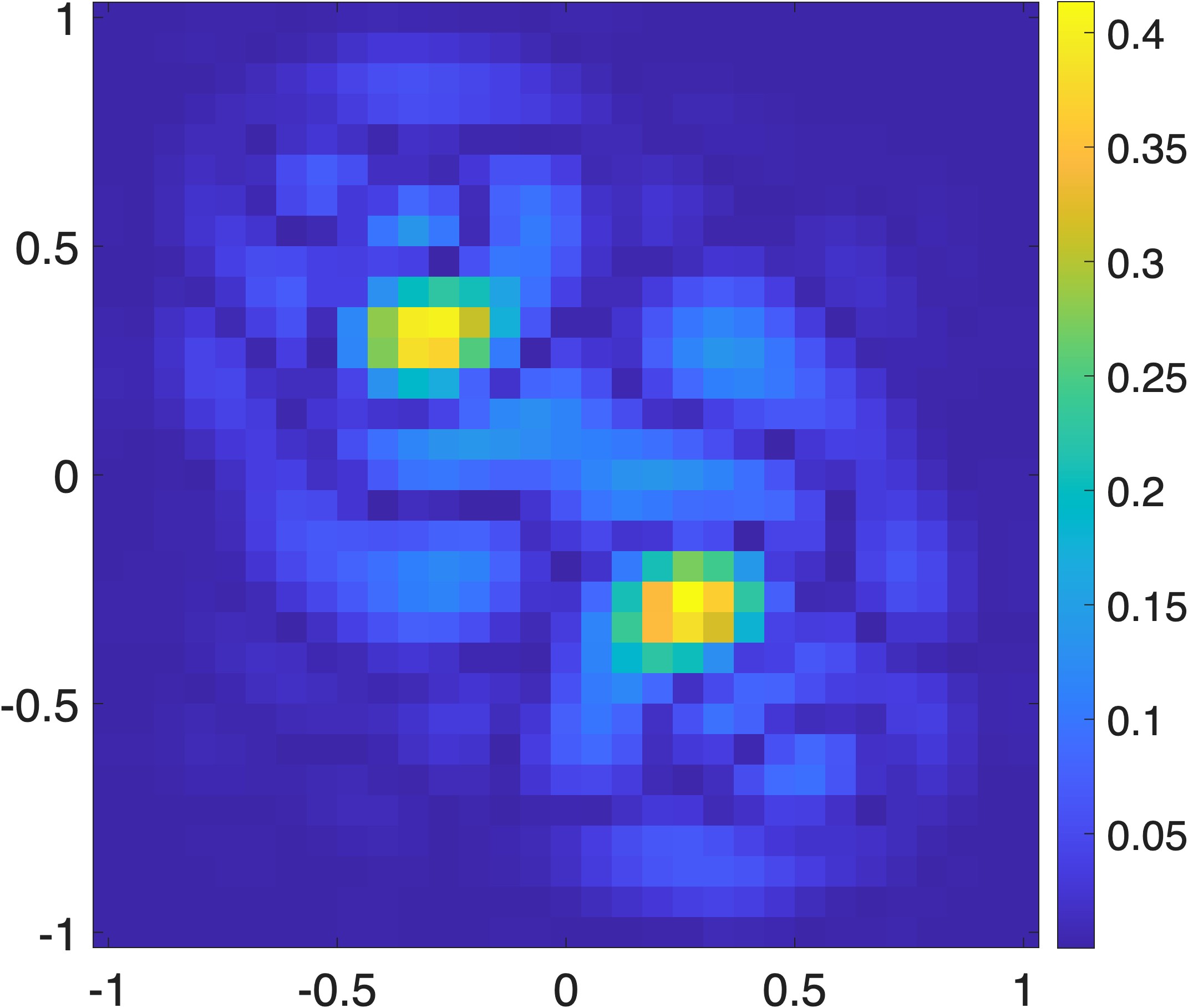}
}
\quad
\subfloat[$\frac{|p_{0}^{\rm true}-p_{0}^{\rm rec}|}{\|p_{0}^{\rm true}\|_{L^\infty(\Omega)}}$\label{fig:case3_err_p0}]{
    \includegraphics[width=0.25\textwidth]{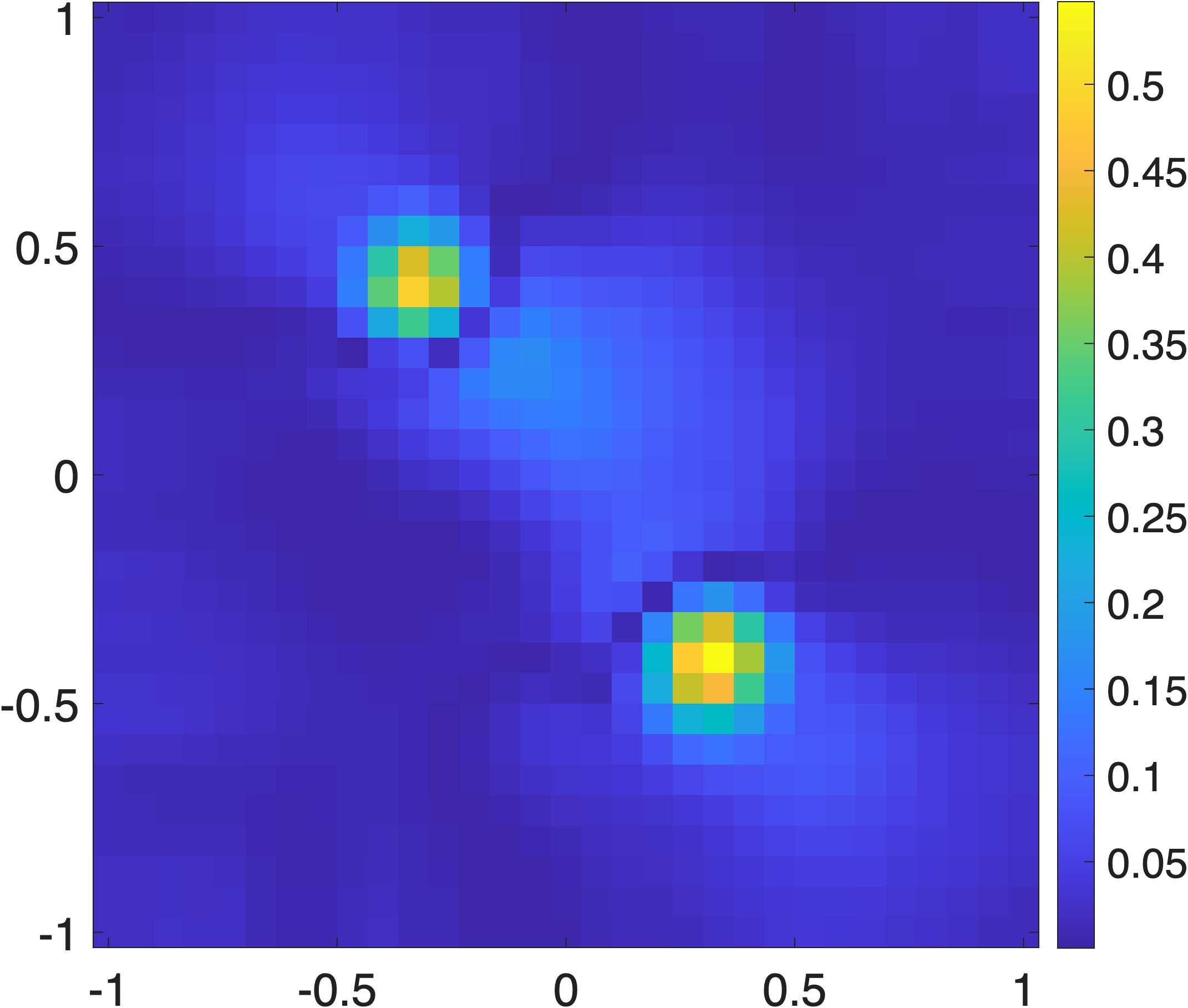}
}

\caption{Test 3. First row: true initial velocity components and pressure. Second row: reconstructed initial velocity components and pressure. Third row: pointwise relative reconstruction errors.}
\label{fig:case3_all}
\end{figure}

As in Test~1, we monitor the quantities $\operatorname{relU}^{(k)}$, $\operatorname{relP}^{(k)}$, $\operatorname{ResU}^{(k)}$, and $\operatorname{ResP}^{(k)}$ defined in \eqref{5.7}--\eqref{5.9}. The corresponding results for Test~3 are shown in Figure~\ref{fig:case3_hist}.

\begin{figure}[ht]
\centering

\subfloat[$\operatorname{relU}^{(k)}$ and $\operatorname{relP}^{(k)}$\label{fig:case3_rel_hist}]{
    \includegraphics[width=0.42\textwidth]{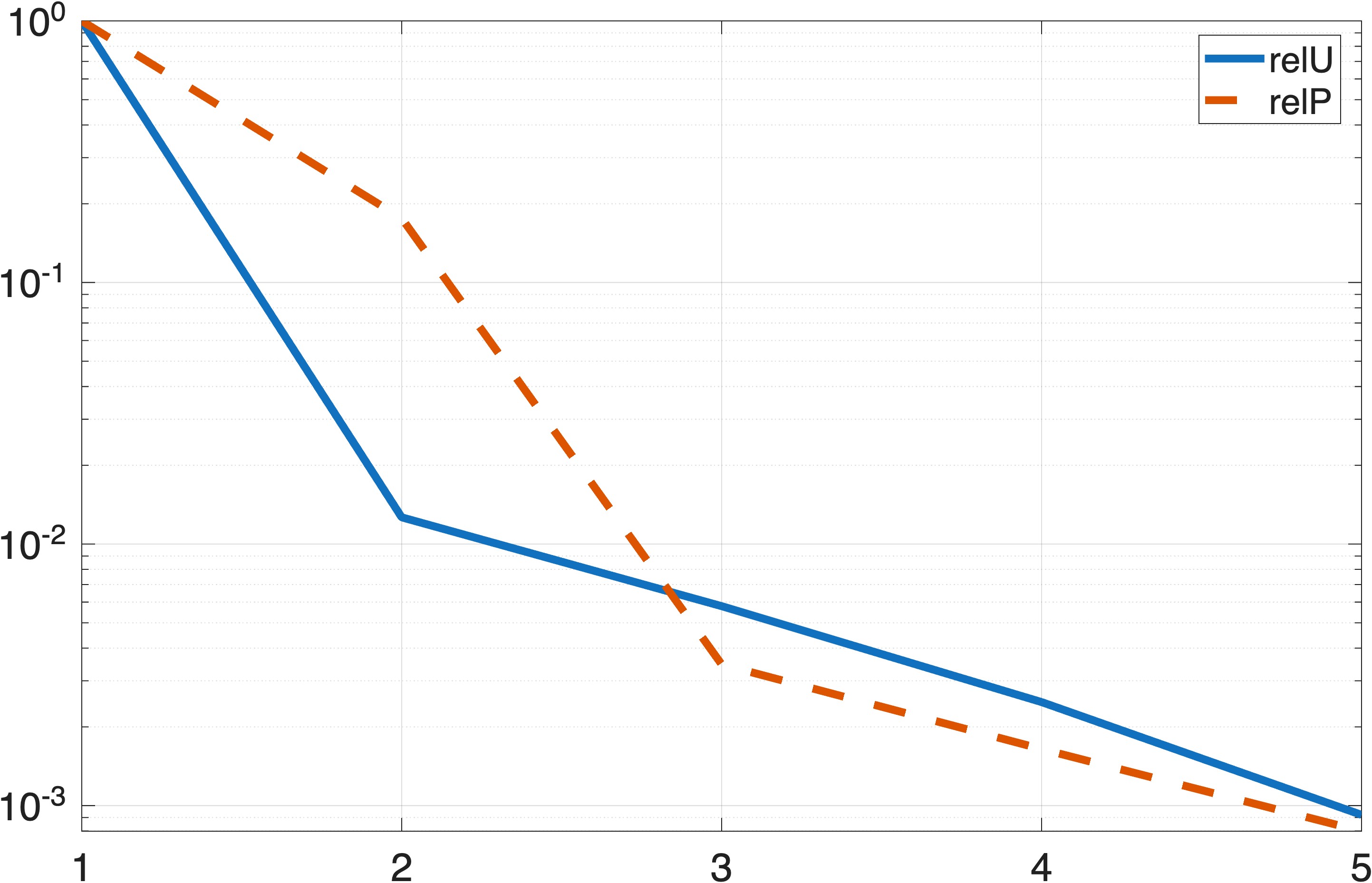}
}
\quad
\subfloat[$\operatorname{ResU}^{(k)}$ and $\operatorname{ResP}^{(k)}$\label{fig:case3_res_hist}]{
    \includegraphics[width=0.42\textwidth]{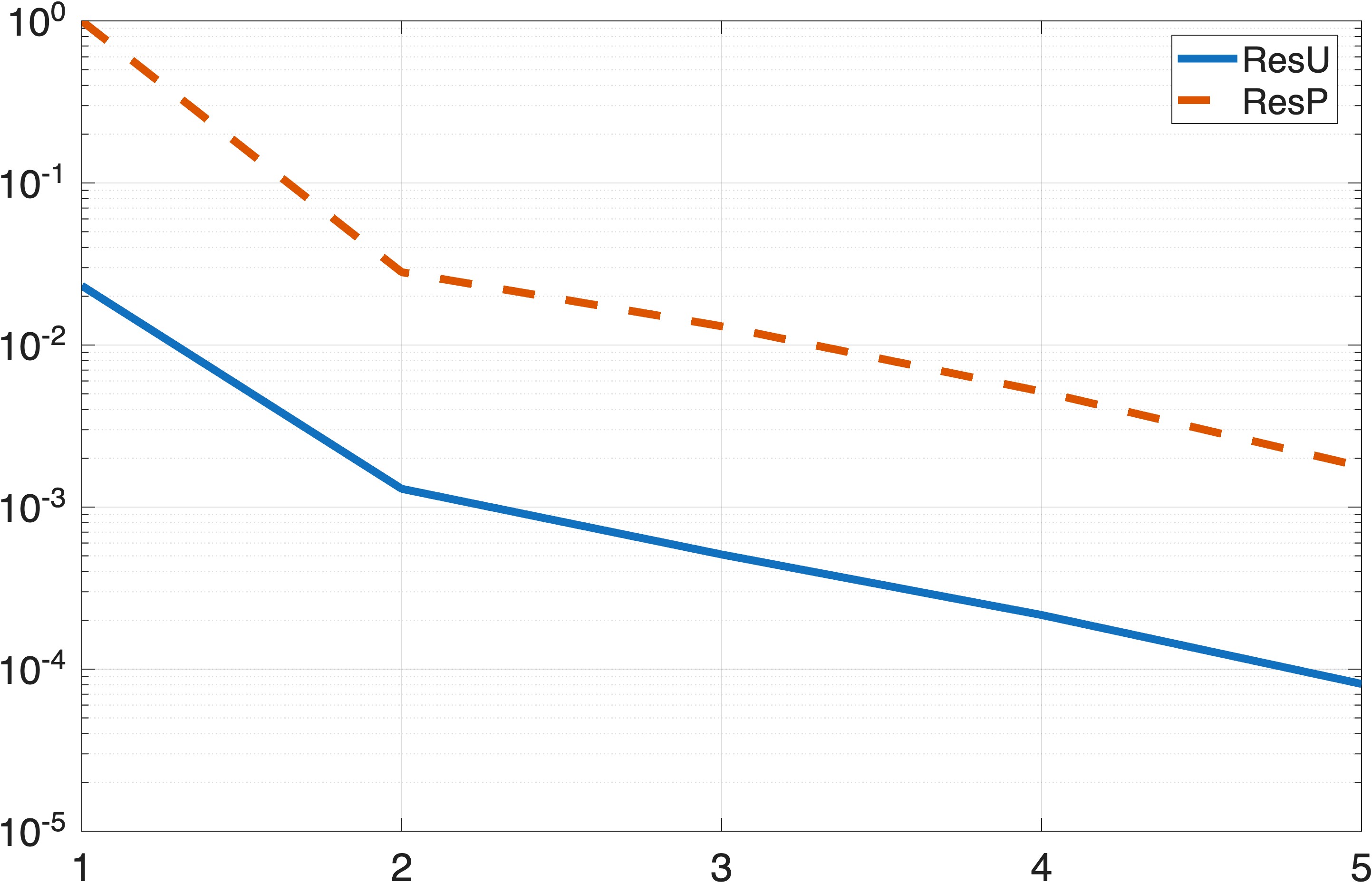}
}

\caption{Test 3. Convergence history of the Picard iteration. The left image shows the relative changes $\operatorname{relU}^{(k)}$ and $\operatorname{relP}^{(k)}$, while the right image shows the relative residuals $\operatorname{ResU}^{(k)}$ and $\operatorname{ResP}^{(k)}$.}
\label{fig:case3_hist}
\end{figure}

The numerical results for Test~3 remain satisfactory despite the increased complexity of this example. Figure~\ref{fig:case3_all} shows that the proposed method successfully reconstructs the main structures of the true initial velocity and pressure, including the locations, signs, and overall shapes of the dominant features. In particular, the reconstructed solution captures well the two separated localized structures in the velocity field as well as the main features of the pressure distribution. The corresponding pointwise relative error plots indicate that the largest discrepancies are concentrated near the regions of strongest variation, while the errors remain moderate in most parts of the domain. Using the relative $L^2$ error $\|q^{\rm rec}-q^{\rm true}\|_{L^2(\Omega)}/\|q^{\rm true}\|_{L^2(\Omega)}\times 100\%$, the reconstruction errors are $35.98964\%$ for $u_{0,1}$, $31.06575\%$ for $u_{0,2}$, and $42.79563\%$ for $p_0$. Although these errors are larger than those in Tests~1 and~2, the reconstructions still recover the essential qualitative features of the exact solution. In addition, Figure~\ref{fig:case3_hist} shows a stable convergence behavior of the Picard iteration: both the relative changes of the iterates and the relative residuals decrease steadily as the iteration proceeds, which indicates that the iteration stabilizes reliably and that the computed solution satisfies the reduced system with good accuracy.

\section{Concluding remarks} \label{sec:conclusion}

In this paper, we study an inverse initial-data problem for the incompressible Navier--Stokes system. The goal was to reconstruct the initial velocity $\bu_0$ and the initial pressure $p_0$ from lateral boundary observations, without assuming that the time-independent body force $\bff$ is known a priori. To overcome this difficulty, we first eliminated $\bff$ by differentiating the Navier--Stokes system with respect to time. We then applied the Legendre polynomial--exponential time-dimensional reduction method to derive a finite-dimensional coupled elliptic system for the coefficient vectors $\bU^N$ and $P^N$. For this reduced system, we introduced a Carleman contraction framework and proved that the associated operator $\mathcal T_{\lambda,\epsilon}$ is contractive on a suitable admissible set when $\lambda$ is sufficiently large. As a consequence, the reduced system admits a unique fixed point, which provides a stable approximation of the desired coefficient vectors and, in turn, of the initial data.

We also proposed a practical numerical algorithm based on this contraction mapping principle and used it to reconstruct the initial velocity and pressure from synthetic boundary data. The numerical examples show that the proposed method recovers the main qualitative features of the true initial data with satisfactory accuracy, while the convergence histories confirm the stability of the Picard iteration and the accuracy with which the reduced system is satisfied. These results indicate that the combination of time-dimensional reduction and the Carleman--Picard method is a promising approach for inverse problems for nonlinear fluid models. Possible directions for future work include the treatment of partial boundary data, more detailed convergence and stability estimates for the fully discrete scheme, and extensions to more general fluid systems and three-dimensional geometries.

%\section*{Conflict of interest}
%The authors declare no conflict of interest.
%
%
%
\section*{Data availability statement}
The data that support the findings of this study are available from the corresponding author upon reasonable request.

%\section*{Ethics statement}
%This study did not involve human participants, human data, animal experiments, or clinical trials.
%

%\bibliographystyle{plain}
%\bibliography{../../../../mybib}
%%\bibliography{mybib}
\end{document}